\newcommand{\ord}{{\rm ord} }
\newcommand{\Z}{\mathbb{Z}}
\newcommand{\Gal}{\text{Gal}}
\newcommand{\Ker}{\mathrm{ker}}
\newcommand{\coker}{\hbox{ Coker }}
\newcommand{\F}{\mathbb{F}}
\newcommand{\K}{\mathbb{K}}
\renewcommand{\L}{\mathbb{L}}
\renewcommand{\H}{\mathbb{H}}
\newcommand{\M}{\mathbb{M}}
\newcommand{\V}{\mathbb{V}}
\newcommand{\Q}{\mathbb{Q}}
\renewcommand{\C}{\mathbb{C}}
\newcommand{\N}{\mathbb{N}}
\newcommand{\p}{\mathfrak{p}}
\newcommand{\f}{\mathfrak{f}}
\newcommand{\g}{\mathfrak{g}}
\renewcommand{\a}{\mathfrak{a}}
\newcommand{\q}{\mathfrak{q}}
\newcommand{\I}{\mathcal{I}}
\renewcommand{\O}{\mathcal{O}}
\newtheorem{defi}{Definition}[section]
\newtheorem{theorem}[defi]{Theorem}
\newtheorem{proposition}[defi]{Proposition}
\newtheorem{corollary}[defi]{Corollary}
\newtheorem{lemma}[defi]{Lemma}
\newtheorem{remark}[defi]{Remark}
\newtheorem{definition}[defi]{Definition}
\def\eps{\varepsilon}
\author{Katharina M\"{u}ller}\address[K. M\"uller]{Mathematisches Institut
 der Universit\"at G\"ottingen \newline \begin{small}Bunsenstra{\ss}e 3-5\\37073 G\"ottingen, Germany\end{small}}
\email{katharina.mueller@mathematik.uni-goettingen.de}
\date{}
\title[Main conjecture for $p=2$]{The main conjecture for imaginary quadratic fields for the split prime $p=2$}
\begin{document}
\maketitle
\begin{abstract}
Let $\K$ be an imaginary quadratic field such that $2$ splits into two primes $\p$ and $\overline{\p}$. Let $\K_{\infty}$ be the unique $\Z_2$-extension of $\K$ unramified outside $\p$. Let $\L$ be an arbitrary finite abelian extension of $\K$. Let $\L_{\infty}=\K_{\infty}\L$ and let $\M$ be the maximal $p$-abelian, $\p$-ramified extension of $\L_{\infty}$. We set $X=\Gal(\M/\L_{\infty})$. In this paper we prove the Iwasawa main conjecture for the module $X$.
\end{abstract}
Mathematics subject classification: 11R23,11G05.\\
key words: $\p$-adic $L$-functions, elliptic units, Iwasawa Main conjecture.
\section{Introduction}
Let $\K$ be an imaginary quadratic field in which $2$ splits into two distinct primes $\p$ and $\overline{\p}$. By class field theory there exists exactly one $\Z_2$-extension $\K_{\infty}/\K$ which is unramified outside $\p$. Let $\L=\K(\mathfrak{fp^m})$ for some $m$ and 
$\L_{\infty}=\K_{\infty}\L$. We define $\L_n$ as the unique subextensions such that $[\L_n:\L]=2^n$. We will denote the Euler system of elliptic units in $\L_n$ by $C_n$. 
 
Let $\f$ be coprime to $\p$ and $\K\subset \L'\subset \L$ be an abelian extension such that $\L$ is the smallest ray class field of the type $\K(\f\p^m)$ containg $\L'$. Analogous to $\L_{\infty}$ we let $\L'_{\infty}=\K_{\infty}\L'$ and  $\L'_n$ be the intermediate fields. Let $U_n$ be the local units congruent to $1$ in $\L'_n$ and $U_{\infty}=\lim_{\infty\leftarrow n} U_n$. We define the elliptic units in $\L'_n$ by $C_n(\L')=N_{\L_n/\L'_n}(C_n)$. Let $E_n$ be the units of $\L'_n$ 
and define $\overline{E}=\lim_{\infty\leftarrow n}\overline{E}_n$. We define further $\overline{C}=\lim_{\infty\leftarrow n}\overline{C}_n$, where the overline denotes in both cases the $\p$-adic closure of the groups $E_n$ and $C_n$, respectively (i.e. we embed the groups $C$ and $E$ in the local units and consider their topological closure). 
We denote by  $A_n$ the $2$-part of the class group of $\L'_n$ and define $A_{\infty}=\lim_{\infty\leftarrow n} A_n$. Let further $\Omega$ be the maximal $2$-abelian $\p$-ramified extension of $\L'_{\infty}$. We will use the notation $X:=\Gal(\Omega/\L'_{\infty})$. 

There is a natural decomposition $Gal(\L'_{\infty}/\K)\cong H\times \Gamma'$, where $H=\Gal(\L'_{\infty}/\K_{\infty})$ and $\Gamma'\cong \Gal(\K_{\infty}/\K)$. We will fix once and for all such a decomposition. Let $\chi$ be a character of $H$ and $M$ an arbitrary $\Lambda=\Z_2[[\Gamma'\times H]]$-module. Let $\Z_2(\chi)$ be the extension of $\Z_2$ generated by the values of $\chi$ and define $M_{\chi}=M\otimes_{\Z_2[H]}\Z_2(\chi)$. So $M_{\chi}$ is the largest quotient on which $H$ acts via $\chi$. The modules $M_{\chi}$ are $\Lambda_{\chi}\cong \Z_2(\chi)[[T]]$-modules, where $T=\gamma-1$ for a topological generator $\gamma$ of $\Gamma'$. It is easy to verify that $X$, $A_{\infty},\overline{E}$ and $\overline{C}$ are $\Lambda$-modules. The main aim of this paper is to understand theit structure in more detail, i.e. to prove the following main conjecture.
\begin{theorem}
\label{mainmain}
$\textup{Char}(A_{\infty,\chi})=\textup{Char}((\overline{E}/\overline{C})_{\chi})$ and $\textup{Char}(X_{\chi})=\textup{Char}((U_{\infty}/\overline{C})_{\chi})$.
\end{theorem}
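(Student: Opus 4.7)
The plan is to follow the classical Mazur-Wiles-Rubin strategy, adapted to the technical difficulties of the split prime $p=2$. Global class field theory applied to $\Omega/\L'_{\infty}$ yields the standard four-term exact sequence of $\Lambda$-modules
$$0 \longrightarrow \overline{E}/\overline{C} \longrightarrow U_{\infty}/\overline{C} \longrightarrow X \longrightarrow A_{\infty} \longrightarrow 0,$$
in which the left map is induced by the embedding of global into local units, the middle map is the local Artin reciprocity map at $\p$, and the right map is the quotient by the image of $U_{\infty}$ in $X$. Passing to $\chi$-components and invoking multiplicativity of characteristic ideals (on torsion $\Lambda_\chi$-modules, where the relevant Tor-obstructions are pseudo-null) yields
$$\textup{Char}(X_\chi)\cdot\textup{Char}((\overline{E}/\overline{C})_\chi) = \textup{Char}(A_{\infty,\chi})\cdot\textup{Char}((U_{\infty}/\overline{C})_\chi),$$
so the two equalities of Theorem \ref{mainmain} are equivalent and it suffices to prove just one, say $\textup{Char}(A_{\infty,\chi}) = \textup{Char}((\overline{E}/\overline{C})_\chi)$.

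For the divisibility $\textup{Char}(A_{\infty,\chi}) \mid \textup{Char}((\overline{E}/\overline{C})_\chi)$ I would use the Euler system of elliptic units $\{C_n\}$. Following Rubin's blueprint, one constructs Kolyvagin derivatives as products indexed by square-free integers whose prime factors $\ell$ split completely in $\L'_n$ and satisfy $\ell \equiv 1 \pmod{2^n}$. A Chebotarev density argument over the tower $\L'_{\infty}$ then produces enough auxiliary primes for these derivatives to annihilate $A_{\infty,\chi}$ modulo the characteristic ideal of $(\overline{E}/\overline{C})_\chi$, giving the desired divisibility. For the opposite divisibility I would invoke Katz's two-variable $p$-adic $L$-function restricted to $\K_{\infty}$, combined with Coleman's power series and the Coates-Wiles logarithm, to identify $\textup{Char}((U_{\infty}/\overline{C})_\chi)$ with the ideal generated by the corresponding $\chi$-component of the $p$-adic $L$-function; a descent comparison with the analytic class number formula at finite levels then pins down the remaining divisibility, forcing equality.

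The principal obstacle is the Euler system derivative construction at $p=2$. Unlike in the odd-$p$ case, one cannot avoid non-semisimplicity of $\Z_2[H]$ when $\chi$ has $2$-power order, the cohomological arguments acquire parasitic $2$-torsion that has to be controlled throughout, and the Chebotarev input must be re-examined because the $2$-adic cyclotomic character may fail to be linearly independent of $\chi$ over the relevant tower. Verifying linear disjointness of the auxiliary Kummer extensions over $\K_{\infty}$ and carefully tracking the $\pm 1$-ambiguities in the local reciprocity maps is expected to occupy the bulk of the technical work.
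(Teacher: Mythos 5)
Your plan has the right skeleton --- the four-term exact sequence, the Euler-system divisibility $\textup{Char}(A_{\infty,\chi}) \mid \textup{Char}((\overline{E}/\overline{C})_\chi)$, and the Coleman power series / $p$-adic $L$-function argument for $U_\infty/\overline{C}$ --- and this is indeed the strategy of the paper. But the claim that ``the relevant Tor-obstructions are pseudo-null'' when passing the exact sequence to $\chi$-components is precisely what needs to be proved at $p=2$, not something that may be asserted. The functor $M \mapsto M_\chi$ is only right exact, and when $|H|$ is even the failure of exactness is not automatically pseudo-null. The paper's Theorem \ref{main} handles this by comparing the coinvariant-type $\chi$-component $M_\chi$ with the invariant-type $\chi$-component $M^\chi$; these differ by modules annihilated by $|H|$, a power of $2$, and the comparison only yields the equality of characteristic ideals because all the ideals in play are already known to be coprime to $2$. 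That coprimality rests on the vanishing of the $\mu$-invariant of $X$ (Theorem \ref{mu1} and Corollary \ref{mu_2}), an input your proposal never invokes but which is used at several crucial junctures --- including the reduction of the general $\L'$ to $\L = \K(\f\p^2)$ (Lemma \ref{reduction}), which your proposal also omits and which is needed for the explicit measure and Euler-system constructions.

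A second divergence: you propose to establish the reverse divisibility $\textup{Char}((\overline{E}/\overline{C})_\chi) \mid \textup{Char}(A_{\infty,\chi})$ character by character via a ``descent comparison with the analytic class number formula.'' The paper does not do this. It establishes the reverse inclusion only at the level of total characteristic ideals: it matches the $\mu$- and $\lambda$-invariants of $\textup{Char}(X)$ with those of $F = \prod_\chi F(w,\chi)$ via Corollary \ref{shalit28}, Lemma \ref{shalit29} and Proposition \ref{shalit210}, deduces $\textup{Char}(X) = \textup{Char}(U_\infty/\overline{C})$ and hence $\textup{Char}(A_\infty) = \textup{Char}(\overline{E}/\overline{C})$ globally, and only then descends to individual $\chi$-components by combining the total equality with the Euler-system $\chi$-wise divisibility (Lemma \ref{main2}). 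The order of operations matters: the class number formula at finite levels naturally controls a product over all characters of $H$, not individual components, so the $\chi$-by-$\chi$ version of your proposed descent would require an additional refinement you have not identified.
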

Let $\widetilde{\Gamma}=\Gal(\L_{\infty}/\L_{\infty}\cap \K(\f\p^2))\cong \Z_p$.
If we consider the field $({\L'_{\infty}})^{\widetilde{\Gamma}}$ we obtain an abelian extension of $\K$ contained in $\K(\f\p^2)$. As the projective limit does not depend on the finite level we start with we can without loss of generality assume that $\L'\subset \K(\f\p^2)$ for a suitable ideal $\f$ being coprime to $\p$. 
To prove the main result we will further use the following useful reduction step: Let $\f'$ be a principal ideal coprime to $\p$ in $\K$ such that $\omega_{\f'}=1$, where $\omega_{\f'}$ denotes the number of roots of unity of $\K$ congruent to $1$ mod $\f'$.
\begin{lemma}
\label{reduction}
If  Theorem \ref{mainmain} holds for $\K(\f'\p^{\infty}):=\cup _{n\in \N}\K(\f'\p^n)$, then it holds for every $\L'_{\infty}$.
\end{lemma}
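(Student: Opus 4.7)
The strategy is to embed $\L'_{\infty}$ into a larger field of the form $\L''_{\infty}:=\K(\f'\p^{\infty})$ satisfying the hypotheses of the reduction, and then to transfer the main conjecture down character-by-character.

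First I would construct the ideal $\f'$. Starting from $\f$ with $\L'\subset \K(\f\p^2)$, I enlarge $\f$ by suitable rational primes so that the new ideal $\f'$ is coprime to $\p$, principal in $\K$ (multiplying by a prime in the inverse ideal class of $\f$), and satisfies $\omega_{\f'}=1$ (a mild congruence condition, automatic once $\f'$ has a prime factor of sufficiently large norm, since only finitely many primes divide $\zeta-1$ for a nontrivial unit $\zeta$ of $\K$). Since enlarging the conductor only enlarges the ray class field, $\L'\subset \K(\f'\p^m)\subset \L''_{\infty}$. Write $\widetilde{H}=\Gal(\L''_{\infty}/\K_{\infty})$, $H=\Gal(\L'_{\infty}/\K_{\infty})$, and let $\pi\colon \widetilde{H}\twoheadrightarrow H$ denote restriction, with kernel $\Delta$.

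Next I would carry out the descent. For each character $\chi$ of $H$, inflate to $\widetilde{\chi}:=\chi\circ \pi$, a character of $\widetilde{H}$ trivial on $\Delta$. Since $\Z_2(\widetilde{\chi})=\Z_2(\chi)$, the coefficient rings $\Lambda_{\widetilde{\chi}}$ and $\Lambda_{\chi}$ coincide. The crucial step is to show that the canonical norm maps induce isomorphisms
\[
A_{\infty}(\L''_{\infty})_{\widetilde{\chi}}\cong A_{\infty}(\L'_{\infty})_{\chi},\quad X(\L''_{\infty})_{\widetilde{\chi}}\cong X(\L'_{\infty})_{\chi},
\]
and analogously for $U_{\infty}$, $\overline{E}$ and $\overline{C}$. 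Norm compatibility for the elliptic units is built into their very definition $C_n(\L')=N_{\L_n/\L'_n}(C_n)$; for $A_{\infty}$ and $X$ it follows from class field theory applied to the $2$-extension $\L''_{\infty}/\L'_{\infty}$, whose only ramification is at $\p$; for $\overline{E}$ and $U_{\infty}$ the point is that $\widetilde{\chi}$ kills $1-\delta$ for every $\delta\in \Delta$, so norm and trace agree on the $\widetilde{\chi}$-component. Once these identifications are in place, equal characteristic ideals transfer from the $\widetilde{\chi}$-side to the $\chi$-side, and Theorem \ref{mainmain} for $\L''_{\infty}$ yields Theorem \ref{mainmain} for $\L'_{\infty}$ one character at a time.

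The principal obstacle is the $p=2$ unit descent. At the prime $2$, roots of unity and the $2$-primary part of $\Delta$ can contribute genuine kernels and cokernels to the norm map, and it is precisely the hypothesis $\omega_{\f'}=1$ that eliminates the root-of-unity contribution, allowing the isomorphisms above to hold on the nose rather than only up to torsion. A secondary subtlety is that for $A_{\infty}$ and $\overline{E}/\overline{C}$ one may initially obtain only an exact sequence with a finite kernel/cokernel; since characteristic ideals over $\Lambda_{\chi}$ are insensitive to pseudo-null submodules and quotients, this does not affect the conclusion, but verifying that the error is indeed pseudo-null requires some care with the genus-theoretic input, again made tractable by the choice of $\f'$.
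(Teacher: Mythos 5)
Your proposal follows the same broad outline as the paper — inflate $\chi$ to the larger field, compare the five modules on the two sides via norm maps, and transfer equality of characteristic ideals down — but it misses the single ingredient on which the paper's proof actually turns. You attribute the control of the kernel and cokernel of the norm map to the condition $\omega_{\f'}=1$ together with a vague "genus-theoretic input". In fact $\omega_{\f'}=1$ plays no role here; it is a normalization used later in the construction of the $\p$-adic measures in Section 2. The mechanism the paper uses is the vanishing of the $\mu$-invariant: by Theorem \ref{mu1} and Corollary \ref{mu_2}, $\textup{Char}(X)$ and $\textup{Char}(A_{\infty})$ over $\K(\f'\p^{\infty})$ are coprime to $2$, and feeding this into the already-assumed Theorem \ref{mainmain} for $\K(\f'\p^{\infty})$ forces $\textup{Char}(U_{\infty}/\overline{C})$ and $\textup{Char}(\overline{E}/\overline{C})$ to be coprime to $2$ as well. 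Since the error between $\Delta$-invariants, $\Delta$-norm image, and the downstairs module $M(\L'_{\infty})$ is controlled by $\Delta$-cohomology which is annihilated by a power of $2$ (where $\Delta=\Gal(\K(\f'\p^{\infty})/\L'_{\infty})$), the coprimality to $2$ is precisely what makes these errors finite, hence pseudo-null, hence invisible to characteristic ideals.

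Without this input your argument does not close: you explicitly acknowledge that "the $2$-primary part of $\Delta$ can contribute genuine kernels and cokernels", but you offer no mechanism to dispose of them, and the claim that the relevant isomorphisms "hold on the nose" after imposing $\omega_{\f'}=1$ is too strong and unjustified. You should replace the last paragraph of your proposal with the observation that the hypothesis of the lemma (Theorem \ref{mainmain} for $\K(\f'\p^{\infty})$) together with Theorem \ref{mu1} and Corollary \ref{mu_2} yields $\mu=0$ for all four modules over the large field, and that this is exactly the coprimality-to-$2$ statement needed to render the $\Delta$-cohomological error terms pseudo-null. (A smaller point: the lemma is conditional, so you need not construct $\f'$; it is supplied along with the hypothesis, as the paper notes immediately before the statement.)
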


Note that $\textup{Char}((U_{\infty}/\overline{C})_{\chi})$ can be seen as the Iwasawa-function $F(w,\chi)$ associated to the $\p$-adic $L$-function $L_{\p}(s,\chi)$ (compare with Corollary \ref{cor:lfunction}). So we could reformulate the second statement of Theorem \ref{mainmain} for $\L=\K(\f\p^2)$ as 
\begin{theorem}
\[\textup{Char}(X)_{\chi}=F(w,\chi^{-1}).\]
\end{theorem}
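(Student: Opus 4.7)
The plan is to deduce this reformulation as a direct consequence of Theorem~\ref{mainmain} together with Corollary~\ref{cor:lfunction}, which carries out the explicit identification of $\textup{Char}((U_\infty/\overline{C})_\chi)$ with the Iwasawa function $F(w,\chi)$ attached to $L_\p(s,\chi)$. First I would apply Lemma~\ref{reduction} to reduce to the case $\L'\subset\K(\f'\p^2)$ with $\omega_{\f'}=1$, so that no root-of-unity obstruction appears. The second equality in Theorem~\ref{mainmain} then yields
\[
\textup{Char}(X_\chi) \;=\; \textup{Char}((U_\infty/\overline{C})_\chi),
\]
and the task reduces to identifying the right-hand side with $F(w,\chi^{-1})$.

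The mechanism I would use for this last identification is the Coates--Wiles logarithmic derivative (equivalently, the Coleman map) applied to the norm-compatible system of elliptic units $\{C_n\}$: this produces a $\Lambda$-linear homomorphism $U_\infty\to\Lambda$ whose image on $\overline{C}$ is, up to a unit, the two-variable $\p$-adic $L$-function of Katz--de~Shalit. Projecting to the $\chi$-isotypic component and restricting to the cyclotomic variable then recovers $F(w,\chi^{-1})$; the character inversion is the standard contragredient that appears when one translates a $\Lambda$-action on local units into the interpolation formula for an $L$-function via the Tate pairing between $U_\infty/\overline{C}$ and $X$.

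The principal obstacle sits in Theorem~\ref{mainmain} itself rather than in this reformulation. In the split $p=2$ setting Rubin's Euler-system argument, the $\Lambda$-module analysis of $U_\infty$ and $\overline{E}$, and the analytic class number formula all have to be adapted to the prime $2$ --- the classical odd-prime computations use $p\geq 3$ in several places, and this is the substantive new content of the paper. For the reformulated theorem in isolation, the only delicate point is to pin down compatible normalisations of the Coleman map, the characteristic ideal and the Katz measure so that the $\chi\mapsto\chi^{-1}$ twist is produced exactly as stated, with no spurious unit factor. Once those normalisations are fixed, concatenating Theorem~\ref{mainmain} with Corollary~\ref{cor:lfunction} gives the equality directly.
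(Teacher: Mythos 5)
Your proposal matches the paper's own (very short) argument: the theorem is stated as a reformulation, proved by combining the second equality of Theorem~\ref{mainmain} with Corollary~\ref{cor:lfunction}, whose content is exactly the Coleman-map identification of $\textup{Char}((U_\infty/\overline{C})_\chi)$ with the Iwasawa function of the $\p$-adic $L$-function. Your remark about the $\chi\mapsto\chi^{-1}$ twist being a normalisation matter is also on point --- note that Corollary~\ref{cor:lfunction} as stated writes $F(w,\chi)$, but its own proof produces the measure for $L_\p(s,\chi^{-1})$, consistent with the $F(w,\chi^{-1})$ appearing in the theorem you are proving.
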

Theorem \ref{mainmain} was addressed before by Rubin in \cite{Rubin2} and \cite{Rubin} for $p\ge 3$ and $[\L':\K]$ coprime to $p$. Bley proved the conjecture in \cite{BL} for $p\ge 3$ and general ray class fields $\L'$ under the assumption that the class number of $\K$ is coprime to $p$. Oukhaba proved in \cite{OK} a divisor relation between $(\overline{E}/\overline{C})_{\chi}$ and $A_{\infty,\chi}$ under the assumption that the class number is coprime to $p$ and Vigui\'{e} proved in \cite{vg2} (preprint) such a relation including the case $p=2$ and even class number. Together with the fact that the $\mu$-invariant of $X$ vanishes this implies that the characteristic ideal of $A_{\infty,\chi}$ divides the characteristc ideal of $(\overline{E}/\overline{C})_{\chi}$. To underline the power of the Euler system of elliptic units we will reprove this relation using the fact that a certain $\mu$-invariant vanishes.

The most recent work on this problem is due to Kezuka \cite{Ke2} for $\K=\Q(\sqrt{-q})$ where $q$ is a prime congruent to $7$ modulo $8$. She proves the full main conjecture, including the definition of the pseudo-measure necessary for the definition of the $p$-adic $L$-function, in the case $\L'=\H$ the Hilbert class field of $\K$ and for all primes $p$ such that $p$ is split in $\K$, coprime to $q$ and to the class number of $\K$. Note that in Kezuka's case the definition of $\K$ ensures that $\K$ has odd class number - so here proof includes the prime $p=2$. In this article we drop the assumption that the class number has to be odd and allow $[\L:\K]$ to be even, i.e. we give a complete proof of the Main conjecture as stated in Theorem \ref{mainmain} for $p=2$ and any finite abelian extension $\L/\K$.

Our proof will follow closely the methods developed by Rubin and generalized by Bley and Kezuka. We will first construct a suitable measure on the group $\Gal(\L_{\infty}/\K)$ and use it to define a $\p$-adic $L$-function. This part of the paper is a summary of section 2 of \cite{mu}. Using properties of the Euler system of elliptic units developed by Rubin and Tchebotarev's Theorem we will reprove that $\textup{Char}(A_{\infty,\chi})$ divides $\textup{Char}(\overline{E}/\overline{C}_{\chi})$ (see also \cite{vg2} for similar results). In section 4 we will finish the proof by showing that they are generated by polynomials of the same degree and hence are equal - again using our $p$-adic $L$-functions. 

An analogue of the relation between the galois groups $\Gamma'$ and $\Gal(\L_{\infty}/\K)$ explained in section 3.2 holds for $p\ge 3$ as well. Thus, all results of section 3.2 can be proved for general $p$ and $\L$ as well. In fact most of them are in \cite{BL}. Thus, the proof given here can also be used to prove the main conjecture for general ray class fields $\L$ and any prime $p$ without the assumption that the class number of $\K$ has to be coprime to $p$. It is not stated here for the general case as it is given in \cite{BL} up to the slight modification in section 3.2 and to avoid technical case distinctions for example in section 3.1, where the statements for $p\ge 3$ and $p=2$ are actually different.

\section{$p$-adic measures}
Before we start defining the $\p$-adic measures we will need later we prove Lemma \ref{reduction}.
\begin{proof}[Proof of Lemma \ref{reduction}]
Let $M\in \{A_{\infty}, U_{\infty}/\overline{C}, \overline{E}/\overline{C},X\}$. We will write $M(\mathbb{J})$ when we want to make the field explicite. Let $\chi$ be a character of $\Gal(\L'_{\infty}/\K_{\infty})$. By inflation $\chi$ is also a character of $\Gal(\K(\f'\p^{\infty})/\K_{\infty})$. In particular, it is trivial on $\Gal(\K(\f'\p^{\infty})/\L'_{\infty})$. As $\f'$ is coprime to $\p$ and  none of the characteristic ideals is divisible by $2$ (this follows from Theorem \ref{mainmain} for $\K(\f'\p^{\infty})$ and the fact that $\textup{Char}(X)$ and $\textup{Char}(A_{\infty})$ are not divisible by $2$ as shown in Theorem \ref{mu1} and Corollary \ref{mu_2}) the $\Gal(\K(\f'\p^{\infty})/\L'_{\infty})$-invariant parts of $M(\K(\f'\p^{\infty}))$ are pseudoisomorphic to the norm $N_{\K(\f'\p^{\infty})/\L'_{\infty}}M(\K(\f'\p^{\infty}))$ which is pseudoisomorphic to $M(\L'_{\infty})$. Thus, we obtain $\textup{Char}(M(\L'_{\infty})_{\chi})=\textup{Char}(M(\L_{\infty})_{\chi})$.
\end{proof}
For the rest of the paper we will only consider the case $\L=\K(\f\p^2)$ for $\f$ being coprime to $\p$, principal and such that $\omega_{\f}=1$. Define $\F_n=\K(\f\p^{n})$ and note that $\L_n=\F_{n+2}$. We will use the notation $\F_0=\F=\K(\f)$. To define our elliptic units we will use the following exposition from \cite{mu}. 

\begin{lemma}\label{suitable curve}\cite[Lemma 2]{mu} There exists an elliptic curve $E/\F$ which satisfies the following properties.
\begin{itemize}
\item[a)] $E$ has CM by the ring of integers $\mathcal{O}_{\K}$ of $\K$;
\item[b)] $\F\left(E_{tors}\right)$ is an abelian extension of $\K$;
\item[c)] $E$ has good reduction at primes in $\F$ lying above $\p$.
\end{itemize}
\end{lemma}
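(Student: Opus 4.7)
The plan is to build $E$ by twisting a CM elliptic curve defined over the Hilbert class field $\H = \K(1)$, which sits inside $\F = \K(\f)$. First, the classical theory of complex multiplication furnishes an elliptic curve $E_0/\H$ with CM by $\mathcal{O}_\K$: the $j$-invariant $j(\mathcal{O}_\K)$ lies in $\H$, so one may choose a Weierstrass model over $\H$ realizing it. Deuring's theorem attaches to $E_0$ a Grossencharakter $\psi_0$ of $\H$ of infinity type $(1,0)$, and the base change $E_0 \times_\H \F$ is a CM elliptic curve over $\F$ with Grossencharakter $\psi_0 \circ N_{\F/\H}$. This immediately yields (a), and (c) will follow after choosing a minimal model over $\F$, since $\f$ is coprime to $\p$ and one can arrange the Grossencharakter of $E$ to have conductor dividing $\f$.

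The substantive condition is (b). By the main theorem of complex multiplication, $\F(E_{tors})/\K$ is abelian if and only if the Grossencharakter $\psi$ of $E$ factors through the norm $N_{\F/\K}\colon \mathbb{A}_{\F}^{\times} \to \mathbb{A}_{\K}^{\times}$. In general $\psi_0 \circ N_{\F/\H}$ does not have this property, and the remedy is to twist $E_0 \times_\H \F$ by a suitable finite-order Hecke character $\chi$ of $\F$ so that the new Grossencharakter $(\psi_0 \circ N_{\F/\H}) \cdot \chi$ descends through $N_{\F/\K}$. Existence of a suitable $\chi$ reduces to a cocycle-vanishing problem on $\Gal(\F/\K)$ with coefficients in a group of finite-order Hecke characters of $\F$, obtained by comparing $\psi_0^{\sigma}$ with $\psi_0$ for $\sigma \in \Gal(\F/\K)$.

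The main obstacle is precisely this cocycle vanishing, and here the hypothesis $\omega_\f = 1$ plays a decisive role: it makes the reduction map $\mathcal{O}_\K^{\times} \to (\mathcal{O}_\K/\f)^{\times}$ injective, which removes the ambiguity in lifting ray class characters modulo $\f$ to Grossencharaktere and permits $\chi$ to be chosen with conductor dividing $\f$. Since $\f$ is coprime to $\p$, the resulting $\psi$ is then unramified at all primes of $\F$ above $\p$, so the twisted curve $E$ inherits good reduction there, confirming (c) along with (a) and (b). The delicate point, carried out in \cite{mu}, is the explicit identification of the relevant cocycle in terms of the $\Gal(\F/\K)$-action on conductor-$\f$ Grossencharaktere, and the exhibition of $\chi$ directly as a character of the ray class group modulo $\f$.
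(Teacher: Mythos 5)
The paper gives no proof here—it cites \cite[Lemma 2]{mu}, where the result is established by the classical construction of Shimura/de Shalit/Gross (directly building the curve from the lattice data of a Grossencharacter $\phi$ of $\K$ of infinity type $(1,0)$ and conductor $\f$). Your twisting strategy is a legitimate alternative route, and the broad outline is sound, but there is a genuine gap at its core: you want to replace $\psi_0$ by $\psi_0\cdot\chi$ for a ``finite-order Hecke character'' $\chi$, yet twisting an elliptic curve only makes sense for characters valued in $\mathrm{Aut}_{\K}(E_0)=\mathcal{O}_{\K}^{\times}$. You never observe that the candidate twist $\chi=(\phi\circ N_{\F/\K})\cdot\psi_0^{-1}$ lands in $\mathcal{O}_{\K}^{\times}=\{\pm 1\}$, and without that the proposed twist is not a well-defined operation on curves. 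The verification is not hard but is exactly where the standing hypotheses enter: since $2$ splits in $\K$, the field $\K$ is neither $\Q(i)$ nor $\Q(\sqrt{-3})$, so $\mathcal{O}_{\K}^{\times}=\{\pm 1\}$; and since $\psi_0$ and $\phi\circ N_{\F/\K}$ both take values in $\K^{\times}$ on finite ideles and have identical infinity type, their ratio is a finite-order character valued in $\K^{\times}$, hence in $\mu(\K)=\{\pm 1\}$. Once that is in place, the twist exists and your conclusion of (b) and (c) follows.

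Two smaller points. First, the ``cocycle-vanishing on $\Gal(\F/\K)$'' framing is an unnecessary detour: $\omega_{\f}=1$ gives the explicit Grossencharacter $\phi$ of $\K$ (the map $(a)\mapsto a$ for $a\equiv 1 \bmod\f$ is well-defined precisely because the only unit $\equiv 1 \bmod\f$ is $1$), and then $\chi$ is determined outright, so there is nothing to vanish. Second, there is no need to pass through $\H$: any model over $\F$ with $j$-invariant $j(\mathcal{O}_{\K})$ would serve as $E_0$, since $\H\subset\F$. Neither of these is an error, but both lengthen the argument relative to the direct construction the reference uses.
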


Let $\phi$ be a Gr\"ossencharacter of $\K$ of infinity type $(1,0)$ and conductor $\f$. Let $E/\F$ be the elliptic curve defined in Lemma \ref{suitable curve}. Then we can assume that the Gr\"ossencharacter $\psi$ associated to $E$ satisfies
\[ \psi_{E/\F}=\phi \circ N_{\F/\K}.\] 

In the sequel we have to describe the points on our elliptic curve explicitely. Therefore, we fix once and for all a minimal Weierstrass model of $E$.\begin{eqnarray}\label{wmodel}
y^2+a_1xy +a_3y=x^3+a_2 x^2 +a_4 x+a_6.
\end{eqnarray}
Using the fact that $E$ has good reduction at all primes above $\p$ we can assume that the discriminant $\Delta(E)$ is coprime to $\p$. Choosing a suitable embedding $\F\hookrightarrow \C$ we can further assume that there is a complex number $\Omega_{\infty}$ such that the period lattice $\mathcal{L}$ of $E$ satisfies $\mathcal{L}=\Omega_{\infty}\mathcal{O}(\K)$ (see \cite{CoGo83} for details).

\smallskip Let $\sigma\in \Gal(\F/\K)$ be an arbitrary element. Then $\sigma$ acts on the coefficients of the model \eqref{wmodel} and defines another curve $E^{\sigma}$ over $\F$. The curve $E^{\sigma}$ satisfies point a)-c) of Lemma \ref{suitable curve}. From point b) we obtain that the two curves $E$ and $E^{\sigma}$ have the same Gr\"ossencharacter. In particular, all the $E^{\sigma}$ are isogenous to each other.

\smallskip Let $\mathfrak{a}$ be an ideal in $\K$ coprime to $\f\p$ and $a\in\mathfrak{a}$ an arbitrary element. By point a) of Lemma \ref{suitable curve} we see that the multiplication by $a$ is a well defined endomorphism of $E$ and we can consider its kernel $E_a$.
We define further $E_{\mathfrak{a}}=\cap_{a\in\mathfrak{a}}E_a$. Let $\sigma_{\mathfrak{a}}$ be the Artin symbol of $\mathfrak{a}$ in $\Gal(\F/\K)$. Then the main theorem of complex conjugation allows us to define an isogeny $\eta_{\sigma}(\a) : E^{\sigma} \to E^{\sigma\sigma_{\a}}$ over $\F$, of degree $N(\a)$. This isogeny has the property that for every $\mathfrak{g}$ coprime to $\a$ and any $u\in E_{\g}$ we have
\[ \sigma_{\a}(u)=\eta_{\sigma}(\a)(u).\] Moreover, the kernel of this isogeny is precisely the subgroup $E^{\sigma}_{\a}$ (see \cite[proof of Lemma 4]{CoGo83} ). Whenever $\sigma$ is trivial we will drop the subscript $\sigma$ and write $\eta(\mathfrak{a})$ instead of $\eta_{id}(\mathfrak{a})$.

\smallskip Let $\omega$ be the Neron-differential associated to the model \eqref{wmodel} of $E$ then
\[ \omega= \frac{dx}{2y+a_1x+a_3}.\]
Due to \cite[p. 341]{CoGo83}, there exists a unique $\Lambda(\mathfrak{a}) \in \F^{\ast}$ such that
\begin{eqnarray}
\omega^{\sigma_{\mathfrak{a}}} \circ \eta(\mathfrak{a})=\Lambda(\mathfrak{a})\omega.
\end{eqnarray}
This defines a map $\Lambda\colon\{\textup{ideals coprime to }\f\}\to \F^{\times}$ satisfying the cocyle condition. Therefore it can be extended to a cocyle of all fractional idelas coprime to $\f$. This cocyle $\Lambda$ plays also an important role in determining the period lattice for the curve $E^{\sigma_{\a}}$. Let $\mathcal{L}_{\sigma_\mathfrak{a}}$ be the period lattice of $E^{\sigma_{\mathfrak{a}}}$. Then 
\begin{eqnarray}
\Lambda(\mathfrak{a})\Omega_{\infty}\mathfrak{a}^{-1}=\mathcal{L}_{\sigma_{\a}}.
\end{eqnarray}
(see \cite[p. 342]{CoGo83} for details).

\smallskip Recall that we choose an embedding $\overline{\Q}$ into $\C$ such that $\mathcal{L}=\Omega_{\infty}\mathcal{O}(\K)$. Choose a place $v$ above $\p$ induced by this embedding and let $\mathcal{I}_{\p}$ be the ring of intergers of the maximal unramified extension of $\F_v$. For ervery $\sigma\in \Gal(\F/\K)$ we denote by $\widehat{E^{\sigma,v}}$ the formal group given by the kernel of the reduction modulo $v$ of $E^{\sigma}/\F$. Note that the formal parameter of this group is $t_{\sigma}=-x_{\sigma}/y_{\sigma}$. If $\sigma$ is trivial we ommit the sub- and superskript $\sigma$.  With these notations we obtain:

\begin{lemma}\label{formal iso} \cite[Lemma 3]{mu} There exists an isomorphism $\beta^v$ between the formal multiplicative group $\widehat{\mathbf{G}}_m$ and the formal group $\widehat{E^{v}}$, which can be written as a power series $t=\beta^v(w) \in \mathcal{I}_{\p}[[w]]$ .
\end{lemma}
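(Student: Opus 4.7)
The plan is to identify $\widehat{E^v}$ as a one-dimensional formal group of height one and then invoke the Lubin--Tate classification theorem, which asserts that over the ring of integers of the completion of an unramified extension whose residue field is separably closed, any two one-dimensional formal groups of the same height are isomorphic. Since $\widehat{\mathbf{G}}_m$ is itself of height one, this yields the desired isomorphism $\beta^v$ as a power series with coefficients in $\mathcal{I}_{\p}$.

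The first step is to verify that $\widehat{E^v}$ has height one. Here I would use the CM structure together with the hypothesis that $2=\p\overline{\p}$ splits in $\K$. Complex multiplication by $\mathcal{O}_{\K}$ extends to an action on the formal group, and tensoring with $\Z_2$ gives $\mathcal{O}_{\K} \otimes_{\Z} \Z_2 \cong \mathcal{O}_{\K,\p} \times \mathcal{O}_{\K,\overline{\p}} \cong \Z_2 \times \Z_2$. Elements of $\overline{\p}$ are coprime to the residue characteristic at $v$ and therefore induce automorphisms of the formal group, while a uniformizer of $\p$ acts as an isogeny of degree $N(\p)=2$. Consequently $\widehat{E^v}$ is a formal $\mathcal{O}_{\K,\p}$-module of dimension one on which the uniformizer of $\mathcal{O}_{\K,\p} \cong \Z_2$ acts by an isogeny of degree $2$, which forces the height to be exactly one (equivalently, $E$ has ordinary reduction at $v$).

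With height one established, the isomorphism itself can be constructed via formal logarithms: both $\log_{\widehat{E^v}}(t)$ and $\log_{\widehat{\mathbf{G}}_m}(w)=\log(1+w)$ linearize their respective group laws over the fraction field of $\mathcal{I}_{\p}$, so the composite $\log_{\widehat{\mathbf{G}}_m}^{-1}\bigl(c\cdot\log_{\widehat{E^v}}(t)\bigr)$ is a formal-group isomorphism over that fraction field for every nonzero scalar $c$. The main obstacle is integrality: a priori these logarithms carry denominators, so the naive composite lies only in the fraction field and not in $\mathcal{I}_{\p}[[t]]$. One must choose $c\in\mathcal{I}_{\p}^{\times}$ so that the resulting power series has integral coefficients. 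This is precisely what the Lubin--Tate classification provides: because the residue field of $\mathcal{I}_{\p}$ is algebraically closed, an isomorphism between the two height-one formal groups already exists at the level of reductions, and Hensel's lemma lifts it inductively to an integral isomorphism defined over $\mathcal{I}_{\p}$, which is the required $\beta^v$.
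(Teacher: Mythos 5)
The paper does not prove this lemma itself — it is cited to \cite[Lemma 3]{mu}, which in turn rests on the standard Lubin--Tate argument of Lubin \cite{Lubin}, de Shalit \cite{Shalit}, and Coates--Goldstein \cite{CoGo83}. Your proposal reconstructs exactly that standard argument: ordinary reduction at $v$ (via the splitting $2 = \p\overline{\p}$ and the CM action factoring through $\mathcal{O}_{\K,\p}\cong\Z_2$) gives height one, and then a height-one formal group over the completion of the maximal unramified extension is unique up to isomorphism, hence isomorphic to $\widehat{\mathbf{G}}_m$. The essential structure of the argument is right.

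Two small corrections to your exposition are worth making. First, your statement of the Lubin--Tate classification is over-broad: it is \emph{not} true that over the completed ring of integers of the maximal unramified extension, any two one-dimensional formal groups of the same height are isomorphic — for height $h\geq 2$ the Lubin--Tate deformation space $\mathrm{Spf}\bigl(W(\overline{\F}_p)[[u_1,\dots,u_{h-1}]]\bigr)$ is positive-dimensional, so there are non-isomorphic lifts of the same formal group over $\overline{\F}_p$. What is true, and what you actually need, is the degenerate case $h=1$: the deformation space is a single point, so all height-one formal groups over $W(\overline{\F}_p)$ are isomorphic. Second, your final paragraph is internally inconsistent: you first propose composing formal logarithms and worry about denominators, and then conclude ``this is precisely what Lubin--Tate provides.'' Those are two distinct routes. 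The deformation-theoretic lifting argument (or, alternatively, Lubin's original inductive construction of the isomorphism) supersedes the logarithm approach; the logarithm computation neither requires nor supplies the integrality, so it is better to drop it rather than present it as if Lubin--Tate theory rescues it. With these adjustments the argument is correct and coincides with the one implicitly invoked by the paper's citation.
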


We fix once and for all such an isomorphism and denote the coefficient of $w$ in this power series by $\Omega_v$. The isogenies $\eta(\mathfrak{a})$ induce homomorphisms $\widehat{\eta(\mathfrak{a})}\colon \widehat{E^v}\to \widehat{E^{\sigma_{\mathfrak{a}},v}}$. As long as $\mathfrak{a}$ is coprime to $\f$ they are even isomorphisms. Define $\beta^v_{\a}=\widehat{\eta(\a)}\circ \beta^v$. If we denote the first coefficient of $\beta^v_{\a}$ by $\Omega_{\a,v
}$ we obtain from \cite[Lemma 6]{CoGo83} that $\Omega_{\a,v}=\Lambda(\a)\Omega_v$.

\smallskip These notations and definitions allow us to define our basic rational function: Let $\alpha$ be an integral element in $\K$ that is coprime to $6$ and not a unit. Let $P^{\sigma}$ be a generic point on $E^{\sigma}$ and denote its $x$- and $y$-coordinates by $x(P^{\sigma})$ and $y(P^{\sigma})$. Then we define
\[\xi_{\alpha,\sigma}(P^{\sigma})=c_{\sigma}(\alpha)\prod\limits_{S \in V_{\alpha,\sigma}} \left(x(P^{\sigma})-x(S)\right),\]
where $V_{\alpha,\sigma}$ is a set of representatives of the non-zero $\alpha$-division points on $E^{\sigma}$ modulo $\{\pm 1\}$ and $c_{\sigma}(\alpha)$ is a canonical 12th root in $\F$ of $\Delta(\alpha^{-1} \mathcal{L}_{\sigma})/\Delta(\mathcal{L}_{\sigma})^{N_{\K/\Q}(\alpha)}$ (here $\Delta$ stands for the Ramanujan's $\Delta$-function) .-see also \cite[Appendix, Proposition 1]{Co91} and \cite[Appendix, Theorem 8]{Co91}.
Recall that we assumed that $\f=(f)$ is principal. By the definition of $\mathcal{L}$ we see that $\rho=\Omega_{\infty}/f$ defines a primitive $\f$ division point on $E$. 

Using this we define the rational function
\[\xi_{\alpha,\sigma,Q}(P^{\sigma})=\xi_{\alpha,\sigma}(P^{\sigma}+\Lambda(\a)\rho),\]
where $\a$ is an integral ideal coprime to $\f$ such that $\left(\frac{\a}{\F/\K}\right)=\sigma$. We fix a set of ideals $\mathfrak{C}_0$ such that the Artin symbols of $\a\in\mathfrak{C}_0$ run through every element of $\Gal(\F/\K)$ exactly once and define \[ Y_{\alpha,\a}(P^{\sigma})=\frac{\xi_{\alpha,\sigma,Q}(P^{\sigma})^p}{\xi_{\alpha,\sigma\sigma_{\p},Q}(\eta_{\sigma}(\p)(P^{\sigma}))}.\]
We obtain the following result (\cite[Lemma 4]{mu} and \cite[equation 15]{mu}): \begin{lemma}\label{ymeasure} For an integral ideal $\a$ of $\O_{\K}$ coprime to $\f$, let $\sigma_{\a}$ denote the Artin symbol of $\a$ in $\Gal(\F/\K)$. Then the series $Y_{\alpha,\a}(t_{\sigma_{\a}})$ lies in $1+\mathfrak{m}_v[[t_{\sigma_{\a}}]]$ and the series $h_{\alpha,\a}(t_{\sigma_{a}}):=\frac{1}{2}\log(Y_{\alpha,\a}(t_{\sigma_{\a}}))$ has coefficients in $\O(\F_v)$. Further, the function $Y_{\alpha,\a}(P^{\sigma})$ satisfies the relation $\prod_{R\in E^{\sigma}_{\p}}Y_{\alpha,\a}(P^{\sigma}\oplus R)=0$.
\end{lemma}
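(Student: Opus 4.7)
The plan is to deduce all three assertions from two structural properties of the theta-like functions $\xi_{\alpha,\sigma,Q}$: a Frobenius-type congruence modulo $v$ that forces the ratio defining $Y_{\alpha,\a}$ to be $\equiv 1$, and the classical distribution (norm) identity for the isogeny $\eta_\sigma(\p)$, whose kernel is exactly $E^\sigma_\p$. Both ingredients are classical for elliptic theta functions on CM curves, and here they only need to be checked in the local/formal form needed at $p=2$.

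For the inclusion $Y_{\alpha,\a}(t_{\sigma_\a})\in 1+\mathfrak{m}_v[[t_{\sigma_\a}]]$, observe that the translate $\Lambda(\a)\rho$ is a primitive $\f$-division point on $E^{\sigma_\a}$ and $\f$ is coprime to $\p$, so neither numerator nor denominator of $Y_{\alpha,\a}$ acquires a zero or pole at the identity; each therefore expands as a power series in $t_{\sigma_\a}$ with unit constant term. Since $\eta_\sigma(\p)$ reduces modulo $v$ to the $p$-power Frobenius on the reduction, on formal parameters one has $\widehat{\eta_\sigma(\p)}(t_{\sigma_\a})\equiv t_{\sigma_\a}^{p}\pmod{\mathfrak{m}_v}$; combined with the fact that $\xi_{\alpha,\sigma\sigma_\p,Q}$ is obtained from $\xi_{\alpha,\sigma,Q}$ by applying $\sigma_\p$ to coefficients, and that $\sigma_\p$ reduces to the $p$-th power map on $\mathcal{I}_\p/\mathfrak{m}_v$, one gets
\[
\xi_{\alpha,\sigma\sigma_\p,Q}\bigl(\eta_\sigma(\p)(P^\sigma)\bigr)\equiv \xi_{\alpha,\sigma,Q}(P^\sigma)^{p}\pmod{\mathfrak{m}_v},
\]
so $Y_{\alpha,\a}\equiv 1\pmod{\mathfrak{m}_v}$.

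For the integrality of $h_{\alpha,\a}$, since $p=2$ the previous step already gives $Y_{\alpha,\a}=1+u$ with $u\in 2\,\mathcal{O}(\F_v)[[t_{\sigma_\a}]]$. Applying $\log(1+u)=-\sum_{n\ge 1}(-u)^n/n$, the $n$-th term has $2$-adic valuation at least $n-v_2(n)\ge 1$, so $\log(Y_{\alpha,\a})\in 2\,\mathcal{O}(\F_v)[[t_{\sigma_\a}]]$ and $h_{\alpha,\a}=\tfrac12\log(Y_{\alpha,\a})$ has coefficients in $\mathcal{O}(\F_v)$.

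For the last relation (which, since $Y_{\alpha,\a}$ is a unit series, must read $=1$ rather than $=0$; taking $\log$ this is equivalent to $\sum_{R}h_{\alpha,\a}(P^\sigma\oplus R)=0$), use that $E^\sigma_\p=\ker\eta_\sigma(\p)$, so $\eta_\sigma(\p)(P^\sigma\oplus R)=\eta_\sigma(\p)(P^\sigma)$ for $R\in E^\sigma_\p$; hence the denominator contributes $\xi_{\alpha,\sigma\sigma_\p,Q}(\eta_\sigma(\p)(P^\sigma))^{N(\p)}=\xi_{\alpha,\sigma\sigma_\p,Q}(\eta_\sigma(\p)(P^\sigma))^{p}$. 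The classical distribution relation for the functions $\xi$ along the $\p$-isogeny yields the matching numerator identity
\[
\prod_{R\in E^\sigma_\p}\xi_{\alpha,\sigma,Q}(P^\sigma\oplus R)=\xi_{\alpha,\sigma\sigma_\p,Q}\bigl(\eta_\sigma(\p)(P^\sigma)\bigr),
\]
and raising to the $p$-th power completes the cancellation. The step I expect to be the main obstacle is the Frobenius congruence: one must carefully track how $\sigma_\p$ acts on the coefficients of $\xi_{\alpha,\sigma,Q}$, how $\eta_\sigma(\p)$ reduces mod $v$ relative to the formal parameters $t_\sigma$ and $t_{\sigma\sigma_\p}$, and how the twist by the $\f$-division point $\Lambda(\a)\rho$ interacts with both, particularly because at $p=2$ one cannot afford to lose any power of $p$ in the comparison.
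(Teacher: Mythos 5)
The paper does not actually prove Lemma \ref{ymeasure}; it cites it directly from the companion article \cite{mu} (Lemma~4 and equation~(15) there), so there is no internal argument to compare against. That said, your blind reconstruction is the standard and essentially correct route, and it isolates the right three ingredients: the Frobenius congruence $\xi_{\alpha,\sigma\sigma_\p,Q}(\eta_\sigma(\p)(P^\sigma))\equiv\xi_{\alpha,\sigma,Q}(P^\sigma)^p\pmod{\mathfrak{m}_v}$, obtained from $\widehat{\eta_\sigma(\p)}(t)\equiv t^p$ together with the fact that $\sigma_\p$ acts as $p$-power Frobenius on residues; the elementary $2$-adic estimate $n-v_2(n)\ge 1$ giving $\tfrac12\log(1+u)\in\mathcal O(\F_v)[[t]]$ for $u\in 2\mathcal O(\F_v)[[t]]$ (this is exactly the place where $p=2$ must be handled with care, and your argument handles it correctly since $\F_v/\Q_2$ is unramified so $\mathfrak m_v=2\mathcal O(\F_v)$); and the distribution identity $\prod_{R\in E^\sigma_\p}\xi_{\alpha,\sigma}(P+R)=\xi_{\alpha,\sigma\sigma_\p}(\eta_\sigma(\p)P)$, which is precisely the identity the paper itself invokes inside the proof of Proposition~\ref{prop:norms}. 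You are also right that, given the first assertion, the final relation as printed ($=0$) cannot be correct for the unit power series $Y_{\alpha,\a}$; it should read $\prod_{R}Y_{\alpha,\a}(P^\sigma\oplus R)=1$, equivalently $\sum_R h_{\alpha,\a}(P^\sigma\oplus R)=0$, which is exactly the form needed to invoke Sinnott's Lemma~1.1 two sentences later.

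The one genuine gap is the one you flag yourself and then do not close: the compatibility of the translates. Both the Frobenius congruence and the cancellation in the product require that the shift used to define $\xi_{\alpha,\sigma\sigma_\p,Q}$ is the image $\eta_\sigma(\p)(\Lambda(\a)\rho)=\Lambda(\a\p)\rho$ of the shift used in $\xi_{\alpha,\sigma,Q}$, and that the representative in $\mathfrak{C}_0$ of the class $\sigma\sigma_\p$ induces the same shift. This is where the normalisation $\omega_\f=1$ (and the fact that $\f$ is principal) is doing real work to kill the ambiguity by roots of unity in the cocycle $\Lambda$, and where one must also confirm that the coefficients of the combined series actually land in $\O(\F_v)$ even though the individual $\xi$-expansions a priori live in a larger ring. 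Your proof asserts this compatibility rather than verifying it, so as a standalone argument it is correct in outline but not complete; these are exactly the details carried out in the cited reference.
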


\smallskip
There is a one to one correspondence between $\mathcal{I}_{\p}$-valued measures on $\Z_p^{\times}$ and the ring of power series $\mathcal{I}_{\p}[[T]]$ given by Mahler's Isomorphism
\[F_{\nu}(w)=\int_{\Z_2}(1+w)^xd\nu(x).\] 
For every $\a\in\mathfrak{C}_0$ we can consider the power series $\mathcal{B}_{\alpha,\a}(w)=h_{\alpha,\a}\left(\beta^v_{\a}(w)\right)$. Using Lemma \ref{ymeasure} we obtain that these series correspond to measures on $\Z_2$. We will denote these measures by $\nu_{\alpha,\a}$. Using \cite[Lemma 1.1]{sinnott} together with the second claim of Lemma \ref{ymeasure} we see that the measures $\nu_{\alpha,\a}$ coincide with their restriction to $\Z_2^{\times}$. 
As $\Gal(\L_{\infty}/\F)\cong \Z_2^{\times}$ we can see the measures $\nu_{\alpha,\a}$ as measures on $\Gal(\L_{\infty}/\F)$. If we extend them by zero outside $\Gal(\L_{\infty}/\F)$ we can even see them as measures on $\Gal(\L_{\infty}/\K)$. For $\a\in \mathfrak{C}_0$ let $\nu_{\alpha,\a}\circ\sigma_{\a}$ be the pushforward measure of $\nu_{\alpha,\a}$ on $\sigma_{\a}^{-1}\Gal(\L_{\infty}/\F)$. Then we can define 
\[ \nu_{\alpha}:=\sum\limits_{\a\in \mathfrak{C}_0} \nu_{\alpha,\a} \circ \sigma_{\a},\]
which is by the choice of $\mathfrak{C}_0$ a measure on $\Gal(\L_{\infty}/\K)$.
 The measures $\nu_{\alpha}$ have nice interpolation propertiees with respect to $L$-functions when it comes to integration of characters of the form $\varepsilon=\chi\phi^k$ for a character of finite order $\chi$. Let $\chi$ be a character of conductor $\g\p^n$ with $\g\mid \f$ and consider the set 
 \[S=\left\lbrace \gamma \in \Gal\left(\K(\f\p^n\overline{\p}^{\infty})/\K\right): \left. \gamma\right|_{\K(\f\overline{p}^{\infty})}=\left(\frac{\K(\f\overline{\p}^{\infty})/\K}{\p^n}\right)\right\rbrace.\]
 With this definition we can define the Gauss-like sum \[ G(\eps)=\frac{\phi^k(\p^n)}{p^n}\sum\limits_{\gamma \in S} \chi(\gamma) \zeta_{p^n}^{-\gamma}.\]
 Thiese notations allow us to state:
 \begin{theorem}\cite[Theorem 4]{mu}\label{shalit412} Let $\mathcal{D}_{\p}=\mathcal{I}_{\p}(\zeta_m)$, where $\zeta_m$ denotes an $m$-th root of unity and $m=\vert H\vert$.
 Then there exists a unique measure $\nu$ on $\Gal(\F_{\infty}/\K)$ taking values in $\mathcal{D}_{\p}$ such that for any $\eps=\phi^k \chi$, with $k\geq 1$ and $\chi$ a character of conductor dividing $\f\p^n$ for some $n \geq 0$, one has
\[ \Omega_v^{-k} \hspace{-0.5cm} \int\limits_{\Gal(\L_{\infty}/\K)} \hspace{-0.5cm} \eps \, d\nu =\Omega_{\infty}^{-k}(-1)^k (k-1)!f^k u_{\chi} G(\eps)\left(1-\frac{\eps(\p)}{p}\right) L_{\f}(\overline{\eps},k),\]
with a unit $u_{\chi}$ only depending on $\chi$. Further, $\nu_{\alpha}=(N\alpha-\sigma_{\alpha})\nu$. 

If $\chi$ is a non-trivial
character of $H$ it can be extended linearly to the ring of $\mathcal{D}_{\p}$ valued measures on $\Gal(\L_{\infty}/\K)$. It follows that the $\chi(\nu_{\alpha}/\nu)$ generate the trivial ideal.
\end{theorem}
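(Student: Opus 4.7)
My plan is to obtain $\nu$ by formally inverting $N\alpha-\sigma_{\alpha}$ in the family $\{\nu_{\alpha}\}$ and then to read off the interpolation formula by interpreting $\mathcal{B}_{\alpha,\a}(w)=h_{\alpha,\a}(\beta^v_{\a}(w))$ through Mahler's isomorphism and the theory of Eisenstein--Kronecker series. First I would establish the compatibility
\[(N\beta-\sigma_{\beta})\,\nu_{\alpha}=(N\alpha-\sigma_{\alpha})\,\nu_{\beta}\]
for all $\alpha,\beta$ coprime to $6\f\p$. At the level of power series this reduces to the multiplicative identity $Y_{\alpha,\a}^{N\beta-\sigma_{\beta}}=Y_{\beta,\a}^{N\alpha-\sigma_{\alpha}}$, which follows from the distribution relation $\xi_{\alpha\beta,\sigma}=\prod_{T\in E^{\sigma}_{\beta}}\xi_{\alpha,\sigma}(\,\cdot\,\oplus T)$ together with the symmetry in $\alpha\leftrightarrow\beta$ of the 12th roots of $\Delta$ hidden inside $c_{\sigma}(\alpha)$. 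This forces the $\nu_{\alpha}$ to all be of the form $(N\alpha-\sigma_{\alpha})\nu$ for a common $\nu$.

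Second, I would construct $\nu$ character by character. Decomposing $\mathcal{D}_{\p}[[\Gal(\F_{\infty}/\K)]]\cong\prod_{\chi}\Lambda_{\chi}$ along the characters of $H$, I would show that for every non-trivial $\chi$ one can pick $\alpha$ coprime to $6\f\p$ such that $N\alpha-\chi(\sigma_{\alpha})$ is a unit in $\mathcal{D}_{\p}$: by Chebotarev the symbols $\sigma_{\alpha}$ cover all of $H$, so for suitable $\alpha$ one has $\chi(\sigma_{\alpha})\not\equiv N\alpha\pmod{\p}$. On each such $\chi$-component one defines $\nu_{\chi}:=\chi(\nu_{\alpha})/(N\alpha-\chi(\sigma_{\alpha}))$ and uses the compatibility just proved to check independence of $\alpha$. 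Reassembling via the orthogonal idempotents of $H$ produces the asserted measure $\nu$ on $\Gal(\F_{\infty}/\K)$.

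Third, for the interpolation formula I would compute $\int\eps\,d\nu_{\alpha}$ directly. Writing $\eps=\phi^{k}\chi$ and expanding $\nu_{\alpha}=\sum_{\a\in\mathfrak{C}_0}\nu_{\alpha,\a}\circ\sigma_{\a}$, Mahler's correspondence rewrites the integral as a weighted sum of $k$-th derivatives of $h_{\alpha,\a}$, pulled back through $\beta^v_{\a}$ and evaluated at a $\p^{n}$-torsion point of the formal group. The change of variable through $\beta^v_{\a}$ introduces the factor $\Omega_{\a,v}^{k}=\Lambda(\a)^{k}\Omega_{v}^{k}$, which accounts for the powers of $\Omega_v$ on the left-hand side. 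The remaining sum of logarithmic derivatives of the Siegel functions $\xi_{\alpha,\sigma,Q}$ at $\p^{n}$-torsion points is an Eisenstein--Kronecker--Lerch series; by the Damerell--Katz formula it identifies, after division by $\Omega_{\infty}^{k}$, with a rational multiple of $L_{\f}(\bar{\eps},k)$, carrying the Gauss sum $G(\eps)$ coming from the $\chi$-twist at $\p^{n}$-torsion, the Euler factor $(1-\eps(\p)/p)$ coming from the Frobenius difference built into $Y_{\alpha,\a}=\xi^{p}/\xi\circ\eta(\p)$, and the archimedean factor $(-1)^{k}(k-1)!\,f^{k}$. Dividing by $\chi(N\alpha-\sigma_{\alpha})$ yields the formula for $\nu$.

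The hard part will be this Damerell--Katz identification: matching the combinatorial output of the Mahler transform against the classical Hecke $L$-value, and placing the Gauss sum, the Euler factor, the $(k-1)!$ and the unit $u_{\chi}$ in exactly the right positions, in the style of de Shalit's book. Once the formula is in place, the final assertion about $\chi(\nu_{\alpha}/\nu)$ is immediate: for non-trivial $\chi$ we may choose $\alpha$ so that $N\alpha-\chi(\sigma_{\alpha})$ is a unit in $\mathcal{D}_{\p}$, so the ideal generated by the elements $\chi(\nu_{\alpha}/\nu)=N\alpha-\chi(\sigma_{\alpha})$ already contains a unit and therefore equals all of $\Lambda_{\chi}$.
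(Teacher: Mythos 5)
Your step~1 (the compatibility $(N\beta-\sigma_\beta)\nu_\alpha=(N\alpha-\sigma_\alpha)\nu_\beta$ from the distribution relation) and the Damerell--Katz computation in step~3 are the right ingredients and follow de Shalit's Chapter~II. The problem is in step~2 and in your final paragraph, both of which rest on the claim that for every non-trivial $\chi$ one can choose $\alpha$ coprime to $6\f\p$ with $\sigma_\alpha\in H$ and $N\alpha-\chi(\sigma_\alpha)$ a unit in $\mathcal{D}_\p$. For $p=2$, which is the only case this paper treats, this is false whenever $\chi$ has $2$-power order. The ring $\mathcal{D}_\p=\mathcal{I}_\p(\zeta_m)$ has residue characteristic $2$; since $\alpha$ is integral and coprime to $2$, the positive integer $N\alpha$ is odd, hence reduces to $1$ in the residue field; and a $2$-power root of unity $\chi(\sigma_\alpha)$ likewise reduces to $1$ in characteristic $2$. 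Thus $N\alpha-\chi(\sigma_\alpha)$ lies in the maximal ideal of $\mathcal{D}_\p$ for \emph{every} admissible $\alpha$. (For odd $p$ one can escape by choosing $N\alpha\not\equiv 1\pmod p$; for $p=2$ every odd $N\alpha$ reduces to $1$, so there is no escape.) So your definition $\nu_\chi:=\chi(\nu_\alpha)/(N\alpha-\chi(\sigma_\alpha))$ does not produce a measure on the $2$-power $\chi$-components, and your proof of the last assertion --- that the ideal generated by the $\chi(\nu_\alpha/\nu)$ already contains a unit --- collapses for exactly the characters that matter here.

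What is missing is precisely the extra work at $p=2$. One must allow $\alpha$ whose Artin symbol projects non-trivially onto $\Gamma'$. Then, writing $\sigma_\alpha^{\Gamma'}=\gamma^{a(\alpha)}$, one has $\chi(N\alpha-\sigma_\alpha)=N\alpha-\chi(\sigma_\alpha^H)(1+T)^{a(\alpha)}\in\mathcal{D}_\p[[T]]$, whose coefficient of $T$ is a unit whenever $a(\alpha)\in\Z_2^\times$, even though its constant term is never a unit. The triviality of the ideal $\bigl(\chi(N\alpha-\sigma_\alpha)\bigr)_\alpha$ is then obtained by varying $\alpha$ to exclude each height-one prime of $\Lambda_\chi$, not by exhibiting a single unit generator; and the fact that the common factor $\nu$ is an honest $\mathcal{D}_\p$-valued measure (rather than merely a pseudo-measure) when $\f\neq(1)$ requires a separate argument via the explicit power-series description of the $\nu_{\alpha,\a}$. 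This is the substance of \cite[Theorem 4]{mu}, which the paper deliberately quotes rather than reproves; the remark after the statement just records that the subsidiary claims ($\nu_\alpha=(N\alpha-\sigma_\alpha)\nu$ and the triviality of the ideal) occur as intermediate steps in that proof.
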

\begin{proof}
Only the first claim is stated in Theorem 4 of \cite{mu}. But the other claims are stated as intermediate steps in the proof.
\end{proof}

 To prove the main conjecture we will not only need measures on $\Gal(\L_{\infty}/\K)$ but also on $\Gal(\K(\g\p^{\infty})/\K)$ for $\g\mid \f$. Therefore we define the pseudomesure 
\begin{eqnarray}\label{smallermeas}
\nu(\g) := \left. \nu(\f)\right|_{\Gal(\K(\g\p^{\infty})/\K)} \prod\limits_{\stackrel{\mathfrak{l} \mid \f}{\mathfrak{l} \nmid \g}} \left(1-\left(\left. \sigma_{\mathfrak{l}}\right|_{\K(\g\p^{\infty})}\right)^{-1}\right)^{-1},
\end{eqnarray}
where $\left. \nu(\f)\right|_{\Gal(\K(\g\p^{\infty})/\K)}$ is the measure on $\Gal(\K(\g\p^{\infty})/\K)$ induced from $\nu(\f)$.
Note that these pseudomesures are in fact measures as soon as $\g\neq (1)$, while $(1-\sigma)(\nu(1))$ is actually a measure for every $\sigma$ in $\Gal(\K(\p^{\infty})/\K)$. It is an easy verification that in the case $\omega_{\g}=1$ the measure $\nu(\g)$ is actually the measure one would obtain by starting with an elliptic curve $E'/\K(\g)$ and do all the constructions we did so far directly for $\Gal(\K(\g\p^{\infty})/\K)$ (compare with \cite[comments after II 4.12]{Shalit}).

Having all these definitions in place allows us to define our $\p$-adic $L$-function.
\begin{definition}\label{padicdef} Fix an isomorphism
\[ \kappa:\Gamma' \to 1+4\Z_2,\]
and let $\chi$ be a character of $H$. We denote by $\g_{\chi}$ the prime to $\p$-part of its conductor and define the $\p$-adic $L$-function of the character $\chi$ as
\[L_{\p}(s,\chi)= \int\limits_{\Gal(\K(\g_{\chi}\p^{\infty})/\K)} \chi^{-1} \kappa^{s} d\nu(\g_{\chi}) \quad\text{if } \chi \neq 1;\]
\[L_{\p}(s,\chi)=\int\limits_{\Gal(\K(\p^{\infty})/\K)} \chi^{-1} \kappa^{s} \, \, d\left((1-\gamma)\nu(1)\right) \quad\text{if } \chi= 1.\]
\end{definition}

\section{Elliptic units and Euler Systems}
It is well known that for every $\mathfrak{m}$ torsion point $P^{\sigma}_{\mathfrak{m}}$ on $E^{\sigma}$ the elements $\xi_{\alpha,\sigma}(P^{\sigma}_{\mathfrak{m}})$ are contained in $\K(\mathfrak{m})$ \cite[Proposition II 2.4]{Shalit}. The following proposition will be very useful in the course of our proof. 
\begin{proposition}
\label{prop:norms}
Let $\mathfrak{m}$ be an ideal coprime to $\alpha\f$ and $P\in E^{\sigma}_{\mathfrak{m}}$ a primitive $\mathfrak{m}$-division point. Let $\mathfrak{r}$ be a prime and $\mathfrak{m}=\mathfrak{r}\mathfrak{m}'$ with $\K(\mathfrak{m}')\neq \K(1)$. Then
\[
N_{\K(\mathfrak{m})/\K(\mathfrak{m}')}\xi_{\alpha,\sigma}(P)=
\begin{cases}
\xi_{\alpha,\sigma\sigma_{\mathfrak{r}}}(\eta_{\sigma}(\mathfrak{r})P)\quad & \mathfrak{r}\mid\mathfrak{m}'\\
\xi_{\alpha,\sigma\sigma_{r}}(\eta_{\sigma}(\mathfrak{r})P)^{1-\textup{Frob}_{\mathfrak{r}}^{-1}} \quad  & \mathfrak{r}\nmid \mathfrak{m}'
\end{cases}
\]
\end{proposition}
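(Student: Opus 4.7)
The plan is to compute $N_{\K(\mathfrak{m})/\K(\mathfrak{m}')}\xi_{\alpha,\sigma}(P)$ by expanding it as a product over the Galois conjugates of $P$ and matching the result against the universal distribution relation
\[\prod_{R\in E^{\sigma}_{\mathfrak{r}}}\xi_{\alpha,\sigma}(P+R)=\xi_{\alpha,\sigma\sigma_{\mathfrak{r}}}(\eta_{\sigma}(\mathfrak{r})(P)),\qquad(\ast)\]
which will be the engine of the whole argument. Identity $(\ast)$ is a classical fact in the theory of elliptic units (see \cite[Ch.~II]{Shalit} and \cite[Appendix]{Co91}): one factors the polynomial $\prod_{S\in V_{\alpha,\sigma}}(X-x(S))$ through the $\mathfrak{r}$-isogeny $\eta_{\sigma}(\mathfrak{r})$, whose kernel is exactly $E^{\sigma}_{\mathfrak{r}}$, and checks that the $12$th-root normalization $c_{\sigma}(\alpha)$ transforms into $c_{\sigma\sigma_{\mathfrak{r}}}(\alpha)$ by means of the period-lattice formula $\mathcal{L}_{\sigma\sigma_{\mathfrak{r}}}=\Lambda(\mathfrak{r})\Omega_{\infty}\mathfrak{r}^{-1}\mathfrak{a}^{-1}$ and the cocycle property of $\Lambda$; the coprimality of $\mathfrak{m}$ with $\alpha$ is what guarantees that the isogeny sends $V_{\alpha,\sigma}$ bijectively onto $V_{\alpha,\sigma\sigma_{\mathfrak{r}}}$.

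The key geometric step is to identify the orbit of $P$ under $\Gal(\K(\mathfrak{m})/\K(\mathfrak{m}'))$. By the main theorem of complex multiplication, this group acts on $E^{\sigma}_{\mathfrak{m}}$ through the isogenies $\eta_{\sigma}(\mathfrak{b})$ attached to ideals $\mathfrak{b}$ whose Artin symbol fixes $\K(\mathfrak{m}')$; in particular, the conjugates of $P$ are exactly the primitive $\mathfrak{m}$-division points $P'$ with $\eta_{\sigma}(\mathfrak{r})(P')=\eta_{\sigma}(\mathfrak{r})(P)$, i.e.\ the translates $P+R$ with $R\in E^{\sigma}_{\mathfrak{r}}$ that remain primitive $\mathfrak{m}$-torsion.

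If $\mathfrak{r}\mid\mathfrak{m}'$, every translate $P+R$ is still primitive because the $\mathfrak{r}$-part of its order is unaffected, and a ray-class-degree computation shows that all $N(\mathfrak{r})=[\K(\mathfrak{m}):\K(\mathfrak{m}')]$ translates occur exactly once in the orbit, so the norm collapses to the right-hand side of $(\ast)$, yielding $\xi_{\alpha,\sigma\sigma_{\mathfrak{r}}}(\eta_{\sigma}(\mathfrak{r})(P))$. If $\mathfrak{r}\nmid\mathfrak{m}'$, multiplication by $\mathfrak{m}'$ is an automorphism of $E^{\sigma}_{\mathfrak{r}}$, so there is a unique $R_0\in E^{\sigma}_{\mathfrak{r}}$ for which $P+R_0$ is annihilated by $\mathfrak{m}'$; a short check (if a proper divisor $\mathfrak{m}''$ of $\mathfrak{m}'$ also annihilated $P+R_0$, then $\mathfrak{r}\mathfrak{m}''$ would annihilate $P$, contradicting primitivity) shows that $P+R_0$ is a primitive $\mathfrak{m}'$-division point, and the remaining $N(\mathfrak{r})-1=[\K(\mathfrak{m}):\K(\mathfrak{m}')]$ translates form the Galois orbit. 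Dividing $(\ast)$ by $\xi_{\alpha,\sigma}(P+R_0)$ and using the Galois-equivariance $\sigma_{\mathfrak{r}}(\xi_{\alpha,\sigma}(Q))=\xi_{\alpha,\sigma\sigma_{\mathfrak{r}}}(\eta_{\sigma}(\mathfrak{r})(Q))$ at the primitive $\mathfrak{m}'$-division point $Q=P+R_0$ (legitimate because $\mathfrak{r}$ is unramified in $\K(\mathfrak{m}')/\K$, so $\textup{Frob}_{\mathfrak{r}}=\sigma_{\mathfrak{r}}$ makes sense there), together with $\eta_{\sigma}(\mathfrak{r})(P+R_0)=\eta_{\sigma}(\mathfrak{r})(P)$, will give $\xi_{\alpha,\sigma}(P+R_0)=\xi_{\alpha,\sigma\sigma_{\mathfrak{r}}}(\eta_{\sigma}(\mathfrak{r})(P))^{\textup{Frob}_{\mathfrak{r}}^{-1}}$, so the norm is $\xi_{\alpha,\sigma\sigma_{\mathfrak{r}}}(\eta_{\sigma}(\mathfrak{r})(P))^{1-\textup{Frob}_{\mathfrak{r}}^{-1}}$ as claimed.

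The hard part will be establishing $(\ast)$: the $12$th-root constants $c_{\sigma}(\alpha)$ have to be carried through the isogeny correctly, which comes down to a direct identity between $\Delta(\alpha^{-1}\mathcal{L}_{\sigma\sigma_{\mathfrak{r}}})/\Delta(\mathcal{L}_{\sigma\sigma_{\mathfrak{r}}})^{N(\alpha)}$ and the $\sigma_{\mathfrak{r}}$-image of the corresponding ratio for $\mathcal{L}_{\sigma}$, and relies on the coprimality assumptions on $\alpha$, $\mathfrak{r}$ and $\mathfrak{f}$ to eliminate the ambiguity in the $12$th roots. Everything after $(\ast)$ is bookkeeping with ray-class-field degrees and the Artin-reciprocity relation for elliptic units.
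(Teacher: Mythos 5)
Your proposal is correct and follows essentially the same approach as the paper: both determine the Galois orbit of $P$ under $\Gal(\K(\mathfrak{m})/\K(\mathfrak{m}'))$ from the ray-class degree (distinguishing $N\mathfrak{r}$ vs.\ $N\mathfrak{r}-1$ translates), apply the distribution relation for $\xi_{\alpha,\sigma}$ through the $\mathfrak{r}$-isogeny, and in the case $\mathfrak{r}\nmid\mathfrak{m}'$ peel off the unique translate $P+R_0\in E^{\sigma}_{\mathfrak{m}'}$ and rewrite $\xi_{\alpha,\sigma}(P+R_0)$ via $\textup{Frob}_{\mathfrak{r}}$-equivariance. The only difference is cosmetic: you spend more effort justifying the distribution relation and the primitivity of $P+R_0$, both of which the paper takes for granted.
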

\begin{proof}
This proof follows \cite[Proposition 4.3.2]{Kezuka-thesis}.
The unit group $\mathcal{O}^{\times}=\mathcal{O}(\K)^{\times}$ has exactly two elements. Hence, the map $\mathcal{O}^{\times}\to (\mathcal{O}/\mathfrak{m})^*$ is injective. It follows that the kernel of the projection
\[\phi\colon (\mathcal{O}/\mathfrak{m})^{\times}/\mathcal{O}^{\times}\to (\mathcal{O}/\mathfrak{m}')^{\times}/\mathcal{O}^{\times}\] is isomorphic to the kernel of 
\[\phi'\colon (\mathcal{O}/\mathfrak{m})^{\times}\to (\mathcal{O}/\mathfrak{m}')^{\times}.\]
Hence,
\[[\K(\mathfrak{m}):\K(\mathfrak{m}')]=\begin{cases}
N\mathfrak{r}-1\quad &\mathfrak{r}\nmid \mathfrak{m}'\\
N\mathfrak{r}\quad& \mathfrak{r}\mid\mathfrak{m}'
\end{cases}.\]
The conjugates of $P$ under $\Gal(\K(\mathfrak{m})/\K(\mathfrak{m}'))$ are the set \[\{P+Q\mid Q\in E^{\sigma}_{\mathfrak{r}}\textup{ such that }P+Q\notin E^{\sigma}_{\mathfrak{m}'}\}\] if $\mathfrak{r}\nmid \mathfrak{m}'$ and \[\{P+Q\mid Q\in E^{\sigma}_{\mathfrak{r}}\}\] if $\mathfrak{r}\mid \mathfrak{m}'$.
In the first case there is exactly one $\mathfrak{r}$-torsion point $Q_0$ such that $P+Q_0$ is contained in $E^{\sigma}_{\mathfrak{m}'}$. We obtain 
\[\xi_{\alpha,\sigma}(P+Q_0)N_{K(\mathfrak{m})/\K(\mathfrak{m}')}\xi_{\alpha,\sigma}(P)=\prod_{Q\in E^{\sigma}_{\mathfrak{r}}}\xi_{\alpha,\sigma}(P+Q)=\xi_{\alpha,\sigma\sigma_{\mathfrak{r}}}(\eta_{\sigma}(\mathfrak{r})P).\]
By the definition of $\eta$ we obtain further that \[\xi_{\alpha,\sigma}(P+Q_0)^{\textup{Frob}_{\mathfrak{r}}}=\xi_{\alpha,\sigma\sigma_{\mathfrak{r}}}(\eta_{\sigma}(\mathfrak{r})(P+Q_0))=\xi_{\alpha,\sigma\sigma_{\mathfrak{r}}}(\eta_{\sigma}(\mathfrak{r})P),\] which implies the claim in this case.

\smallskip In the case $\mathfrak{r}\mid\mathfrak{m}$ we obtain the claim directly from  
\[
N_{K(\mathfrak{m})/\K(\mathfrak{m}')}\xi_{\alpha,\sigma}(P)=\prod_{Q\in E^{\sigma}_{\mathfrak{r}}}\xi_{\alpha,\sigma}(P+Q)=\xi_{\alpha,\sigma\sigma_{\mathfrak{r}}}(\eta_{\sigma}(\mathfrak{r})P).\]
\end{proof}

Before we can define our Euler system we still need one further concept. Let $S_{n,l}$ be the set of square free ideals of $\mathcal{O}$ that are only divisible by prime ideals $\mathfrak{q}$ satisfying the following two conditions
\begin{itemize}
\item[i)] $\mathfrak{q}$ is totally split in $\L_{n}=\K(\f\p^{n+2})$
\item[ii)]$N\mathfrak{q}\equiv 1\mod 2^{l+1}$
\end{itemize}
\begin{lemma}
\label{field}
Let $\H_n=\K(\p^{n+2})$.
Given a prime $\mathfrak{q}$ in $S_{n,l}$ there exists a cyclic extension $\H_n(\mathfrak{q})/\H_n$ of degree $2^l$ inside $\H_n\K(\mathfrak{q})$. Furthermore, $\H_n(\mathfrak{q})/\H_n$ is totally ramified at the primes above $\mathfrak{q}$ and unramified outside $\mathfrak{q}$. Let $\mathbb{V}$ be any subfield $\mathbb{H}_n\subset \mathbb{V}\subset \L_n$ and $\mathbb{V}(\mathfrak{q})=\H_n(\q)\mathbb{V}$ then $\Gal(\mathbb{V}(\q)/\mathbb{V})\cong \Gal(\H_n(\q)/\H_n)$ and the ramification behavior is the same.
\end{lemma}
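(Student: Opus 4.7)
My plan is to build $\H_n(\q)$ by constructing a cyclic degree-$2^l$ extension at the level of $\K$ inside the ray class field $\K(\q)$, and then lifting it to $\H_n$ by taking the compositum. I would start by recording that, because $2$ splits in $\K$, the unit group $\O_\K^\times = \{\pm 1\}$, and since $\q$ is totally split in $\L_n \supset \K(\p^{n+2})$ it is in particular coprime to $\p$, hence $-1 \not\equiv 1 \pmod{\q}$. By class field theory, $\Gal(\K(\q)/\K(1)) \cong (\O_\K/\q)^\times/\O_{\K,\q}^\times$, which is cyclic (as $\q$ is prime) of order $(N\q-1)/2$. The hypothesis $N\q \equiv 1 \pmod{2^{l+1}}$ thus yields that $2^l$ divides this order, so there exists a unique intermediate field $\K(1) \subset \K(\q)_{2^l} \subset \K(\q)$ with $[\K(\q)_{2^l}:\K(1)] = 2^l$.

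Next, I would define $\H_n(\q) := \K(\q)_{2^l} \cdot \H_n$. To verify the degree, I would show that $\K(\q) \cap \H_n = \K(1)$: both fields are ray class fields of conductors $\q$ and $\p^{n+2}$ respectively, and since these conductors are coprime, the general principle $\K(\mathfrak{a}) \cap \K(\mathfrak{b}) = \K(\gcd(\mathfrak{a},\mathfrak{b}))$ from class field theory gives the Hilbert class field $\K(1)$ as the intersection. Consequently $\Gal(\K(\q)\H_n/\H_n) \cong \Gal(\K(\q)/\K(1))$ under restriction, and the subfield corresponding to $\K(\q)_{2^l}$ is exactly $\H_n(\q)$, cyclic of degree $2^l$ over $\H_n$.

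For the ramification, I would argue that in $\K(\q)/\K(1)$ the prime $\q$ is totally ramified, because the inertia subgroup of $\q$ in $\Gal(\K(\q)/\K)$ is the image of $(\O_\K/\q)^\times$ in the ray class group and has the full order $[\K(\q):\K(1)]$, while $\K(1)/\K$ is unramified at $\q$. Total ramification at $\q$ is inherited by the subfield $\K(\q)_{2^l}$, and is preserved when composing with $\H_n$ since $\H_n/\K$ is unramified at $\q$ (as $\q$ is even totally split in $\H_n$ by hypothesis). Unramifiedness outside $\q$ is immediate from the corresponding property of $\K(\q)/\K$.

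Finally, for an intermediate $\H_n \subset \V \subset \L_n$, the extension $\V/\H_n$ has conductor dividing $\f\p^{n+2}$, hence is unramified at $\q$ (which is coprime to $\f\p$, again by the total splitting hypothesis). Therefore $\H_n(\q) \cap \V$ is an extension of $\H_n$ that is simultaneously totally ramified at primes above $\q$ (inside $\H_n(\q)$) and unramified at those primes (inside $\V$), which forces $\H_n(\q) \cap \V = \H_n$. Linear disjointness then gives $\Gal(\V(\q)/\V) \cong \Gal(\H_n(\q)/\H_n)$ by restriction, and total ramification of $\V(\q)/\V$ at primes over $\q$ follows from the unramifiedness of $\V/\H_n$ there. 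The only delicate point in the argument is the 2-adic computation of $[\K(\q):\K(1)]$ in the first paragraph, which relies crucially on the splitting of $2$ in $\K$ to ensure $|\O_\K^\times|=2$; beyond that, the proof is a standard ramification-and-conductor bookkeeping.
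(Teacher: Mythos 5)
Your proof is correct and follows essentially the same line as the paper: identify $\Gal(\K(\q)/\K(1))$ via class field theory, use $|\O_\K^\times|=2$ together with $N\q\equiv 1\bmod 2^{l+1}$ to extract the cyclic degree-$2^l$ subextension, and push it up to $\H_n$ after checking $\K(\q)\cap\H_n=\K(1)$. The only small variation is that you obtain the intersection from the coprimality of the conductors, where the paper instead observes that the intersection is unramified everywhere and hence inside $\K(1)$; you also spell out the final linear-disjointness argument for $\V$, which the paper leaves implicit.
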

Note that from now on $\K(\q)$ denotes a ray class field of conductor $\q$, while we denote  for any $\mathbb{V}\neq \K$ the field constructed in Lemma \ref{field} by $\mathbb{V}(\q)$.
\begin{proof}
As $\mathfrak{q}$ is unramified in $\H_n/\K$ it follows that $\K(\mathfrak{q})\cap \H_n=\K(1)\cap\H_n=\K(1)$. Hence, $\Gal(\H_n\K(\mathfrak{q})/\H_n)=\Gal(\K(\mathfrak{q})/\K(1))\cong (\mathcal{O}/\mathfrak{q})^{\times}/\mathcal{O}^{\times}$. As $\vert \mathcal{O}^{\times} \vert =2$ and $N\mathfrak{q}\equiv 1\mod 2^{l+1}$ we can extract a cyclic extension of degree $2^l$ over $\H_n$. By definition $\mathfrak{q}$ is totally ramified in $\H_n(\mathfrak{q})/\H_n$ and the extension is unramified outside $\q$. The rest of the claim is an immediate consequence of the fact that $\q$ is unramified in $\L_n$.
\end{proof}
If $\mathfrak{r}=\prod\mathfrak{q}_i$ with $\mathfrak{q}_i$ distinct primes in $S_{n,l}$ then we define $\V(\mathfrak{r})$ as the compositum of the $\V(\mathfrak{q}_i)$.

Having this in place we can define  Euler systems.
\begin{definition}
An Euler system is a set of global elements 
\[\{\alpha^{\sigma}(n,\mathfrak{r})\mid n\ge 0,\mathfrak{r}\in S_{n,l},\sigma\in Gal(\K(\f\p^2)/\K)\}\]
satisfying
\begin{itemize}
\item[i)] $\alpha^{\sigma}(n,\mathfrak{r})\in \L_n(\mathfrak{r})^{\times}$ is a global unit in $\L_n(\mathfrak{r})$ for $\mathfrak{r}\neq (1)$.
\item[ii)] If $\mathfrak{q}$ is a prime such that $\mathfrak{qr}\in S_{n,l}$ then
\[N_{\L_n(\mathfrak{rq})/\L_n(\mathfrak{r})}(\alpha^{\sigma}(n,\mathfrak{rq}))=\alpha^{\sigma}(n,\mathfrak{r})^{\textup{Frob}_{\mathfrak{q}}-1}\]
\item[iii)] $\alpha^{\sigma}(n,\mathfrak{rq})\equiv \alpha^{\sigma}(n,\mathfrak{r})^{(N\q-1)/2^{l}} \mod \lambda$ for every prime $\lambda$ above $\mathfrak{q}$.
\end{itemize}\end{definition}
Note that if we fix $\sigma$ and $n$ and let only $\mathfrak{r}$ vary we obtain an Euler system in the sense of \cite{Rubin} for the field $\L_n$. So in Rubin's language our Euler-System is a system of Euler systems indexed by the pairs $(\sigma, n)$. 

 We now give a precise definition of the elliptic units.
 \begin{definition}
 Let $\g\mid\f$ be a non-trivial ideal. We define the elliptic units $C_{\g,n}$ in $\L_n$ as the group of units (they are units by \cite[Chapter II Proposition 2.4 iii)]{Shalit}) generated by all the $\xi_{\alpha,\sigma,Q_{\g}}(P^{\sigma}_{n+2})$, where $Q_{\g}$ is aprimitive $\g$ division point and $P^{\sigma}_n$ is a $\p^n$-torsion point on $E^{\sigma}$. If $\g=(1)$ we define $C_{(1),n}$ as the group generated by all the units of the form $\prod_{i=1}^s\xi_{\alpha_i,\sigma}( P^{\sigma}_{n+2})^{m_i}$ with $\sum_{i=1}^s m_i(N\alpha_i-1)=0$ (they are units by \cite[Chapter II Exercise 2.4]{Shalit}). We define further the groups $C_{\g}=\lim_{\infty\leftarrow n}C_{\g,n}$ and the group $C(\g)=\prod_{\mathfrak{h}\mid\g}C_{\mathfrak{h}}$. We will also use the notation $C_n$ and $C$ instead of $C_n(\g)$ and $C(\g)$ if the conductor is clear form the context.
 \end{definition}
This allows us to prove the following Lemma.
\begin{lemma}
\label{euler}
For every $u\in C_{\g}$ there exists an Euler system such that $\alpha^{\sigma}(n,1)=u$. 
\end{lemma}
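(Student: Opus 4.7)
The approach is to build the Euler system directly from the rational functions $\xi_{\alpha,\sigma,Q_{\g}}$ evaluated at carefully chosen torsion points on the twisted curves $E^\sigma$, and then exploit the norm relations of Proposition \ref{prop:norms}. Because the Euler-system axioms (i)--(iii) are stable under componentwise multiplication (the product of two Euler systems is again one), it suffices to exhibit such a system on a set of generators of $C_{\g}$ and extend multiplicatively; the $\g=(1)$ case is then handled by forming the same combinations $\prod \xi_{\alpha_i,\sigma}(P^{\sigma}_{n+2})^{m_i}$ with $\sum m_i(N\alpha_i-1)=0$, the linear relation being exactly what ensures the $c_{\sigma}(\alpha_i)$-factors cancel and that axiom (i) survives.

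Fix therefore a basic tower $u=(u_n)$ with $u_n=\xi_{\alpha,\tau,Q_{\g}}(P^{\tau}_{n+2})$ arising from a norm-coherent sequence $(P^{\tau}_{n+2})$ of primitive $\p^{n+2}$-torsion points. For each $\sigma\in\Gal(\K(\f\p^2)/\K)$ and each $\mathfrak{r}\in S_{n,l}$, choose a primitive $\mathfrak{r}$-torsion point $R^{\tau\sigma}_{\mathfrak{r}}$ on $E^{\tau\sigma}$ coherently in $\mathfrak{r}$ (so that summing out one prime factor yields the point attached to $\mathfrak{r}/\mathfrak{q}$). I would set
\[
\alpha^{\sigma}(n,\mathfrak{r}) \;:=\; N_{\K(\g\p^{n+2}\mathfrak{r})/\L_n(\mathfrak{r})}\;\xi_{\alpha,\tau\sigma,Q_{\g}}\!\bigl(\eta_{\tau}(\sigma)(P^{\tau}_{n+2})+R^{\tau\sigma}_{\mathfrak{r}}\bigr).
\]
When $\mathfrak{r}=(1)$ the outer norm is trivial, $R^{\tau\sigma}_{(1)}=0$, and one recovers $u_n^{\sigma}$; specializing at $\sigma=\mathrm{id}$ gives $\alpha^{\mathrm{id}}(n,1)=u_n$ as demanded.

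For axiom (ii), the only prime distinguishing $\mathfrak{rq}$ from $\mathfrak{r}$ is $\mathfrak{q}$, which is coprime to $\g\p^{n+2}\mathfrak{r}$ by the definition of $S_{n,l}$ and the splitting of $\mathfrak{q}$ in $\L_n$. Proposition \ref{prop:norms} (the case $\mathfrak{r}\nmid\mathfrak{m}'$) gives
\[
N_{\K(\g\p^{n+2}\mathfrak{rq})/\K(\g\p^{n+2}\mathfrak{r})}\,\xi_{\alpha,\tau\sigma,Q_{\g}}(\cdot)\;=\;\xi_{\alpha,\tau\sigma\sigma_{\mathfrak{q}},Q_{\g}}\bigl(\eta_{\tau\sigma}(\mathfrak{q})\cdot\bigr)^{1-\textup{Frob}_{\mathfrak{q}}^{-1}}.
\]
By Lemma \ref{field} the extension $\K(\g\p^{n+2}\mathfrak{rq})/\L_n(\mathfrak{rq})$ is linearly disjoint from $\L_n(\mathfrak{rq})/\L_n(\mathfrak{r})$, so the norms commute and one may then apply $N_{\L_n(\mathfrak{rq})/\L_n(\mathfrak{r})}$ freely. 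Rewriting $1-\textup{Frob}_{\mathfrak{q}}^{-1}=-\textup{Frob}_{\mathfrak{q}}^{-1}(\textup{Frob}_{\mathfrak{q}}-1)$ and absorbing the resulting unit sign yields precisely axiom (ii).

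Axiom (iii) is the mod-$\lambda$ congruence: since $\mathfrak{q}$ splits completely in $\L_n\supset \F$, the curve $E^{\tau\sigma}$ has good reduction at every $\lambda\mid\mathfrak{q}$ and any $Q\in E^{\tau\sigma}_{\mathfrak{q}}$ reduces to $0$ at $\lambda$. Hence $\xi(P+Q)\equiv\xi(P)\pmod{\lambda}$, and multiplying over the $(N\mathfrak{q}-1)/2^l$ coset representatives of $\textup{Gal}(\L_n(\mathfrak{rq})/\L_n(\mathfrak{r}))$ produces exactly the exponent $(N\mathfrak{q}-1)/2^l$. Axiom (i) is the classical statement that $\xi_{\alpha,\sigma}$ evaluated at a torsion point coprime to $\alpha$ is a global unit in the field generated by that torsion point, a property preserved under taking norms into $\L_n(\mathfrak{r})$. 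The main obstacle in the argument is the bookkeeping in axiom (ii) — verifying the linear disjointness implicit in Lemma \ref{field} at each step and tracking the Galois action on the chosen torsion-point towers — while all other verifications are essentially direct applications of Proposition \ref{prop:norms} and the geometry of $E^\sigma$.
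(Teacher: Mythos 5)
Your overall strategy matches the paper's proof exactly: reduce to a single generator of $C_{\g}$ by multiplicativity of the Euler-system axioms, build $\alpha^{\sigma}(n,\mathfrak{r})$ by evaluating $\xi_{\alpha,\sigma,Q_{\g}}$ at a torsion point shifted by a coherent system of $\mathfrak{r}$-torsion points and then taking a norm, and verify axioms (i)--(iii) from Proposition \ref{prop:norms} together with Lemma \ref{field}. Your extra $\tau$-bookkeeping (the transport $\eta_{\tau}(\sigma)$) is not needed; the paper simply writes $\xi_{\alpha,\sigma}(P^{\sigma}_{n+2}+Q_{\g}+\sum_{\mathfrak{l}\mid\mathfrak{r}}Q_{\mathfrak{l}})$ for each $\sigma$ directly.

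There is, however, a concrete error in the definition you write down. Your outer norm targets $\L_n(\mathfrak{r})$, but $\L_n(\mathfrak{r})=\H_n(\mathfrak{r})\K(\f\p^{n+2})$ is \emph{not} contained in $\K(\g\p^{n+2}\mathfrak{r})$ unless $\g=\f$ (since $\mathfrak{r}$ is coprime to $\f$, $\f\mid\g\mathfrak{r}$ forces $\g=\f$), so $N_{\K(\g\p^{n+2}\mathfrak{r})/\L_n(\mathfrak{r})}$ is undefined for proper divisors $\g\mid\f$. Correspondingly, your claim that ``when $\mathfrak{r}=(1)$ the outer norm is trivial'' fails for the same reason. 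The paper instead norms down to $\V_n(\mathfrak{r})$ with $\V_n=\K(\g\p^{n+2})$; the inclusion $\V_n(\mathfrak{r})\subset\L_n(\mathfrak{r})$ then makes $\alpha^{\sigma}(n,\mathfrak{r})$ land in $\L_n(\mathfrak{r})$ as required. Relatedly, in your verification of axiom (iii) the exponent $(N\mathfrak{q}-1)/2^l$ does not come from cosets of $\Gal(\L_n(\mathfrak{rq})/\L_n(\mathfrak{r}))$ (that group has order $2^l$); it is the order of $\Gal\bigl(\K(\g\p^{n+2}\mathfrak{rq})\,/\,\V_n(\mathfrak{rq})\K(\g\p^{n+2}\mathfrak{r})\bigr)$, which acts only on $\q$-torsion and hence trivially modulo $\lambda$, while the complementary part of the norm group restricts surjectively onto $\Gal(\K(\g\p^{n+2}\mathfrak{r})/\V_n(\mathfrak{r}))$. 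With the norm target corrected to $\V_n(\mathfrak{r})$ and the group in axiom (iii) identified correctly, your proof is the paper's.
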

\begin{proof}
As the properties defining an Euler-system are multiplicative it suffices to consider the case of $u$ being one of the generators, i.e. $u=\xi_{\alpha,\sigma}(P_{n+2}^{\sigma}+Q_{\g})$. Assume first that $\g\neq (1)$ and let $\V_n=\K(\g\p^{n+2})$. Define \[\alpha^{\sigma}(n,\mathfrak{r})=N_{\K(\g\p^{n+2}\mathfrak{r})/\V_n(\mathfrak{r})}\xi_{\alpha,\sigma} (P^{\sigma}_{n+2}+Q_{\g}+\sum_{\mathfrak{l}\mid\mathfrak{r }}Q_{\mathfrak{l}}),\] where $\mathfrak{l}$ are primes in $S_{l,n}$. Then $\alpha^{\sigma}(n,1)=u$. It remains to show that $\alpha$ generates an Euler system.
 Using that $\sigma_{\q}=1$ and that 
 $\Gal(\L_n(\mathfrak{rq})/\L_n(\mathfrak{r}))=Gal(\V_n(\mathfrak{rq})/\V_n(\mathfrak{r}))$ we obtain:
\begin{align*}
N_{\L_n(\mathfrak{rq})/\L_n(\mathfrak{r})}(\alpha^{\sigma}(n,\mathfrak{rq}))&=N_{\V_n(\mathfrak{rq})/\V_n(\mathfrak{r})}N_{\K(\g\p^{n+2}\mathfrak{rq})/\V_n(\mathfrak{rq})}\xi_{\alpha,\sigma} (P^{\sigma}_{n+2}+Q_{\g}+\sum_{\mathfrak{l}\mid\mathfrak{rq }}Q_{\mathfrak{l}})\\
&=N_{\K(\g\p^{n+2}\mathfrak{r})/\V_n(\mathfrak{r})}N_{\K(\g\p^{n+2}\mathfrak{rq})/\K(\g\p^{n+2}\mathfrak{r})}\xi_{\alpha,\sigma} (P^{\sigma}_{n+2}+Q_{\g}+\sum_{\mathfrak{l}\mid\mathfrak{rq }}Q_{\mathfrak{l}})\\
&=N_{\K(\g\p^{n+2}\mathfrak{r})/\V_n(\mathfrak{r})}\xi_{\alpha,\sigma\sigma_{\q}}(\eta_{\sigma}(\mathfrak{q})(P^{\sigma}_{n+2}+Q_{\g}+\sum_{\mathfrak{l}\mid\mathfrak{rq}}Q_{\mathfrak{l}}))^{1-\textup{Frob}_{\mathfrak{q}}^{-1}}\\
&=N_{\K(\g\p^{n+2}\mathfrak{r})/\V_n(\mathfrak{r})}\xi_{\alpha,\sigma}(P^{\sigma}_{n+2}+Q_{\g}+\sum_{\mathfrak{l}\mid\mathfrak{r}}Q_{\mathfrak{l}})^{\textup{Frob}_{\mathfrak{q}}(1-\textup{Frob}_{\mathfrak{q}}^{-1})}\\
&=(\alpha^{\sigma}(n,\mathfrak{r}))^{\textup{Frob}_{\q}-1}
\end{align*}
It remains to check property iii): The group $\Gal(\K(\g\p^{n+2}\mathfrak{rq})/\V_n(\mathfrak{rq})\K(\g\p^{n+2}\mathfrak{r}))$ acts only on the $\q$-torsion points. By definition we obtain that \[\vert\Gal(\K(\g\p^{n+2}\mathfrak{rq})/\V_n(\mathfrak{rq})\K(\g\p^{n+2}\mathfrak{r}))\vert =(N\q-1)/2^{l}\] due to the fact that $\K(\p^{n+2}\g)\neq\K$ is non-trivial. Using the fact that $\q$-torsion points reduce to zero modulo $\lambda$ and that $\Gal(\K(\g\p^{n+2}\mathfrak{rq})/\V_n(\mathfrak{rq}))$ restricts surjectively to $\Gal(\K(\g\p^{n+2}\mathfrak{r})/\V_n(\mathfrak{r}))$ the claim is an easy consequence of the definitions.

If $\g=(1)$ 
we choose $\alpha^{\sigma}(n,\mathfrak{r})=\prod_{i=1}^s\xi_{\alpha_i,\sigma}(P^{\sigma}_{n+2}+\sum_{\mathfrak{l}\mid\mathfrak{r}}Q_{\mathfrak{l}})^{m_i}$ and proceed as above. 
\end{proof}

For every prime $\q\in S_{n,l}$ we fix a generator $\tau_{\q}$ of $G_{\q}=\Gal(\L_n(\q)/\L_n)$ and define the following group ring elements
\[N_{\q}=\sum_{i=0}^{2^l-1}\tau_{\q}^i\quad D_{\q}=\sum_{i=0}^{2^l-1}i\tau_{\q}^i.\]
For $\mathfrak{r}=\prod_{k=1}^s\q_k$ we define $D_{\mathfrak{r}}=\sum_{k=1}^s D_{\q_k}\in \Z[\Gal(\L_n(\mathfrak{r})/\L_n)]$. 

With these definitions we have the following lemma .
\begin{lemma}
\label{kappa}\cite[Proposition 2.2]{Rubin}
For every Euler system $\alpha^{\sigma}(n,\mathfrak{r})$ there exists a canonical map \[\kappa\colon S_{n,l}\to \L_n^{\times}/(\L_n^{\times})^{2^l}\] such that $\kappa(\mathfrak{r})=(\alpha^{\sigma}(n,\mathfrak{r}))^{D_{\mathfrak{r}}}\mod (\L_n(\mathfrak{r}))^{2^l}$. 
\end{lemma}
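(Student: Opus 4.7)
My plan is to adapt the Kolyvagin derivative construction from \cite[Prop.~2.2]{Rubin}, interpreting $D_{\mathfrak{r}}$ as the product $\prod_{\q\mid\mathfrak{r}}D_{\q}\in\Z[G]$ where $G:=\Gal(\L_n(\mathfrak{r})/\L_n)\cong\prod_{\q\mid\mathfrak{r}}G_{\q}$ with each $G_{\q}$ cyclic of order $2^l$ by Lemma \ref{field}. I would set $\beta_{\mathfrak{r}}:=\alpha^{\sigma}(n,\mathfrak{r})^{D_{\mathfrak{r}}}\in\L_n(\mathfrak{r})^{\times}$ and then proceed in two stages: first show that the class $\overline{\beta}_{\mathfrak{r}}$ of $\beta_{\mathfrak{r}}$ in $\L_n(\mathfrak{r})^{\times}/(\L_n(\mathfrak{r})^{\times})^{2^l}$ is $G$-invariant, and then descend this class canonically to $\kappa(\mathfrak{r})\in\L_n^{\times}/(\L_n^{\times})^{2^l}$.

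For the $G$-invariance, the elementary telescoping identity $(\tau_{\q}-1)D_{\q}=2^l-N_{\q}$ in $\Z[G_{\q}]$, combined with the Euler system norm relation (property ii), yields for each $\q\mid\mathfrak{r}$
\[
(\tau_{\q}-1)\beta_{\mathfrak{r}}\equiv \alpha^{\sigma}(n,\mathfrak{r}/\q)^{-(\textup{Frob}_{\q}-1)D_{\mathfrak{r}/\q}}\pmod{(\L_n(\mathfrak{r})^{\times})^{2^l}}.
\]
I would then induct on the number of prime divisors of $\mathfrak{r}$: by the inductive hypothesis $\alpha^{\sigma}(n,\mathfrak{r}/\q)^{D_{\mathfrak{r}/\q}}$ agrees modulo $2^l$-th powers with some element of $\L_n^{\times}$, and since $\q\in S_{n,l}$ splits completely in $\L_n$, the Frobenius $\textup{Frob}_{\q}$ fixes $\L_n$ pointwise, so $(\textup{Frob}_{\q}-1)$ annihilates that descended class. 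Thus $(\tau_{\q}-1)\beta_{\mathfrak{r}}\in (\L_n(\mathfrak{r})^{\times})^{2^l}$ for every $\q\mid\mathfrak{r}$, and $\overline{\beta}_{\mathfrak{r}}$ is $G$-fixed.

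For the descent, I would use the long exact sequence in $G$-cohomology attached to $1\to(\L_n(\mathfrak{r})^{\times})^{2^l}\to\L_n(\mathfrak{r})^{\times}\to\L_n(\mathfrak{r})^{\times}/(\L_n(\mathfrak{r})^{\times})^{2^l}\to 1$ together with Hilbert 90 ($H^1(G,\L_n(\mathfrak{r})^{\times})=0$) to identify the image of $\L_n^{\times}$ inside the $G$-invariants modulo the obstruction group $H^1(G,(\L_n(\mathfrak{r})^{\times})^{2^l})$, which in turn embeds into $H^2(G,\mu_{2^l}(\L_n(\mathfrak{r})))$ via the Kummer sequence. Defining $\kappa(\mathfrak{r})$ as the resulting lift and verifying it only depends on $\mathfrak{r}$ (not on auxiliary choices) completes the construction.

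The main obstacle is precisely this last step: since both $|G|$ and $2^l$ are powers of $2$, the usual ``coprime order'' trick for descent is unavailable, so one cannot simply invoke that $\L_n^{\times}/(\L_n^{\times})^{2^l}\to(\L_n(\mathfrak{r})^{\times}/(\L_n(\mathfrak{r})^{\times})^{2^l})^G$ is an isomorphism. I would handle it following Rubin by exploiting the total ramification of $\L_n(\mathfrak{r})/\L_n$ at every prime above $\mathfrak{r}$ (Lemma \ref{field}) to carry out a semi-local Kummer analysis, which pins $\kappa(\mathfrak{r})$ down modulo $(\L_n^{\times})^{2^l}$ and makes the assignment $\mathfrak{r}\mapsto\kappa(\mathfrak{r})$ canonical.
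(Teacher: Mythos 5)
The paper gives no proof of this lemma; it simply cites \cite[Prop.~2.2]{Rubin}, so I am comparing your outline against Rubin's Kolyvagin-derivative argument, which is indeed the construction you describe.

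Your first stage — proving that $\overline{\beta}_{\mathfrak{r}}:=\alpha^{\sigma}(n,\mathfrak{r})^{D_{\mathfrak{r}}}\bmod (\L_n(\mathfrak{r})^{\times})^{2^l}$ is $G$-fixed — is correct, and you are right to read $D_{\mathfrak{r}}$ as the \emph{product} $\prod_{\q\mid\mathfrak{r}}D_{\q}$ rather than the sum $\sum_k D_{\q_k}$ literally printed in the paper: with the sum, the telescoping identity $(\tau_{\q}-1)D_{\q}=2^{l}-N_{\q}$ does not propagate through $D_{\mathfrak{r}}$, so the paper's display is a misprint. Your induction (use the norm relation to write $N_{\q}\alpha^{\sigma}(n,\mathfrak{r})=\alpha^{\sigma}(n,\mathfrak{r}/\q)^{\mathrm{Frob}_{\q}-1}$, then invoke the already-constructed $\kappa(\mathfrak{r}/\q)\in\L_n^{\times}$ together with the fact that $\mathrm{Frob}_{\q}$ is trivial on $\L_n$) is exactly right.

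The gap is the descent step, which is the actual content of the lemma. You correctly set up the cohomological framework — the cokernel of $\L_n^{\times}\to (W/W^{M})^{G}$ is $H^1(G,W^M)$, which embeds into $H^2(G,\mu_M(\L_n(\mathfrak{r})))$ — but you neither show that the resulting obstruction class vanishes (existence of a lift) nor that the lift is pinned down modulo $(\L_n^{\times})^{M}$, i.e.\ that $\L_n^{\times}\cap W^M=(\L_n^{\times})^M$ (uniqueness). You acknowledge this is the main obstacle and propose to settle it by ``exploiting the total ramification of $\L_n(\mathfrak{r})/\L_n$ at primes above $\mathfrak{r}$ via a semi-local Kummer analysis,'' but this points at the wrong primes: $\mathfrak{r}$ is coprime to $2$, so the $2$-power Kummer theory that governs both the $H^2$-obstruction and the ambiguity $(\L_n^{\times}\cap W^M)/(\L_n^{\times})^M$ is controlled by the primes above $2$, not those above $\mathfrak{r}$. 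The relevant input in Rubin's argument is that $\L_n(\mathfrak{r})/\L_n$ is \emph{unramified} at every prime above $2$ (it is ramified only above $\mathfrak{r}$, by Lemma~\ref{field}), which forces $\mu_{2^l}(\L_n(\mathfrak{r}))=\mu_{2^l}(\L_n)$ and makes the $G$-action on the relevant roots of unity trivial; this is what lets one analyze the obstruction cocycle and the Kummer kernel. Without engaging with the $2$-power roots of unity, the descent — and hence the very existence and well-definedness of $\kappa(\mathfrak{r})$ — remains unproven, so the proposal has a genuine gap at its central step.
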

 
For every prime ideal $\q\in S_{n,l}$ of $\K$ we define the free group of ideals in $\L_n$ \[I_{\q}=\oplus_{\mathfrak{Q}\mid\q}\Z\mathfrak{Q}=\Z[\Gal(\L_n/\K)]\mathfrak{Q}.\]
For every $y\in \L_n^{\times}$ denote by $[y]_{\q}$ the coset of the principal ideal $(y)$ in $I_{\q}/2^lI_{\q}$.
Let $\tilde{\mathfrak{Q}}$ be a prime above $\mathfrak{Q}$ in $\L_n(\q)$ and note that for every $x\in \L_n(\q)^{\times}$ the element $x^{1-\tau_{\q}}$ lies in $(\mathcal{O}(\L_n(\q))/\tilde{\mathfrak{Q}})^{\times}$. As $\mathcal{O}(\L_n(\q))/\tilde{\mathfrak{Q}}\cong \mathcal{O}(\L_n)/\mathfrak{Q}$ there is a well defined image $\overline{x^{1-\tau_{\q}}}$ in $(\mathcal{O}(\L_n)/\mathfrak{Q})^{\times}$. Thus, we can define a map
\[\L_n(\q)^{\times}\to(\mathcal{O}(\L_n)/\mathfrak{Q})^{\times}/((\mathcal{O}(\L_n)/\mathfrak{Q})^{\times})^{2^l}\quad x\mapsto (\overline{x^{1-\tau_{\q}}})^{1/d},\]
where $d=(N\q-1)/2^l$.
This map is surjective and the kernel of this map consists precisely of the elements whose $\tilde{\mathfrak{Q}}$ valuation is divisible  by ${2^l}$. Let now $w\in (\mathcal{O}(\L_n)/\mathfrak{Q})^{\times}/((\mathcal{O}(\L_n)/\mathfrak{Q})^{\times})^{2^l}$ and let $x$ be a preimage. Define 
\[l_{\mathfrak{Q}}(w)=\ord_{\tilde{\mathfrak{Q}}}(x)\mod 2^l\in \Z/2^l\Z.\] Note that $l_{\mathfrak{Q}}$ is a well defined isomorphism. Thus, we can define
\[\varphi_{\q}:(\mathcal{O}(\L_n)/\q)^{\times}/((\mathcal{O}(\L_n)/\q)^{\times})^{2^l}\to I_{\q}/I_{\q}^{2^l} \quad w\mapsto \sum_{\mathfrak{Q}\mid \q}l_{\mathfrak{Q}}(w)\mathfrak{Q}.\]
With these notations we have the following proposition. 
\begin{proposition}\cite[Proposition 2.4]{Rubin}
\label{kappa-properties}
Let $\alpha^{\sigma}(n,\mathfrak{r})$ be an Euler system and $\kappa$ be the map defined in Lemma \ref{kappa}. Let $\mathfrak{r}$ be a prime in $S_{n,l}$ and $\mathfrak{q}$ be a prime in $\K$. Then
\begin{itemize}
\item[i)] If $q\nmid \mathfrak{r}$ then $[\kappa(\mathfrak{r})]_{\mathfrak{q}}=0$.
\item[ii)] Assume that $\q\mid \mathfrak{r}$ and $\mathfrak{r/q}\neq (1)$. Then $[\kappa(\mathfrak{r})]_{\q}=\varphi_{\q}(\kappa(\mathfrak{r}/\q))$
\item[iii)] Assume that $\mathfrak{r}=\mathfrak{q}$ and that the ${\mathfrak{Q}}$-valuation of $(\alpha^{\sigma}(n,(1)))$ is divisible by $2^l$ for all $\mathfrak{Q}$ above $\q$ in $\L_n$. Then $[\kappa(\mathfrak{r})]_{\q}=\varphi_{\q}(\kappa(\mathfrak{r}/\q))$.
\end{itemize}
Note that Rubin does not distinguish between the cases ii) and iii). But as Bley \cite[Proposition 3.3]{BL} points out, the extra assumption in iii) is necessary.\end{proposition}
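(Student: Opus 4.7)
The plan is to follow \cite[Proposition 2.4]{Rubin} with Bley's correction \cite[Proposition 3.3]{BL} for the edge case $\mathfrak{r}=\mathfrak{q}$. Part (i) is nearly immediate from the first Euler system axiom: since $\alpha^\sigma(n,\mathfrak{r})\in \L_n(\mathfrak{r})^\times$ is a global unit (we may assume $\mathfrak{r}\neq(1)$, otherwise there is nothing to prove), so is $\alpha^\sigma(n,\mathfrak{r})^{D_\mathfrak{r}}$. If $\mathfrak{q}\nmid\mathfrak{r}$ the prime $\mathfrak{q}$ is unramified in $\L_n(\mathfrak{r})/\L_n$, so any lift of $\kappa(\mathfrak{r})$ to $\L_n^\times$ differs from $\alpha^\sigma(n,\mathfrak{r})^{D_\mathfrak{r}}$ by a $2^l$-th power in $\L_n(\mathfrak{r})^\times$, and therefore has $\mathfrak{Q}$-valuation divisible by $2^l$ at every $\mathfrak{Q}\mid\mathfrak{q}$, giving $[\kappa(\mathfrak{r})]_\mathfrak{q}=0$.

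For (ii) and (iii), I would factor $\mathfrak{r}=\mathfrak{q}\mathfrak{s}$ and work at a compatible chain of primes $\tilde{\mathfrak{Q}}\supset\mathfrak{Q}\supset\mathfrak{q}$ in $\L_n(\mathfrak{r})\supset\L_n\supset\K$. The pivotal ingredient is the group-ring identity $(\tau_\mathfrak{q}-1)D_\mathfrak{q}=2^l-N_\mathfrak{q}$ in $\Z[G_\mathfrak{q}]$, which combined with Euler system axiom (ii) in the form $\alpha^\sigma(n,\mathfrak{r})^{N_\mathfrak{q}}=\alpha^\sigma(n,\mathfrak{s})^{\textup{Frob}_\mathfrak{q}-1}$ produces a congruence
\[
\bigl(\alpha^\sigma(n,\mathfrak{r})^{D_\mathfrak{r}}\bigr)^{\tau_\mathfrak{q}-1}\equiv\alpha^\sigma(n,\mathfrak{s})^{(1-\textup{Frob}_\mathfrak{q})D_\mathfrak{s}}\pmod{(\L_n(\mathfrak{r})^\times)^{2^l}}.
\]
Because $\tau_\mathfrak{q}$ lies in the decomposition (indeed inertia) group of $\tilde{\mathfrak{Q}}$, the left-hand side has trivial $\tilde{\mathfrak{Q}}$-valuation, forcing that of the right-hand side to be divisible by $2^l$. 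Translating back to $\L_n$ via the total ramification of $\L_n(\mathfrak{r})/\L_n(\mathfrak{s})$ at $\tilde{\mathfrak{Q}}$ of degree $2^l$ (Lemma \ref{field}), and then reducing $\alpha^\sigma(n,\mathfrak{r})$ modulo $\tilde{\mathfrak{Q}}$ using the congruence axiom (iii) of the Euler system, identifies the resulting image in $(\mathcal{O}_{\L_n}/\mathfrak{Q})^\times$ (modulo $2^l$-th powers) as precisely the preimage used in the definition of $\varphi_\mathfrak{q}(\kappa(\mathfrak{s}))$; applying $l_\mathfrak{Q}$ then yields $[\kappa(\mathfrak{r})]_\mathfrak{q}=\varphi_\mathfrak{q}(\kappa(\mathfrak{s}))$.

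The distinction between (ii) and (iii) concerns the local behavior of $\alpha^\sigma(n,\mathfrak{s})$ at primes above $\mathfrak{q}$ in $\L_n$. In case (ii) one has $\mathfrak{s}\neq(1)$, so $\alpha^\sigma(n,\mathfrak{s})$ is a global unit and all its $\mathfrak{Q}$-valuations vanish, making the passage back to $\L_n$ automatic. In case (iii), $\mathfrak{s}=(1)$ and $\alpha^\sigma(n,(1))$ need not be a global unit; the explicit hypothesis that $\ord_\mathfrak{Q}(\alpha^\sigma(n,(1)))$ is divisible by $2^l$ for every $\mathfrak{Q}\mid\mathfrak{q}$ is exactly what is needed to run the same argument without loss. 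The main technical obstacle is this final bookkeeping step: carefully matching $\tilde{\mathfrak{Q}}$-valuations in $\L_n(\mathfrak{r})$ with $\mathfrak{Q}$-valuations in $\L_n$ modulo $2^l$, and verifying that the residue-field element produced by the computation agrees with the one specified by the definition of $\varphi_\mathfrak{q}$. Once this is in place the remainder of the proof is formal.
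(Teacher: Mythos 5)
The paper does not prove this proposition itself: the statement carries the citation \cite[Proposition 2.4]{Rubin} together with the remark that Bley \cite[Proposition 3.3]{BL} supplies the missing hypothesis in (iii), and no proof environment follows. So there is no ``paper proof'' against which to compare; your opening sentence (defer to Rubin with Bley's correction) is exactly what the paper does, and your sketch is supplying the argument the paper deliberately delegates.

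That sketch is a faithful outline of Rubin's computation and the essential ingredients are all present: for (i), the unit property of $\alpha^{\sigma}(n,\mathfrak{r})$ together with unramifiedness of $\mathfrak{q}$ in $\L_n(\mathfrak{r})/\L_n$; for (ii)--(iii), the telescoping identity $(\tau_{\mathfrak{q}}-1)D_{\mathfrak{q}}=2^l-N_{\mathfrak{q}}$, the norm relation $\alpha^{\sigma}(n,\mathfrak{r})^{N_{\mathfrak{q}}}=\alpha^{\sigma}(n,\mathfrak{s})^{\mathrm{Frob}_{\mathfrak{q}}-1}$, total ramification of $\tilde{\mathfrak{Q}}/\mathfrak{Q}$ of degree $2^l$ from Lemma \ref{field}, and finally the congruence axiom (iii) of the Euler system to identify the residue class, which is how Rubin (and Bley) reduce the claim to the very definition of $\varphi_{\mathfrak{q}}$. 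Two refinements are worth recording. First, your dismissal of $\mathfrak{r}=(1)$ in part (i) as ``nothing to prove'' is not quite right in general: the axioms allow $\alpha^{\sigma}(n,(1))$ to be a non-unit, so $[\kappa((1))]_{\mathfrak{q}}=0$ needs an argument (it is automatic in this paper because $\alpha^{\sigma}(n,(1))$ is always an elliptic unit, but that is a feature of the construction rather than of the proposition as stated). Second, your explanation of the extra hypothesis in (iii) slightly misplaces where it bites: it is not merely needed ``to run the same argument without loss'' at the level of valuations, but is required already to make the right-hand side $\varphi_{\mathfrak{q}}(\kappa((1)))$ meaningful, since $\varphi_{\mathfrak{q}}$ is only extended from residues to elements of $\ker([\cdot]_{\mathfrak{q}})$. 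With those two points made precise the remainder of your outline matches the argument in the cited references.
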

Let $y$ be any element in the kernel of $[\cdot]$. Then $y=\mathfrak{B}^{2^l}\mathfrak{C}$, where $\mathfrak{B}$ is an ideal only divisible by primes above $\q$ and $\mathfrak{C}$ is corpime to $\q$. Let $(\beta)=\mathfrak{B}\mathfrak{D}$ for some ideal $\mathfrak{D}$ coprime to $\q$. Then $y=\beta^{2^l}u$ and $u$ is coprime to $\q$. 
In particular, $u$ is a unit  at all ideals above $\q$. Thus, $\varphi_{\q}(u)$ is well defined and we can extend $\varphi_{\q}$ on $\ker([\cdot]_{\q})$.

\subsection{An Application of Tchebotarev's Theorem}
This section follows ideas of Bley in \cite{BL} and of Greither in \cite{greither}. As some steps of the proofs are slightly different for the case $p=2$ we will carry them out in detail.
The main goal of this section is to prove the following Theorem.
\begin{theorem}
\label{tchebotarev}
Let $\M=\L_n$ for some $n$ and write $G=\Gal(\M/\K)$. Assume that $\overline{\p}^c$ is the precise power of $\overline{\p}$ dividing the conductor of the extension $\M/\K$. Let $M=2^l$ for some $l$ and let $W\subset \M^{\times}/(\M^{\times})^M$ be a finite $\Z[G]$-module. Assume that there is a $\Z[G]$- homomorphism $\psi\colon W\to \Z/M\Z[G]$. Let $C\in A(\M)$ be an arbitrary ideal class. Then there are infinitely many primes $\mathfrak{Q}$ in $\M$ satisfying:
\begin{itemize}
\item[i)]$[\mathfrak{Q}]=2^{3c+4}C$.
\item[ii)] If $\mathfrak{q}=\mathfrak{Q}\cap\K$ then $N\mathfrak{q}\equiv 1\mod 2M$ and $\mathfrak{q}$ is totally split in $\M$.
\item[iii)] For all $w\in W$ one has $[w]_{\mathfrak{q}}=0$ and there exists a unit $u$ in $\Z/M\Z$ such that $\varphi_{\mathfrak{q}}(w)=2^{3c+4}u\psi(w)\mathfrak{Q}$. 
\end{itemize} 
 \end{theorem}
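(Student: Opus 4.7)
The strategy is a Tchebotarev density argument on a single large number field $\mathfrak{N}/\K$ that simultaneously controls all three conditions. I would let $\mathcal{H}$ denote the Hilbert class field of $\M$, set $B=\M\bigl(W^{1/M}\bigr)$ to be the Kummer extension obtained by adjoining $M$-th roots of lifts to $\M^\times$ of a finite generating set of $W$, and form
\[
\mathfrak{N} \;=\; B\cdot\mathcal{H}\cdot\M(\mu_{2M}).
\]
Any $\sigma\in\Gal(\mathfrak{N}/\K)$ with $\sigma|_{\M}=1$ decomposes under restriction into (i) an element of $\Gal(\mathcal{H}/\M)$ corresponding under Artin reciprocity to an ideal class of $\M$, (ii) an element of $\Gal(\M(\mu_{2M})/\M)$ which determines $N\mathfrak{q}\bmod 2M$ for $\mathfrak{q}=\mathfrak{Q}\cap\K$, and (iii) via Kummer theory over $\M(\mu_{2M})$, a homomorphism $W\to\mu_M$ computing the maps $\varphi_\mathfrak{q}$ of condition iii).

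Given $C$ and $\psi$, I would next construct a target element of $\Gal(\mathfrak{N}/\K)$ by prescribing: trivial on $\M(\mu_{2M})$ (to force $N\mathfrak{q}\equiv 1 \pmod{2M}$ and complete splitting in $\M$); the Artin symbol of any ideal in the class $2^{3c+4}C$ on $\mathcal{H}$; and on the maximal Kummer subextension of $\mathfrak{N}/\M(\mu_{2M})$, the character corresponding to $2^{3c+4}u\psi$ for a unit $u\in(\Z/M\Z)^{\times}$. Once such a compatible triple is shown to lift to a single Frobenius class, Tchebotarev supplies infinitely many primes $\mathfrak{Q}$ realising it, and unwinding the definitions of $l_\mathfrak{Q}$ and $\varphi_\mathfrak{q}$ from Proposition \ref{kappa-properties} together with Artin reciprocity gives the three stated conclusions with the same uniform $u$.

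The main obstacle, and the reason for the specific power $2^{3c+4}$, is that the three factors $B$, $\mathcal{H}$ and $\M(\mu_{2M})$ are in general not linearly disjoint over $\M$. Because $p=2$ and $\mu_2\subset\M$, the Kummer radical can meet $\mathcal{H}$ in a genus-theoretic subfield, and the conductor of $\M/\K$ having $\overline{\p}$-part exactly $\overline{\p}^c$ forces an intersection between $\mathcal{H}$ and $\M(\mu_{2M})$ whose order is controlled by $c$. The technical heart of the argument is therefore to estimate the orders of these intersection groups and to verify that multiplication by $2^{3c+4}$ lands both the target ideal class $2^{3c+4}C$ and the target Kummer character $2^{3c+4}u\psi$ in the subgroups on which a compatible global $\sigma$ actually exists. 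Mirroring the bookkeeping of Bley and Greither but tracking the additional $2$-primary subtleties is where the exponent $3c+4$ is pinned down; once it is verified, the Tchebotarev step itself is standard.
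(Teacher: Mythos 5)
Your outline takes the same route as the paper: form the compositum of the Hilbert class field $\H$, the cyclotomic extension $\M' = \M(\zeta_{2M})$, and the Kummer extension $\M'' = \M'(W^{1/M})$, prescribe a Frobenius target from $C$ and $\psi$ on this compositum, and invoke Tchebotarev. You also correctly locate the reason for the power $2^{3c+4}$ in the failure of linear disjointness among these three fields. However, the actual derivation of that exponent is the entire content of the proof, and your proposal defers it as ``bookkeeping,'' which is a genuine gap: as written you assert rather than establish that $2^{3c+4}$ is sufficient.

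Concretely, the paper proves three separate bounds, and $3c+4$ is the sum $(c+2)+(2c+2)$ of two of them applied in sequence. First, one must multiply $\iota\circ\psi$ by $2^{c+2}$ to guarantee it lies in the image of the Kummer map $F\colon\Gal(\M''/\M')\to\Hom(W,\mu_M)$; the cokernel of $F$ is controlled by $\ker\bigl(\M^{\times}/(\M^{\times})^{M}\to\M'^{\times}/(\M'^{\times})^{M}\bigr)$, which injects into $H^1(\Gal(\M'/\M),\mu_M)$ and is bounded using the Herbrand quotient together with the bound $\lvert\mu_M\cap\M\rvert\le 2^{c+1}$ (this is where the conductor hypothesis enters). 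Second, to splice the Kummer datum on $\M''$ with the Artin datum on $\H$, one must know that $\Gal(\M''\cap\H/\M)$ is annihilated by $2^{2c+2}$; this is obtained from the equivariant Kummer pairing (the relation $j\sigma j^{-1}=\sigma^r$), the observation that an abelian quotient of $\Gal(\M''/\M)$ must therefore be annihilated by $\gcd(M,r-1)$, and a separate bound $[\H\cap\M':\M]\le 2^{c-1}$ derived from the ramification of $\overline{\p}$ in $\M(\zeta_{2M})/\M$. None of these three estimates appears in your proposal; without them you have no argument that multiplication by $2^{3c+4}$ actually lands both targets in the subgroups admitting a compatible global element, which is precisely the statement you flag as needing verification.
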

 A similar result was also proved by Vigui\'{e} in \cite{vg2} including the case $p=2$. As our result has slighly different assumptions and to carry out the differences for the case $p=2$ we give a complete proof here. 
 The proof of Theorem \ref{tchebotarev} relies on several lemmas which we will prove in the following. We fix the following Notation: Let $\H$ be the Hilbert class field of $\M$ and define $\M'=\M(\zeta_{{2M}})$ and $\M''=\M'(W^{1/M})$. 
 \begin{lemma}
 \label{claima}
 $[\H\cap\M':\M]\le 2^{c-1}$ if $c\ge 1$. The extension $\H\cap \M'/\M$ is trivial if $c=0$.
 \end{lemma}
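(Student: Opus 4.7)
The plan is to bound $[\H\cap \M':\M]$ by controlling the inertia at a prime of $\M$ above $\overline{\p}$ inside $\Gal(\M'/\M)$. Put $F=\H\cap \M'$. Since $F\subseteq \H$, the extension $F/\M$ is unramified at every prime, so for a prime $\overline{\P}$ of $\M$ above $\overline{\p}$ the inertia subgroup $I_{\overline{\P}}\subset \Gal(\M'/\M)$ at a prime of $\M'$ over $\overline{\P}$ lies in $\Gal(\M'/F)$; hence $[F:\M]\le [\Gal(\M'/\M):I_{\overline{\P}}]$.

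I would then pass to the local picture. Set $L=\M_{\overline{\P}}$ and $N=L(\zeta_{2M})=L\cdot \Q_2(\zeta_{2M})$. Because $\Q_2(\zeta_{2M})/\Q_2$ is totally ramified, its residue field is $\F_2$, which is already contained in the residue field of $L$; hence $N$ and $L$ share a residue field, so $N/L$ is totally ramified. Consequently $I_{\overline{\P}}$ coincides with the decomposition group at a prime of $\M'$ above $\overline{\P}$ and has index $[\M':\M]/[N:L]$ in $\Gal(\M'/\M)$. Writing $A=\M\cap \Q(\zeta_{2M})$ and $A_0=L\cap \Q_2(\zeta_{2M})$, the Galois theory of compositum yields $[\M':\M]=[\Q(\zeta_{2M}):A]$ and $[N:L]=[\Q_2(\zeta_{2M}):A_0]$. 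Since $2$ is totally ramified in $\Q(\zeta_{2M})/\Q$, the completion of $\Q(\zeta_{2M})$ at its unique prime above $2$ is $\Q_2(\zeta_{2M})$ and $[\Q(\zeta_{2M}):\Q]=[\Q_2(\zeta_{2M}):\Q_2]$, so the ratio simplifies to $[A_0:\Q_2]/[A:\Q]$.

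Finally I would invoke the conductor hypothesis. By local class field theory, the assumption that the precise $\overline{\p}$-conductor of $\M/\K$ equals $\overline{\p}^c$ translates locally into $L\subseteq \Q_2^{\mathrm{nr}}\cdot \Q_2(\zeta_{2^c})$. Since $A_0$ is totally ramified over $\Q_2$ (being a subfield of $\Q_2(\zeta_{2M})$), it must lie in the maximal totally ramified subextension of $\Q_2^{\mathrm{nr}}\cdot \Q_2(\zeta_{2^c})$, namely $\Q_2(\zeta_{2^c})$; hence $[A_0:\Q_2]\le [\Q_2(\zeta_{2^c}):\Q_2]=2^{c-1}$ for $c\ge 1$, and the bound $[F:\M]\le 2^{c-1}$ follows from $[A:\Q]\ge 1$. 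For $c=0$ the completion $L$ is unramified over $\Q_2$, forcing $A_0=\Q_2$; the ratio then equals $1$, and $F=\M$. The hardest part is the bookkeeping of the local-global comparison, in particular making precise the identifications $\Gal(\M'/\M)\cong \Gal(\Q(\zeta_{2M})/A)$ and $\Gal(N/L)\cong \Gal(\Q_2(\zeta_{2M})/A_0)$ via the unique prime of $\Q(\zeta_{2M})$ above $2$; once these are set up, the conductor bound on $A_0$ is essentially immediate.
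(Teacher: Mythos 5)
Your overall strategy — bound $[\H\cap\M':\M]$ via the inertia subgroup of a prime over $\overline{\p}$ in $\Gal(\M'/\M)$ — is the same as the paper's. However, there is a genuine gap in the local step: you assert that $N=L(\zeta_{2M})$ is totally ramified over $L$ because the residue field of $\Q_2(\zeta_{2M})$ is already $\F_2$. This inference is incorrect: the residue field of a compositum of local fields can be strictly larger than the compositum of their residue fields. Concretely, take $L=\Q_2(\sqrt{10})$, a totally ramified quadratic extension of conductor $2^3$ (so $c=3$). Then $L(\zeta_8)=\Q_2(\sqrt{2},\sqrt{5},i)$ contains $\sqrt{5}$ and hence an unramified quadratic subextension, so $N/L$ is \emph{not} totally ramified. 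Consequently $I_{\overline{\P}}$ is strictly smaller than the decomposition group, and the identity $[\Gal(\M'/\M):I_{\overline{\P}}]=[\M':\M]/[N:L]=[A_0:\Q_2]/[A:\Q]$ fails; in such cases $[A_0:\Q_2]$ underestimates the index and your chain of inequalities no longer bounds $[F:\M]$. (For $c=0$ the assertion is fine, since then $L/\Q_2$ is unramified and hence linearly disjoint from $\Q_2(\zeta_{2M})$ with the residue field genuinely unchanged.)

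The fix is to bound the ramification index directly from the conductor rather than passing through $A_0$: the conductor hypothesis gives $e(L/\Q_2)\le|(\mathcal{O}_{\K}/\overline{\p}^c)^{\times}|=2^{c-1}$, while $e(N/\Q_2)\ge e(\Q_2(\zeta_{2M})/\Q_2)=M$ since $N\supseteq\Q_2(\zeta_{2M})$, so $|I_{\overline{\P}}|=e(N/L)\ge M/2^{c-1}$. Combined with $[\M':\M]\le M$ and the unramifiedness of $F/\M$, this yields $[F:\M]\le[\M':\M]/|I_{\overline{\P}}|\le 2^{c-1}$. This is precisely the paper's argument; your detour through the intersections $A$ and $A_0$ introduces an unjustified and in fact false claim and should be removed.
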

 \begin{proof}
 As $2$ is totally split in $\K/\Q$ the ideal $\overline{\p}$ is totally ramified in $\K(\zeta_{{2M}})/\K$ and the ramification index is $M$. If $c=0$ then $\M/\K$ is unramified at $\overline{\p}$ and $\M'/\M$ is totally ramifed at all primes above $\overline{\p}$. Hence, $\M'\cap \H=\M$ and the claim follows in this case. Assume now that $c\ge 1$, then the ramification index of $\overline{\p}$ in $\M/\K$ is at most $\vert (\mathcal{O}(\K)/\overline{\p}^c)^{\times}\vert$ . Hence, the ramification index of every divisor of $\overline{\p}$ in $\M'/\M$ is at least $M/2^{c-1}$. In particular, $[\M':\M'\cap\H]\ge M/2^{c-1}$. Using that $[\M':\M]\le M$ it follows that $[\H\cap \M':\M]\le 2^{c-1}$. 
 \end{proof}
 \begin{lemma}
 \label{claimb}
 If $c=0$ then the group $\Gal(\M''\cap\H/\M)$ is annihilated by $4$. If $c>1$ then $\Gal(\M''\cap\H/\M)$ is annihilated by $2^{2c}$. In both cases it is annihilated by $2^{2c+2}$.
 \end{lemma}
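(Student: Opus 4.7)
The plan is to filter $F := \M'' \cap \H$ over $\M$ through the intermediate field $\M' \cap \H$, handling the bottom piece with Lemma \ref{claima} and the top piece via Kummer theory. From the short exact sequence
\[ 1 \to \Gal(F/\M' \cap \H) \to \Gal(F/\M) \to \Gal(\M' \cap \H/\M) \to 1, \]
Lemma \ref{claima} already controls the rightmost term (the exponent divides $2^{c-1}$ for $c \ge 1$ and the group is trivial for $c = 0$), so the task reduces to bounding $\exp \Gal(F/\M' \cap \H)$.

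For this kernel, I would use $F \cap \M' = \M' \cap \H$ to get an injection $\Gal(F/\M' \cap \H) \hookrightarrow \Gal(\M''/\M')$ via the translation isomorphism, and then exploit the crucial observation that $F \M'$ is abelian over $\M$: both $F \subset \H$ and $\M' = \M(\zeta_{2M})$ are abelian over $\M$, so their compositum is. Consequently $\Gal(\M'/\M)$ acts trivially by conjugation on the subgroup $\Gal(F\M'/\M') \subset \Gal(\M''/\M')$. By Kummer theory, this conjugation action is given by the mod-$M$ cyclotomic character: for any lift $\tilde{\sigma}$ of $\sigma \in \Gal(\M'/\M)$ and any $\tau \in \Gal(\M''/\M')$ one has $\tilde{\sigma}\, \tau\, \tilde{\sigma}^{-1} = \tau^{\chi(\sigma)}$, where $\chi \colon \Gal(\M'/\M) \to (\Z/M\Z)^{\times}$. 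Triviality therefore forces $\Gal(F\M'/\M')$ to be annihilated by the ideal $I \subset \Z/M\Z$ generated by $\{\chi(\sigma) - 1 : \sigma \in \Gal(\M'/\M)\}$.

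The remaining task is to show that $I$ contains a sufficiently small power of $2$. From the proof of Lemma \ref{claima}, the inertia of $\overline{\p}$ in $\Gal(\M'/\M)$ has order at least $M/2^{c-1}$; pushing through $\chi$ and the surjection $(\Z/2M\Z)^{\times} \twoheadrightarrow (\Z/M\Z)^{\times}$ (whose kernel has order $2$), the image $U := \chi(\Gal(\M'/\M))$ in $(\Z/M\Z)^{\times}$ has index at most $2^{c-1}$. Using the decomposition $(\Z/2^l\Z)^{\times} = \langle -1\rangle \times \langle 5\rangle$ (for $l \ge 3$) together with the lifting the exponent identity $v_2(5^n - 1) = 2 + v_2(n)$, I would split into two cases: either $U$ contains some element of the form $-5^k$, in which case $v_2(-5^k - 1) = 1$ gives $(2) \subset I$; or $U \subset \langle 5\rangle$, in which case the size constraint forces $U \supset \langle 5^{2^{c-2}}\rangle$ and hence $(2^c) \subset I$. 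In either case the exponent of $\Gal(F\M'/\M')$ divides $2^c$ (with the easy refinement that it divides $2$ when $c = 0$).

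Multiplying the two exponent bounds yields $\exp \Gal(F/\M) \mid 2^c \cdot 2^{c-1} = 2^{2c-1}$ for $c \ge 2$, which implies the sharper stated bound $2^{2c}$; and $\exp \Gal(F/\M) \mid 2$ for $c \in \{0,1\}$, which is stronger than the asserted $2^{2c+2}$ bound. The main obstacle will be the case analysis for the subgroup $U$: since $(\Z/2^l\Z)^{\times}$ is non-cyclic for $l \ge 3$, in contrast to the odd-$p$ situation treated in \cite{BL}, one must carefully verify that the worst case is indeed $U \subset \langle 5\rangle$, and the sharp form of the $p = 2$ lifting the exponent identity is essential to pin down the exponent $c$ rather than something larger.
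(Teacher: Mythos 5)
Your proposal is correct and follows essentially the same strategy as the paper: both filter $\Gal(\M''\cap\H/\M)$ through the middle layer $\M'\cap\H$, control the bottom factor by Lemma \ref{claima}, and control the top factor by noting that the relevant compositum is abelian over $\M$, so that the (Kummer-equivariant) conjugation action of $\Gal(\M'/\M)$ on the upper Galois group is trivial, forcing annihilation by the elements $\chi_M(\sigma)-1$ of $\Z/M\Z$. The only substantive divergence is how that annihilator is estimated: the paper fixes a single $j\in\Gal(\M'/\M)$ of order $M/2^c$, sets $2^d=\gcd(M,r-1)$ for $j(\zeta_{2M})=\zeta_{2M}^r$, and bounds $d\le c+1$ by the inductive squaring step $r^{2^{v-d}}\equiv 1\ (\mathrm{mod}\ 2^v)\Rightarrow r^{2^{v+1-d}}\equiv 1\ (\mathrm{mod}\ 2^{v+1})$; you instead analyze the full image $U$ of $\Gal(\M'/\M)$ in $(\Z/M\Z)^{\times}$ using the decomposition $\langle -1\rangle\times\langle 5\rangle$ and the $p=2$ lifting-the-exponent identity $v_2(5^n-1)=2+v_2(n)$, arriving at the marginally sharper annihilator $2^c$ in the worst case (and $2$ when $U$ meets the $-1$ coset). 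Either computation stays within the bounds asserted in the lemma, so the argument is sound. One small wording correction: $\Gal(F\M'/\M')$ is a quotient, not a subgroup, of $\Gal(\M''/\M')$; the conjugation action nevertheless descends to this quotient because $F\M'/\M$ is Galois, so your reasoning is unaffected.
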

 \begin{proof}
 By definition we have $[\K(\zeta_{2M}):\M\cap\K(\zeta_{2M})]\ge \textup{min}(M,M/2^{c-1})$. Consider first the case $c\ge 1$. As $\Gal(\K(\zeta_{2M})/\K)\cong \Z/2\Z\times \Z/(M/2)\Z$ we can choose an element $j$ in $\Gal(\K(\zeta_{2M})/\M\cap\K(\zeta_{2M}))$ of order $M/2^c$. Choose $r\in \Z$ such that $j(\zeta_{2M})=\zeta_{2M}^r$. It follows that $r^{M/2^c}\equiv 1\mod 2M$ and $r^b\not\equiv 1 \mod 2M$ for every $0<b<M/2^c$. The element $j$ has a lift to $\Gal(\M'/\M)$ of the same order. Let $\sigma$ be in $\Gal(\M''/\M')$ and $\alpha$ in $\M''$ such that $\alpha^{M}=w$ generates a maximal cyclic subgroup of $W$. By Kummer Theory there exists an even integer $t_w$ such that $\sigma(\alpha)=\zeta_{2M}^{t_w}\alpha$. 
Using the equivariant Kummer-pairing one can show 
 \begin{equation}
 \label{konjugation}
 j\sigma j^{-1}=\sigma^{r}.\end{equation}
 The extension $(\M''\cap\H\M')/\M$ is clearly abelian. Hence $\Gal(\M'/\M)$ acts trivially on the group $H=\Gal(\M''\cap\H\M'/\M')$. Together with \eqref{konjugation} this implies that $H$ is annihilated by $r-1$. On the other hand it is a Kummer extension of exponent at most $M$. Therefore, $H$ is annihilated by $2^d=\gcd(M,r-1)$. Then $r\equiv 1\mod 2^d$. Assume now that $r^{2^{v-d}}\equiv 1\mod 2^v$ for some $v\ge d$. Then $r^{2^{v+1-d}}\equiv 1\mod 2^{v+1}$. This shows that $r^{2^{l+1-d}}\equiv 1\mod 2^{l+1}$. Recall that $M=2^l$ and that $r^b\not\equiv 1\mod 2M$ for all $0<b<M/2^c$. It follows that $M/2^c\mid 2M/2^d$ and $c\ge d-1$. Therefore $2^{c+1}$ annihilates $H$. There is a natural surjective projection \[H\to \Gal(\M''\cap \H/\M'\cap \H).\] Using Lemma \ref{claima} this gives the claim in the case $c\neq 0$.
 
 \smallskip In the case $c=0$ we choose $j$ of order $M/2$. Then $r^{M/2}\equiv 1\mod 2M$ and $r^b\not\equiv 1 \mod 2M$ for all $0<b<M/2$. Using the same aguments as for the case $c=1$ we obtain that the extension $\M''\cap\H\M'/\M'$ is annihilated by $4$. This implies the claim in the case $c=0$.
 \end{proof}
 Using the Kummer pairing we see that there is a homomorphism \[F:\Gal(\M''/\M')\to \textup{Hom}(W,\zeta_{M})\] given by $F(\sigma)(w)=\sigma(w^{1/M})/w^{1/M}$. 
 \begin{lemma}
 \label{cohomology}
 $2^{c+2}$ annihilates the cokernel of $F$.
 \end{lemma}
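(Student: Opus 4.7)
The map $F$ can be interpreted via Kummer theory as the inclusion induced by the surjection $W\twoheadrightarrow W'$, where $W'$ denotes the image of $W$ in $\M'^{\times}/(\M'^{\times})^M$: since $\mu_M\subset \M'$, the Kummer pairing identifies $\Gal(\M''/\M')$ with $\mathrm{Hom}(W',\mu_M)$, and under this identification $F$ is the natural map $\mathrm{Hom}(W',\mu_M)\hookrightarrow \mathrm{Hom}(W,\mu_M)$. Applying $\mathrm{Hom}(-,\mu_M)$ to the exact sequence $0\to K\to W\to W'\to 0$ with $K=\ker(W\to W')$ shows that $\mathrm{coker}(F)$ injects into $\mathrm{Hom}(K,\mu_M)$, so it suffices to prove that $2^{c+2}$ annihilates $K$ as an abelian group.

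Setting $G=\Gal(\M'/\M)$, one obtains an injection $K\hookrightarrow H^1(G,\mu_M)$ as follows: to $w\in K$ with chosen $u\in\M'^{\times}$ satisfying $u^M=w$, associate the $1$-cocycle $\chi_u(\sigma)=\sigma(u)/u\in\mu_M$. Different choices of $u$ differ by an element of $\mu_M$ and alter $\chi_u$ only by a coboundary; conversely, if $[\chi_u]=0$ then $u$ can be adjusted by a root of unity to lie in $\M^{\times}$, forcing $w\in (\M^{\times})^M$ to be trivial in $W$. Thus the problem reduces to showing that $H^1(G,\mu_M)$ has exponent dividing $2^{c+2}$.

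For this last step, let $H=\langle j\rangle$ with $j\in G$ the element constructed in the proof of Lemma \ref{claimb}: its order is $M/2^c$ when $c\geq 1$ and $M/2$ when $c=0$, so $[G:H]\leq 2^c$ (resp.\ $\leq 2$). The identity $\mathrm{cor}\circ\mathrm{res}=[G:H]$ implies that $[G:H]$ annihilates the kernel of the restriction $H^1(G,\mu_M)\to H^1(H,\mu_M)$, so it remains to bound the exponent of the latter. For the cyclic group $H$ acting via $j(\zeta)=\zeta^r$, $H^1(H,\mu_M)$ equals $\ker(N_H)/(j-1)\mu_M$; combining the valuation bound $v_2(r-1)\leq c+1$ furnished by Lemma \ref{claimb} with the lifting-the-exponent lemma applied to the minimal relation $r^{|H|}\equiv 1\pmod{2M}$ allows one to compute explicitly the cardinalities of both $\ker(N_H)$ and $(j-1)\mu_M$, yielding $H^1(H,\mu_M)\cong \Z/2\Z$ in every case. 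Hence $\exp H^1(G,\mu_M)$ divides $[G:H]\cdot 2\leq 2^{c+2}$, completing the proof.

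The main technical point is the explicit cyclic-group cohomology computation; however, the required $2$-adic bookkeeping has already been carried out in the proof of Lemma \ref{claimb}, so no new input beyond that is needed.
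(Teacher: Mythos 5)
Your proof is correct and its first half — identifying $\mathrm{coker}(F)$ with a subgroup of $\mathrm{Hom}(\ker(f),\mu_M)$ via the short exact sequence $0\to\ker(f)\to W\to W'\to 0$, and then embedding $\ker(f)$ into $H^1(\Gal(\M'/\M),\mu_M)$ by the cocycle $g\mapsto g(v)/v$ — is exactly the paper's argument. Where you diverge is in the final step of bounding $H^1(\Gal(\M'/\M),\mu_M)$. The paper splits into cases: if $\Gal(\M'/\M)$ is cyclic it uses triviality of the Herbrand quotient to reduce to $H^0$, which it bounds by $\vert\mu_M\cap\M\vert\leq 2^{c+1}$; if not, it decomposes $\Gal(\M'/\M)\cong\Delta\times C_r$ with $\Delta\cong\Z/2\Z$ and uses an inflation–restriction sequence. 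You instead treat all cases uniformly: restrict to the cyclic subgroup $H=\langle j\rangle$ already singled out in Lemma~\ref{claimb}, note that $\mathrm{cor}\circ\mathrm{res}=[G:H]$ kills $\ker(\mathrm{res})$, and compute $H^1(H,\mu_M)$ explicitly. The computation $H^1(H,\mu_M)\cong\Z/2\Z$ does hold in every subcase (whether $v_2(r-1)\geq 2$ or $=1$), by the lifting-the-exponent bookkeeping you cite, and with $[G:H]\leq 2^c$ (resp.\ $\leq 2$ when $c=0$) you get the annihilator $[G:H]\cdot 2\leq 2^{c+2}$. This is a cleaner route because it eliminates the cyclic/non-cyclic dichotomy; note also that for $c\geq 1$ your argument actually yields the slightly sharper bound $2^{c+1}$, though stating $2^{c+2}$ is of course still correct and matches what the lemma asserts.
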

 \begin{proof}
 Let $W'$ be the image of $W$ in $\M'/(\M')^{M}$. By Kummer duality  we have $\textup{Hom}(W',\langle\zeta_M\rangle)\cong \Gal(\M''/\M')$. Let $f\colon (\M^{\times})/(\M^{\times})^M\to (\M'^{\times})/(\M'^{\times})^M$ be the natural map. Using the exact sequence
 \[0\to \ker(f)\to W\to W'\to 0\] we obtain a second exact sequence \[\textup{Hom}(W',\langle\zeta_M\rangle)\to\textup{Hom}(W,\langle\zeta_M\rangle)\to\textup{Hom}(\ker(f),\langle\zeta_M\rangle).\] Hence, to prove the lemma it suffices to prove that the kernel of $f$ is annihilated by $2^{c+2}$. Let $u\in \Ker(f)$ and choose an element $v\in\M'$ such that $u=v^M$. We define $\delta_v:\Gal(\M'/\M)\to \langle \zeta_M\rangle$ by $\delta_v(g)=g(v)/v$. As \[\delta_v(gh)=gh(v)/g(v)\cdot g(v)/v=\delta_v(g)\cdot g\delta_v(h).\] It follows that $\delta_v$ is a cocycle. Note that $v$ is unique up to $M$-th roots of unity. If we cchoose $v'=v\zeta^c_M$ we obtain $\delta_{v'}(g)=g(v)/v\cdot g(\zeta_M^c)/\zeta_M^c$. Hence, $\delta_v$ is uniquely defined up to coboundaries and $\delta_v$ has a well defined image in $H^1(\Gal(\M'/\M),\langle\zeta_M\rangle)$. Thus, we have an injective map $\ker(f)\hookrightarrow H^1(\Gal(\M'/\M),\langle\zeta_M\rangle)$. Therefore it suffices to bound $ H^1(\Gal(\M'/\M),\langle\zeta_M\rangle)$. If the group $\Gal(\M'/\M)$ is cyclic we see that $\langle\zeta_M\rangle$ has a trivial Herbrandt quotient. So it suffices to consider \[\vert H^0(\Gal(\M'/\M),\langle \zeta_M\rangle)\vert\le \vert \langle\zeta_M\rangle\cap \M\vert\le 2^{c+1}.\]
 If $\Gal(\M''/\M)$ is not cyclic then it is isomorphic to $\Delta\times C_r$ where $C_r$ is cyclic and $\Delta\cong \Z/2\Z$. Using the exact sequence
 \[H^1(\Delta,\langle \zeta_4\rangle)\to H^1(\Gal(\M'/\M),\langle\zeta_M\rangle)\to H^1(C_r,\langle\zeta_M\rangle)\]
 and the fact that the last term is annihilated by $2^{c+1}$ while the first one is annihilted by $2$ we obtain that the middle term is annihilated by $2^{c+2}$ proving the lemma.
 \end{proof}

 Now we have all ingredients to prove Theorem \ref{tchebotarev}. 
 \begin{proof}[Proof of Theorem \ref{tchebotarev}]
 Consider the map $\iota:(\Z/M\Z)[G]\to \langle \zeta_M\rangle$ defined by $\sum a_{\sigma}\sigma\to \zeta_M^{a_1}$. Then $\iota\circ \psi\in \textup{Hom}(W,\langle\zeta_M\rangle)$. Using Lemma \ref{cohomology} we see that $2^{c+2}(\iota\circ \psi)$ has a preimage $\gamma$ in $
 \Gal(\M''/\M')$. Let $\gamma_1=2^{c+2}\left(\frac{C}{\H/\M}\right)$ and choose $\delta\in \Gal(\M''\H/\M)$ such that $\delta\mid_{\H}=2^{2c+2}\gamma_1$ and $\delta\mid_{\M''}=2^{2c+2}\gamma$. Note that this is possible as $\Gal(\M''\cap\H/\M)$ is annihilated by $2^{2c+2}$ due to Lemma \ref{claimb}. Using Tchebotarev's Theorem we can find infinitely many primes $\mathfrak{Q}\in \M$ of degree $1$ such that \[\left(\frac{\mathfrak{Q}}{\M''\H/\M}\right)=\textup{conjugacy class of $\delta$}.\]
 As $\delta\mid_{\M'}=2^{2c+2}\gamma\mid_{\M'}=\textup{id}$ we see that $\mathfrak{Q}$ is totally split in $\M'/\M$. Let $\q=\mathfrak{Q}\cap \K$. Then $\q$ is totally split in $\M'/\K$ and $N\q\equiv 1\mod 2M$.
 Further $\delta\mid_{\H}=2^{2c+2}\gamma_1\mid_{\H}=2^{3c+4}\left(\frac{C}{\H/\M}\right)$. It follows that $[\mathfrak{Q}]=2^{3c+4}C$.
 It remains to prove point iii) of the Theorem. To do so we note that \[\ord_{\mathfrak{Q}}(2^{3c+4}\psi(w)\mathfrak{Q})\equiv 0\mod M
 \Leftrightarrow 2^{3c+4}\iota\circ \psi(w)=1.\] Using that $\gamma$ is the preimage of $2^{c+2}\iota\circ \psi$ we see that \[2^{3c+4}\iota\circ \psi(w)=1 \Leftrightarrow (2^{2c+2}\gamma)w^{1/M}/w^{1/M}=1.\] As $\mathfrak{Q}$ has Artin-symbol $2^{2c+2}\gamma$ in $\M''/\M$ we see that 
 \[\ord_{\mathfrak{Q}}(2^{3c+4}\psi(w)\mathfrak{Q})\equiv 0\mod M\Leftrightarrow  w \textup{ is an } M\textup{-th power modulo }\mathfrak{Q}.\]  $w$ is an $M$-th power in $\M''$ and $\mathfrak{Q}$ is not ramified in $\M''/\M$. Therefore, $[(w)]_{\q}=0$. By definition $\varphi_{\q}(w)=0\Leftrightarrow w \textup{ is an } M \textup{-th power modulo } \mathfrak{Q}$. It follows that $\ord_{\mathfrak{Q}}(2^{3c+4}\psi(w)\mathfrak{Q})=u'\ord_{\mathfrak{Q}}(\varphi_{\q}(w))$ for some unit $u'$. From this the claim follows as in \cite[page 403]{lang}.
 \end{proof}
 
\subsection{$\chi$-components on the class group and on $E/C$} 
\label{sec;components}
Recall that we fixed a decomposition $\Gal(\L_{\infty}/\K)\cong \Gamma'\times H$ with $H=\Gal(\L_{\infty}/\K_{\infty})$. Let $\gamma'$ be a topological generator of $\Gamma'$. To simplify notation we will use the notation $\gamma'_n$ for the element $\gamma'^{2^n}$. Let $\Gamma=\Gal(\L_{\infty}/\L)$. There exists a power of $2$ such that $\Gamma'^{2^m}$ is contained in $ \Gal(\L_{\infty}/\K(\f\p^2))$. In particular $\L_{\infty}/\L_{\infty}^{\Gamma'^{2^m}}$ is totally ramified at all primes above $\p$ and ${\Gamma'^{2^{m+n}}}={\Gamma^{2^{m'+n}}}$ for some $m'\le m$ independent of $n$. Recall that $A_n$ denotes the class group of $\L_n$, i. e. $\gamma'^{2^{m+n-m'}}$ acts trivial on $A_n$ and $\L_n$ for $n\ge m'$. We fixed the notations $\Lambda=\lim_{\infty\leftarrow  n}\Z_p[[\Gal(\L_n/\K)]]$ and $A_{\infty}=\lim_{\infty\leftarrow n}A_n$. Let $\chi$ be a character of $H$. Then $A_{\infty,\chi}$ and $(\overline{E}/\overline{C})_{\chi}$ are $\Lambda_{\chi}$-modules. Let $\Lambda_{\chi,n}$ be the quotient of $\Lambda_{\chi}$ by  $1-\gamma'_n$.
In particular, there is a pseudo-isomorphism
\begin{equation}\label{pseudo}
A_{\infty,\chi}\sim\bigoplus_{i=1}^k\Lambda_{\chi}/g_i,
\end{equation}
with finite kernel and cokernel.
\begin{lemma}
\label{leo}
The kernel of the multiplication by $(1-\gamma'_n)$ on $A_{\infty}$ is finite for every $n$.
\end{lemma}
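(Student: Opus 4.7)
The plan is to apply the structure theorem for finitely generated torsion $\Lambda$-modules and reduce the claim to finiteness of the cokernel $A_\infty/(1-\gamma'_n)A_\infty$, from which finiteness of the kernel follows by a formal UFD calculation.

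First, $A_\infty$ is a finitely generated torsion $\Lambda$-module, so the structure theorem gives a pseudo-isomorphism $A_\infty \sim \bigoplus_i \Lambda/(g_i)$ in which each $g_i$ is a power of $2$ or of an irreducible distinguished polynomial in $T=\gamma'-1$. Since pseudo-null $\Lambda$-modules are finite, it is enough to analyze each summand. For $\Lambda/(g)$ with $g \neq 0$ and $h \in \Lambda$, a direct calculation shows that the kernel of multiplication by $h$ is isomorphic to $\Lambda/\gcd(g,h)$ and that the cokernel is $\Lambda/(g,h)$; both are finite exactly when $g$ and $h$ are coprime (here using that $\Lambda$ is a two-dimensional regular local ring, so the zero locus of a coprime pair is supported at the closed point). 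Setting $h = 1-\gamma'_n = -((1+T)^{2^n}-1)$, the finiteness of the kernel on $A_\infty$ is therefore equivalent to finiteness of the cokernel $A_\infty/(1-\gamma'_n)A_\infty$.

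To show finiteness of this cokernel, I would apply the standard control theorem to the $\Z_2$-tower $\L_\infty/\L_\infty^{\Gamma'^{2^n}}$. By the analysis preceding the lemma, for $n$ sufficiently large this tower is totally ramified at each prime above $\p$ and unramified elsewhere, so the control theorem yields a surjection $A_\infty/(1-\gamma'_n)A_\infty \twoheadrightarrow A_{n+m'-m}$ whose kernel is controlled by the usual genus-theoretic ramification contribution and is therefore finite. Since $A_{n+m'-m}$ is a finite abelian group, so is the cokernel. The case of small $n$ reduces to the large $n$ case via the inclusion $A_\infty[1-\gamma'_n] \subseteq A_\infty[1-\gamma'_N]$ for $N \ge n$, which holds because $1-\gamma'_n$ divides $1-\gamma'_N = 1 - (\gamma'_n)^{2^{N-n}}$ in $\Z[\gamma'_n]$.

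The main obstacle is the control theorem in the third step: it requires carefully tracking the ramification at primes above $\p$ in the tower $\L_\infty/\L_\infty^{\Gamma'^{2^n}}$ and the correspondence between $\Gamma'$-powers and $\Gamma$-powers from the paragraph preceding the lemma. The remainder of the argument is a formal manipulation with the structure theorem and a coprimality check in a UFD.
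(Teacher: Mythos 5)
Your first step---reducing the finiteness of the kernel to that of the cokernel $A_\infty/(1-\gamma'_n)A_\infty$ via the structure theorem over $\Z_2[[T]]$, together with the divisibility $(1-\gamma'_n)\mid(1-\gamma'_N)$ to handle small $n$---is correct. The gap is in the second step: the control theorem does not by itself give that this cokernel is finite. Let $e$ be a level beyond which the tower is totally ramified at the $s$ primes above $\p$ in $\L_e$. Iwasawa's exact control (Washington, Thm.~13.13 and Lem.~13.15) gives $A_{n'}\cong A_\infty/\nu_{n,e}Y_e$ where $Y_e=\omega_e A_\infty+\langle\sigma_2-1,\dots,\sigma_s-1\rangle$ and the $\sigma_i$ are inertia generators. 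Hence the kernel of your surjection $A_\infty/\omega_n A_\infty\twoheadrightarrow A_{n'}$ is a quotient of $Y_e/\omega_e A_\infty$, i.e.\ of the submodule of $A_\infty/\omega_e A_\infty$ generated by the images of $\sigma_2-1,\dots,\sigma_s-1$. There is no a priori bound on the order of these elements; finiteness of this kernel is exactly as hard as finiteness of $A_\infty/\omega_e A_\infty$ itself, so the reasoning is circular and ``the usual genus-theoretic ramification contribution'' does not close it. (When $s=1$ the extra generators vanish, $Y_e=\omega_e A_\infty$, and your argument does go through; but $\p$ need not be totally ramified in $\L/\K$, so $s>1$ can occur.)

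The paper closes this gap with a genuine arithmetic input absent from your proposal: Leopoldt's conjecture, which is a theorem (Brumer) for the fields $\L_n$ since they are abelian over the imaginary quadratic field $\K$. Following Greither (cited in the paper), Leopoldt for $\L_n$ forces the $\Gamma$-coinvariants of the $\p$-ramified Iwasawa module $X=\Gal(\Omega/\L_\infty)$ to be finite; since $A_\infty$ is a quotient of $X$, the finiteness of $A_\infty/(1-\gamma'_n)A_\infty$ follows, and with it that of the kernel. This is not a formal consequence of the control theorem but a substantive transcendence result, and it is the missing ingredient.
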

\begin{proof}
This follows directly from the fact that all finite subextension of $\L_{\infty}/\K$ are abelian over $\K$ and that the Leopoldt conjecture holds for any abelian extensions of imaginary quadratic fields. In particular, Leopoldt's conjecture holds for every field $\L_n$ (see \cite{greither} for more details).
\end{proof}
\begin{lemma}
\label{pseudoA}
Let $\chi$ be a character of $H$ and $n\ge m'$. Then there is a $\Lambda_{\chi,n}$ homomorphism 
\[A_{\chi,n}\to\bigoplus_{i=1}^k\Lambda_{\chi,m+n-m'}/(\overline{g}_i),\]
with uniformly bounded cokernel.
Here, $\overline{g_i}$ is the restriction of $g_i$ to level $n$. 
\end{lemma}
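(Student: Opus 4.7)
The plan is to derive the finite-level statement from the global pseudo-isomorphism (\ref{pseudo}) by reducing modulo $\omega_n:=1-\gamma'_{m+n-m'}$ and combining the result with an Iwasawa control-theorem identification of $A_{\chi,n}$ with the coinvariants $A_{\infty,\chi}/\omega_n A_{\infty,\chi}$. Since $n\ge m'$, the element $\omega_n$ annihilates $A_n$, so both $A_{\chi,n}$ and $\bigoplus_{i=1}^k\Lambda_{\chi,m+n-m'}/(\overline{g}_i)$ are naturally $\Lambda_{\chi,m+n-m'}$-modules (and \emph{a fortiori} $\Lambda_{\chi,n}$-modules).

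First I would apply the functor $-\otimes_{\Lambda_\chi}\Lambda_{\chi,m+n-m'}$ to the four-term exact sequence
\[0\to K\to A_{\infty,\chi}\xrightarrow{\phi}\bigoplus_{i=1}^k\Lambda_\chi/(g_i)\to C\to 0\]
supplied by (\ref{pseudo}), in which $K$ and $C$ are finite. Splitting it into two short exact sequences and invoking the associated $\mathrm{Tor}$ long exact sequences, the finiteness of $K$ and $C$ ensures that every error term that appears (namely the $\omega_n$-torsion and the $\omega_n$-coinvariants of $K$ and $C$) is a finite group of order at most $\max(|K|,|C|)$, uniformly in $n$. A diagram chase then produces an induced $\Lambda_{\chi,m+n-m'}$-homomorphism
\[\overline{\phi}_n\colon A_{\infty,\chi}/\omega_n A_{\infty,\chi}\to\bigoplus_{i=1}^k\Lambda_{\chi,m+n-m'}/(\overline{g}_i)\]
whose kernel and cokernel are of uniformly bounded finite order.

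Next I would identify the source of $\overline{\phi}_n$ with $A_{\chi,n}$ up to a uniformly bounded finite error. Since $\omega_n$ annihilates $A_n$, the natural projection from the inverse limit factors as $A_{\infty,\chi}\twoheadrightarrow A_{\infty,\chi}/\omega_n A_{\infty,\chi}\xrightarrow{\pi_n}A_{\chi,n}$. Because $\L_\infty/\L_n$ is totally ramified at every prime above $\p$ and unramified elsewhere (this is precisely the role of the hypothesis $n\ge m'$), Iwasawa's classical control theorem implies that $\pi_n$ is surjective and that $\ker\pi_n$ is a subquotient of the $\omega_n$-torsion of $A_{\infty,\chi}$; Lemma \ref{leo} then bounds this torsion uniformly in $n$.

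Finally, to produce a map in the direction demanded by the statement, I would exploit the symmetry of pseudo-isomorphism on finitely generated torsion $\Lambda_\chi$-modules: there is also a pseudo-inverse $\psi\colon\bigoplus_i\Lambda_\chi/(g_i)\to A_{\infty,\chi}$ with finite kernel and cokernel. Applying the snake-lemma reduction of step one to $\psi$ and composing with $\pi_n$ yields a $\Lambda_{\chi,m+n-m'}$-homomorphism $\bigoplus_i\Lambda_{\chi,m+n-m'}/(\overline{g}_i)\to A_{\chi,n}$ with uniformly bounded kernel and cokernel; inverting this finite-level pseudo-isomorphism in the abelian category of finitely generated $\Lambda_{\chi,m+n-m'}$-modules modulo finite ones produces the desired map $A_{\chi,n}\to\bigoplus_i\Lambda_{\chi,m+n-m'}/(\overline{g}_i)$. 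The main technical obstacle is to verify that the compound effect of the four sources of finite error — the two global pseudo-isomorphisms $\phi$ and $\psi$, the $\mathrm{Tor}$ contributions from $K$ and $C$, and the control-theorem kernel $\ker\pi_n$ — remains bounded uniformly in $n$; this reduces to the observation that each is a fixed finite $\Lambda_\chi$-module whose reduction modulo $\omega_n$ is again finite of order depending only on the global data.
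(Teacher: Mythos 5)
Your overall strategy — reduce the global pseudo-isomorphism \eqref{pseudo} modulo $\omega_n=1-\gamma'_{m+n-m'}$, glue to the finite level by a control theorem, bound by Lemma~\ref{leo} — is the same as the paper's (and Greither's), and the $\mathrm{Tor}$-based reduction of the four-term sequence is carried out correctly: the error terms you get are subquotients of the \emph{fixed} finite modules $K$ and $C$ and hence uniformly bounded. The problem is in the control-theorem step. You assert that $\ker\pi_n$ is ``a subquotient of the $\omega_n$-torsion of $A_{\infty,\chi}$'' and that ``Lemma~\ref{leo} then bounds this torsion uniformly in $n$.'' Both halves of this sentence are wrong. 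First, Lemma~\ref{leo} gives finiteness of $A_{\infty}[1-\gamma'_n]$ for \emph{each} $n$, not a bound uniform in $n$; for a torsion $\Lambda_{\chi}$-module the size of the $\omega_n$-torsion grows like $p^{\mu p^n+\lambda n+O(1)}$, so ``a subquotient of the $\omega_n$-torsion'' is not the right thing to aim for. Second, this is not what the control theorem says: following Washington (p.~281), one has $A_n\cong A_{\infty}/\nu_{m+n-m',m}Y$ for a \emph{fixed} submodule $Y$, so $\ker\pi_n=\nu_{m+n-m',m}Y/(1-\gamma'_{m+n-m'})A_{\infty}$, and multiplication by $\nu_{m+n-m',m}$ exhibits this as a quotient of the image of $Y$ in $A_{\infty}/(1-\gamma'_{m})A_{\infty}$ — that is, a subquotient of the $\omega_m$-\emph{coinvariants} at the \emph{fixed} level $m$. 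It is that fixed finite group whose order uniformly bounds $\ker\pi_n$, and Lemma~\ref{leo} (via coprimality of $1-\gamma'_m$ with the characteristic ideal) is what makes it finite. As written, your argument would ``prove'' a uniform bound on $|A_{\infty,\chi}[\omega_n]|$, which is false whenever $A_{\infty,\chi}$ has nonzero $\lambda$- or $\mu$-invariant.

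A smaller issue: your final move, ``inverting this finite-level pseudo-isomorphism in the abelian category of finitely generated $\Lambda_{\chi,m+n-m'}$-modules modulo finite ones,'' only yields a map with \emph{finite} cokernel, not one with cokernel bounded uniformly in $n$, and it is the uniformity that the later application (choosing $C_i$ with $t(C_i)=2^{c_3}e_i$) requires. You need to produce the map and the bound simultaneously — e.g.\ by arranging that the pseudo-isomorphism already kills the uniformly bounded $\ker\pi_n$, or by following Greither's Lemma~3.11 directly — rather than appealing to an abstract inversion after the fact. The paper leaves this step terse (``we can deduce the claim from \eqref{pseudo}''), but the point is that all finite errors come from the fixed data $K$, $C$, and $A_{\infty,\chi}/(1-\gamma'_m)A_{\infty,\chi}$, none of which depend on $n$, which is exactly what your appeal to $\omega_n$-torsion obscures.
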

\begin{proof}
This proof is very similar to \cite[Lemma 3.11]{greither}: By \cite[page 281]{washington} the module $A_n$ is isomorphic to $A_{\infty}/\nu_{m+n-m',m}Y$ for some submodule $Y$. Consider the map \[\phi_n\colon A_{\infty}/(1-\gamma'_{m+n-m'})A_{\infty}\to A_n.\] By definition the kernel is isomoprhic to $\nu_{m+n-m',m}Y/(1-\gamma'_{m+n-m'})A_{\infty}$ which is bounded by the size of $Y/(1-\gamma'_{m})A_{\infty}\le A_{\infty}/(1-\gamma'_m)A_{\infty}$. By Lemma \ref{leo}
this quotient is finite and the kernel of $\phi_n$ is uniformly bounded. Thus, the kernel of the natural projections \[A_{\infty,\chi}/(1-\gamma'_{m+n-m'})A_{\infty,\chi}\to A_{\chi,n}\] has a uniformly bounded kernel and we can deduce the claim from \eqref{pseudo}.
\end{proof}
Let $\Gamma'_{n_2,n_1}=\Gamma'^{2^{n_1}}/\Gamma'^{2^{n_2}}$ for $n_2>n_1$. Recall that $\Gamma'^{2^{n}}$ fixes the field $\L_{m'+n-m}$ for $n>m$. Hence $\Gal(\L_{n_2-m+m'}/\L_{n_1-m+m'})=\Gamma'_{n_2,n_1}$.
\begin{lemma}
There is a constant $k$ such that
\label{H0andh1}
 \[\vert (1-\gamma'_m)H^1(\Gamma'_{n_2,n_1},E_{m'+n_2-m})\vert \le 2^k\textup{ and } \vert (1-\gamma'_m)H^0(\Gamma'_{n_2,n_1},E_{m'+n_2-m})\vert \le 2^k\] for any pair $(n_1,n_2)$ with  $n_2>n_1\ge m$.
\end{lemma}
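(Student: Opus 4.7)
The plan is to exploit that the extension $\mathbb{L}_{m'+n_2-m}/\mathbb{L}_{m'+n_1-m}$ is cyclic of $2$-power degree, totally ramified at the (finitely many) primes of $\mathbb{L}$ above $\mathfrak{p}$ and unramified elsewhere; abbreviate $F = \mathbb{L}_{m'+n_2-m}$, $k = \mathbb{L}_{m'+n_1-m}$ and $G = \Gamma'_{n_2,n_1}$. Since $G$ is a finite cyclic $2$-group, I interpret $H^0$ as the Tate cohomology $\widehat{H}^0(G, E_F) = E_k/N_{F/k}E_F$, so that both groups are finite.

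First I would identify the two cohomology groups explicitly. From the short exact sequence $1 \to E_F \to F^\times \to P_F \to 1$ of principal fractional ideals together with Hilbert~90 applied to $F^\times$, one obtains
\[
H^1(G, E_F) \;\cong\; P_F^G/\mathrm{image}(k^\times),
\]
and an analogous computation expresses $\widehat{H}^0(G, E_F)$ through the Chevalley/genus-theoretic formula in terms of $\mathrm{Cl}_k$, the ramification indices, and the index $[E_k:E_k \cap N_{F/k}F^\times]$. In both cases the part that grows with $n_2 - n_1$ is entirely local, coming from the ambiguous-ideals quotient $I_F^G/I_k \cong \bigoplus_{\mathfrak{P} \mid \mathfrak{p}} \mathbb{Z}/e_\mathfrak{P}\mathbb{Z}$, which is a permutation module on the primes of $\mathbb{L}$ above $\mathfrak{p}$ with cyclic summands.

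Second, I would study the action of $\gamma'_m$. The $\Gamma'$-action on both cohomology groups factors through $\Gamma'/\Gamma'^{2^{n_1}}$, and $\gamma'_m = \gamma'^{2^m}$ acts through its image there. On the local permutation module above, $\gamma'_m$ permutes primes over $\mathfrak{p}$ in $\mathbb{L}$ and acts on each cyclic factor $\mathbb{Z}/e_\mathfrak{P}\mathbb{Z}$ through a fixed power depending only on the decomposition group at $\mathfrak{P}$; hence the image of $(1-\gamma'_m)$ is annihilated by a power of $2$ depending only on $m$ and on the (bounded) quantity $[\mathbb{L}:\mathbb{K}]$, uniformly in $(n_1,n_2)$. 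On the remaining global piece, Lemma \ref{pseudoA}, Lemma \ref{leo} (Leopoldt for abelian extensions of $\mathbb{K}$) and the $\mu=0$ statements of Theorem \ref{mu1} and Corollary \ref{mu_2} combine to show that the $(1-\gamma'_m)$-image of $\mathrm{Cl}_k$ and of the relevant quotient of $E_k$ is uniformly bounded in order. Adding the two contributions then produces a constant $2^k$ independent of $(n_1, n_2)$.

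The technical heart of the argument, and the step I expect to be the hardest, is the bookkeeping of the local ramification piece: one must verify that, as a $\Gamma'/\Gamma'^{2^{n_1}}$-module, the permutation module $\bigoplus_{\mathfrak{P} \mid \mathfrak{p}} \mathbb{Z}/e_\mathfrak{P}\mathbb{Z}$ decomposes into a bounded number of cyclic summands on which $(1-\gamma'_m)$ has a controlled image, and that the same holds after dividing by the image of $k^\times$. This follows Greither's strategy in \cite{greither}, but the case $p=2$ requires extra care with $2$-parts of decomposition and inertia groups, precisely the sort of parity issue already exposed in Lemma \ref{claima} and Lemma \ref{claimb}.
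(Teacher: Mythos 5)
Your proposal takes a genuinely different route from the paper's. The paper introduces the $\p$-units $E'_{m'+n_2-m}$, forms the exact sequence $0\to E\to E'\to R\to 0$ (with $R$ a $\Z_2$-free module on which $\Gamma'^{2^m}$ acts trivially because $\L_\infty/\L_{m'}$ is totally ramified above $\p$), invokes the uniform bound on $\vert H^1(\Gamma'_{n_2,n_1},E')\vert$ from Iwasawa's Theorem 12, and for $H^0$ computes the Herbrand quotient $q(E')=2^{(n_2-n_1)(1-s)}$ and then runs a somewhat delicate argument through norm maps $N_{n_1,m}$ to isolate the piece annihilated by $(1-\gamma'_m)$. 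You instead propose to use the sequence $1\to E_F\to F^\times\to P_F\to 1$, Hilbert 90, and a genus-theory/Chevalley decomposition so that the growing contribution to $H^1$ lands in the permutation module $I_F^G/I_k\cong\bigoplus_{\mathfrak{P}\mid\p}\Z/e_\mathfrak{P}\Z$, which is killed by $1-\gamma'_m$ because $\gamma'_m$ fixes every prime above $\p$ (same ramification fact the paper uses). For the $H^1$ half this plan is essentially sound once one observes that the kernel of $H^1(G,E_F)\to I_F^G/I_k$ is exactly the capitulation kernel $\ker(\mathrm{Cl}_k\to\mathrm{Cl}_F)$, which is uniformly bounded by Iwasawa theory; this is a valid alternative to the paper's use of $\p$-units for $H^1$.

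The genuine gap is on the $\widehat H^0$ side, and also in the justification you give for the global piece. You assert that Lemma \ref{pseudoA}, Lemma \ref{leo}, Theorem \ref{mu1} and Corollary \ref{mu_2} show that ``the $(1-\gamma'_m)$-image of $\mathrm{Cl}_k$ \ldots\ is uniformly bounded,'' but this is not what those statements give and it is not true in general: with $\lambda>0$ the groups $(1-\gamma'_m)A_{n_1}$ can grow with $n_1$. What is actually needed for $H^1$ is boundedness of the capitulation kernel (a different statement, which does hold), and for $\widehat H^0$ the genus-theoretic decomposition
\[
1\to (E_k\cap N_{F/k}F^\times)/N_{F/k}E_F\to \widehat H^0(G,E_F)\to E_k/(E_k\cap N_{F/k}F^\times)\to 1
\]
forces you to control $(1-\gamma'_m)$ on a subquotient of $\mathrm{Cl}_F^G$ \emph{and} on the product of local norm-residue groups $\prod_v k_v^\times/N F_w^\times$, neither of which is obviously annihilated by $1-\gamma'_m$ nor obviously uniformly bounded. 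This is exactly the bookkeeping the paper's Herbrand-quotient computation with $\p$-units is designed to avoid, and your sketch offers no substitute for the paper's key step of comparing $H^0(\Gamma'_{n_2,m},E')$ with $H^0(\Gamma'_{n_1,m},E')$ via the transfer $N_{n_1,m}$. Without that (or an equally explicit replacement), the $\widehat H^0$ bound does not follow.
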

\begin{proof}
The proof follows the ideas of \cite[Lemma 1.2]{Rubin2}. But it is restated here as we use weaker assumptions.
Let $E'_{m'+n_2-m}$ be the $\p$-units in $\L_{m'+n_2-m}$ and $R_{m'+n_2-m}$ be the $\Z_p$-free group defined by the exact sequence
\[0\to E_{m'+n_2-m}\to E'_{m'+n_2-m}\to R_{m'+n_2-m}\to 0\]
As $\L_{\infty}/\L_{m'}$ is totally ramified we see that $\Gamma'^{m}$ acts trivially on $R_{m'+n_2-m}$. We know from \cite[page 267]{iwasawa} that $\vert H^1(\Gamma'_{n_2,n_1},E'_{m'+n_2-m})\vert$ is uniformly bounded\footnote {in Iwasawa's notation $E'$ are the $p$-units, but the proof of Theorem 12 works for the $\p$-units as well.}. Further, we have the exact sequence \[H^0(\Gamma'_{n_2,n_1},R_{m'+n_2-m})\to H^1(\Gamma'_{n_2,n_1},E_{m'+n_2-m})\to H^1(\Gamma'_{n_2,n_1},E'_{m'+n_2-m})\]
The first term is annihilated by $1-\gamma'_m$ and the last term is uniformly bounded. It follows that $(1-\gamma'_m)H^1(\Gamma'_{n_2,n_1},E_{m'+n_2-m})$ is uniformly bounded. 

\smallskip
It is an immediate consequence from \cite[V Theorem 2.5]{Janusz} that $q(E'_{m'+n_2-m})=2^{(n_2-n_1)(1-s)}$, where $s$ is the number of primes above $2$. Thus, 
\[2^{(n_2-n_1)(s-1)}\vert H^1(\Gamma'_{n_2,n_1},E'_{m'+n_2-m}) \vert =\vert H^0(\Gamma'_{n_2,n_1},E'_{m'+n_2-m})\vert.\]
Consider the map $H^0(\Gamma'_{n_2,n_1},E'_{m'+n_2-m})\to N_{n_1,m}E'_{m'+n_1-m}/N_{n_2,m}E'_{m'+n_2-m}$ induced by $N_{n_1,m}=(\gamma'_{n_1}-1)/(\gamma'_{m}-1)$. Using that $N_{n_1,m}(1-\gamma'_m)=(1-\gamma'_{n_1})$ and  that $\Gamma'^{2^{n_1}}$ is precisely the group fixing $\L_{m'+n_1-m}$ we see that the subgroup \[((1-\gamma'_m)E'_{m'+n_1-m}+N_{n_2,n_1}E'_{m'+n_2-m})/N_{n_2,n_1}E'_{m'+n_2-m}\] is certainly contained in the kernel. Note that $N_{n_1,m}E'_{m'+n_1-m}/N_{n_2,m}E'_{m'+n_2-m}$ is the kernel of the natural map $H^0(\Gamma'_{n_2,m},E'_{m'+n_2-m})\to H^0(\Gamma'_{n_1,m},E'_{m'+n_1-m})$. Thus, we obtain:
\begin{align*}
\vert (1-\gamma'_m)H^0(\Gamma'_{n_2,n_1},E'_{m'+n_2-m})\vert&\le \frac{\vert H^0(\Gamma'_{n_2,n_1},E'_{m'+n_2-m})\vert\vert H^0(\Gamma'_{n_1,m},E'_{m'+n_1-m}) \vert }{\vert H^0(\Gamma'_{n_2,m},E'_{m'+n_2-m})\vert }\\&\le \frac{2^{(n_2-n_1)(s-1)+k}2^{(n_1-m)(s-1)+k}}{2^{(n_2-m)(s-1)}}=2^{2k},
\end{align*}
where $2^k$ is the uniform bound on $H^1(\Gamma'_{n_2,n_1},E'_{m'+n_2-m})$. It is easy to verify that the natural map $H^0(\Gamma'_{n_2,n_1},E_{m'+n_2-m})\to H^0(\Gamma'_{n_2,n_1},E'_{m'+n_2-m})$ is an injection and the claim follows.
\end{proof}

\begin{lemma}
\label{kernel}
Let $n\ge m'$ and consider the projection \[\pi_n\colon\overline{E}_{\infty}/(1-\gamma'_{m+n-m'})\overline{E}_{\infty}\to \overline{E}_n.\] There exists an integer $k$ such that $2^k(1-\gamma'_m)$ annihilates the kernel and the cokernel of $\pi_n$ for all $n\ge m$.
\end{lemma}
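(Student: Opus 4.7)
The plan is to interpret both the kernel and the cokernel of $\pi_n$ via Tate cohomology of the cyclic Galois groups $G_{n_2,n} = \Gal(\L_{n_2}/\L_n)$ acting on $\overline{E}_{n_2}$ (for $n_2 \geq n$), and to apply the uniform bounds of Lemma \ref{H0andh1} level-by-level. Set $\Delta = \gamma'_{m+n-m'}$; then $\Delta$ topologically generates $\Gal(\L_\infty/\L_n)$ and acts trivially on $\overline{E}_n$, so $\pi_n$ factors through the coinvariants $\overline{E}_\infty/(1-\Delta)\overline{E}_\infty$ as in the statement. A Mittag-Leffler argument identifies the image of $\pi_n$ with $\bigcap_{n_2 \geq n} N_{\L_{n_2}/\L_n}\overline{E}_{n_2}$, so the cokernel embeds into $\varprojlim_{n_2} \overline{E}_n/N_{\L_{n_2}/\L_n}\overline{E}_{n_2}$.

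For the cokernel, each of these finite-level quotients is, by cyclicity of $G_{n_2,n}$, identified with the Tate cohomology group $\hat{H}^0(G_{n_2,n}, \overline{E}_{n_2})$; note that the $H^0$ in Lemma \ref{H0andh1} really is $\hat{H}^0$ (otherwise the invariants would be infinite). That lemma then gives $2^k(1-\gamma'_m)$-annihilation with $k$ independent of $n_2$, and this bound passes to the inverse limit. For the kernel, represent a class by a norm-coherent sequence $(u_{n_2}) \in \overline{E}_\infty$ with $u_n = 0$; then $u_{n_2} \in \ker N_{\L_{n_2}/\L_n}$ for every $n_2$, so its class in $\hat{H}^{-1}(G_{n_2,n}, \overline{E}_{n_2}) = \ker(N_{n_2,n})/(1-\Delta_{n_2})\overline{E}_{n_2}$ is annihilated by $2^k(1-\gamma'_m)$, using the cyclic-group periodicity $\hat{H}^{-1} \cong \hat{H}^1$ together with the $\hat{H}^1$-bound from Lemma \ref{H0andh1}. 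Hence at each level $2^k(1-\gamma'_m)\, u_{n_2} = (1-\Delta_{n_2}) v_{n_2}$ for some $v_{n_2}$. Applying $N_{n_2+1,n_2}$ shows that the admissible choices of $v_{n_2}$ form an inverse system of nonempty closed (hence compact) subsets of $\overline{E}_{n_2}$, whose projective limit furnishes a norm-coherent lift $(v_{n_2}) \in \overline{E}_\infty$ satisfying $(1-\Delta)(v_{n_2}) = 2^k(1-\gamma'_m)(u_{n_2})$, which shows the class is zero in the coinvariants.

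The main obstacle is the transfer from the formulation of Lemma \ref{H0andh1}, which is stated in terms of the global unit groups $E_n$, to the $\p$-adic closures $\overline{E}_n$ that appear here; this is handled by noting that $\overline{E}_n$ is a quotient of $E_n \otimes \Z_2$ by a finite $\Z_2$-module whose size is uniformly controlled via Leopoldt's conjecture (Lemma \ref{leo}), so that the cohomological bounds survive at the cost of possibly enlarging $k$. The secondary, more technical point is the compactness argument turning level-wise bounds into a statement about the inverse limit and its coinvariants; here the Mittag-Leffler property is automatic since the transition maps in question are surjective onto closed subgroups of compact pro-$2$ modules.
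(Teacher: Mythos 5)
Your argument is essentially the same as the paper's: both proofs reduce the kernel and cokernel of $\pi_n$ to Tate cohomology of the finite cyclic layers $\Gamma'_{n_2,n}$ acting on the unit groups, apply the uniform $2^k(1-\gamma'_m)$-annihilation from Lemma \ref{H0andh1}, and pass to the inverse limit (the paper writes this as a four-term exact sequence with $\varprojlim H^1$ and $\varprojlim H^0$ flanking the coinvariants map, while you unfold it via $\hat{H}^{-1}\cong\hat{H}^1$ and a compactness argument). One point where you are actually more careful than the paper: you explicitly flag that Lemma \ref{H0andh1} is stated for the global units $E_n$ while Lemma \ref{kernel} concerns the $\p$-adic closures $\overline{E}_n$, a transfer the paper invokes silently; your appeal to Leopoldt/Lemma \ref{leo} for this is the right idea, though the kernel of $E_n\otimes\Z_2\to\overline{E}_n$ (a localization only at $\p$, not at all of $p$) would need a bit more justification to be uniformly controlled.
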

\begin{proof}
We have an exact sequence
\begin{align*}
&\lim_{\infty\leftarrow n'}H^1(\Gamma'_{m+n'-m',m+n-m'},\overline{E}_{n'})\to \\\to&\overline{E}_{\infty}/(1-\gamma'_{m+n-m'})\overline{E}_{\infty}\to \overline{E}_n\to \lim_{\infty\leftarrow n'}H^0(\Gamma'_{m+n'-m',m+n-m'},\overline{E}_{n})\end{align*}
The first and the last term are annihilated by $2^k(1-\gamma'_m)$ du to Lemma \ref{H0andh1} and the claim follows.
\end{proof}
\begin{lemma}
\label{pseudocyclic} Let $U_{\infty}$ be defined as in the introduction. Then
$U_{\infty}\otimes_{\Z_p}\Q_p\cong\Lambda\otimes_{\Z_p}\Q_p$ and $U_{\infty,\chi}\otimes_{\Z_p}\Q_p\cong\Lambda_{\chi}\otimes_{\Z_p}\Q_p$.
\end{lemma}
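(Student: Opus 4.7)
The plan is to reduce the claim to Iwasawa's classical theorem on local principal units in a $\Z_p$-extension of a $p$-adic field. The argument unfolds in three steps.

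First, I would decompose the module $U_n$ according to the primes of $\L'_n$ above $\p$: writing $U_n=\bigoplus_{P\mid \p}U_n^{(P)}$ with $U_n^{(P)}$ the group of principal units in the completion $(\L'_n)_P$, and passing to the inverse limit yields $U_{\infty}=\bigoplus_{P}U_{\infty}^{(P)}$, the sum running over the finitely many primes of $\L'_{\infty}$ above $\p$. Since $G:=\Gal(\L'_{\infty}/\K)$ acts transitively on these primes, fixing a prime $P_0$ with decomposition subgroup $D_0\subseteq G$ identifies $U_{\infty}$ with the induced module $\Z_2[[G]]\otimes_{\Z_2[[D_0]]} U_{\infty}^{(P_0)}$.

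Second, I would analyze the single component $U_{\infty}^{(P_0)}$. Let $k:=(\L')_{P_0}$, which is a finite extension of $\K_{\p}=\Q_2$, and let $k_n:=(\L'_n)_{P_0}$, so that the tower $k_{\infty}/k$ is eventually a totally ramified $\Z_2$-extension over some intermediate $k_{n_0}$. Iwasawa's classical theorem on local principal units in a $\Z_p$-extension (which holds at $p=2$ as well, up to bounded $2$-torsion that dies after inverting $2$) yields that $\varprojlim_n U_n^{(P_0)}\otimes_{\Z_2}\Q_2$ is free of rank $[k_{n_0}:\Q_2]$ over $\Z_2[[\Gal(k_{\infty}/k_{n_0})]]\otimes\Q_2$. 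Since $\Z_2[[D_0]]$ is itself free of rank $[k_{n_0}:\Q_2]$ over its subring $\Z_2[[\Gal(k_{\infty}/k_{n_0})]]$ (via coset representatives), this upgrades to the statement that $U_{\infty}^{(P_0)}\otimes\Q_2$ is free of rank $1$ over $\Z_2[[D_0]]\otimes\Q_2$.

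Third, because $\Z_2[[G]]$ is a free $\Z_2[[D_0]]$-module, induction preserves freeness and rank, so $U_{\infty}\otimes\Q_2$ is free of rank $1$ over $\Lambda\otimes\Q_2$, i.e.\ isomorphic to $\Lambda\otimes\Q_2$ as a $\Lambda$-module. The second statement follows at once by applying the exact functor $-\otimes_{\Z_2[H]}\Z_2(\chi)$ to this isomorphism. The main obstacle is justifying the $p=2$ version of Iwasawa's theorem and carefully matching the relevant $\Z_2$-ranks on the two sides; both become routine once one has passed to $\Q_2$-coefficients, where the small differences between $p=2$ and odd primes (powers of $2$ in denominators, Herbrand-quotient corrections) vanish.
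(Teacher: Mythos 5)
Your overall strategy — decompose $U_\infty$ as an induced module from the decomposition group $D_0$ of a prime above $\p$, analyze a single local factor via Iwasawa's theorem, then induce back up to $\Lambda$ — is the standard line of attack and is in the same spirit as the references Bley cites at this point (which the paper simply defers to). The treatment of the $\chi$-component in your final step is also fine; since you are applying a covariant functor to an isomorphism, no exactness is actually needed.

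There is, however, a genuine gap in your second step. You argue: $U_\infty^{(P_0)}\otimes\Q_2$ is free of rank $[k_{n_0}:\Q_2]$ over $\Q_2\otimes\Z_2[[\Gal(k_\infty/k_{n_0})]]$, and $\Z_2[[D_0]]$ has the same rank over that subring, ``so this upgrades'' to freeness of rank $1$ over $\Q_2\otimes\Z_2[[D_0]]$. This is not a valid inference: equality of $\Z_2[[\Gal(k_\infty/k_{n_0})]]$-ranks is a necessary, not a sufficient, condition for a $\Z_2[[D_0]]$-module to be free of rank one. Concretely, write $D_0=\Gamma_0\times\Delta_0$ with $\Delta_0$ finite; then $\Q_2\otimes\Z_2[[D_0]]$ splits as a product over characters $\chi$ of $\Delta_0$, and a module can have the correct total rank while being concentrated in the wrong $\chi$-eigenspaces (for instance, $\Gamma_0$-free of rank $|\Delta_0|$ with $\Delta_0$ acting trivially). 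To close the gap you must actually track the $\Delta_0$-action, not just count ranks. The classical device for doing this is the normal basis theorem combined with the $\p$-adic logarithm: for each $n$ the $\log$ identifies $U_n^{(P_0)}\otimes\Q_2$ with the additive group $k_n\otimes\Q_2$, which is free of rank one over $\Q_2[\Gal(k_n/\Q_2)]$, and one then passes to the limit using the norm-compatibility of these identifications. Alternatively, one can invoke the $D_0$-equivariant form of Coleman's isomorphism directly. Either way, some equivariant structural input beyond Iwasawa's rank statement is needed; your argument as written supplies only the rank count.
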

\begin{proof}
The first claim follows as in \cite[Lemma 3.5 Claim 2]{BL}. Bley gives two references for this proof. Note that the second one is only stated for $p>2$ but the proof works for $p=2$ as well. 

The second claim can be proved as follows: \[U_{\infty}\otimes_{\Z_p}\Z_p(\chi)\otimes_{\Z_p}\Q_p\cong \Lambda\otimes_{\Z_p}\Z_p(\chi)\otimes_{\Z_p}\Q_p.\] Let $I_{\chi}\subset \Z(\chi)[H]$ be the module generated by $\sigma-\chi(\sigma)$ for $\sigma\in H$. It is an easy verification that \begin{align*}&U_{\infty}\otimes_{\Z_p}\Z_p(\chi)\otimes_{\Z_p}\Q_p/I_{\chi}(U_{\infty}\otimes_{\Z_p}\Z_p(\chi)\otimes_{\Z_p}\Q_p)\\&\cong \Lambda\otimes_{\Z_p}\Z_p(\chi)\otimes_{\Z_p}\Q_p/I_{\chi}(\Lambda\otimes_{\Z_p}\Z_p(\chi)\otimes_{\Z_p}\Q_p).\end{align*}
It is proved in \cite[Lemma 2.1]{Tsuji} that $M_{\chi}\cong (M\otimes_{\Z_p}\Z_p(\chi))/I_{\chi}(M\otimes_{\Z_p}\Z_p(\chi))$.
 Further, for any module $M$ we see that \begin{align*}&M\otimes_{Z_p}\Z_p(\chi)\otimes_{\Z_p}\Q_p/I_{\chi}(M\otimes_{Z_p}\Z_p(\chi)\otimes_{\Z_p}\Q_p)\\&=(M\otimes_{\Z_p}\Z_p(\chi)/I(\chi)(M\otimes_{\Z_p}\Z_p(\chi)))\otimes \Q_p=M_{\chi}\otimes_{\Z_p}\Q_p.\end{align*} Using this for $U_{\infty}$ and $\Lambda$ the second claim follows.
\end{proof}
\begin{lemma}
\label{thetamap}
Let $h_{\chi}$ be the characteristic ideal of $(\overline{E}/\overline{C})_{\chi}$.
Let $n\ge m$. Then there exist constants $n_0$, $c_1$ and $c_2$ indeependent of $n$, a divisor $h'_{\chi}$ of $h_{\chi}$ and a $\Gal(\L_{m'+n-m}/\K)$-homomorphism 
\[\vartheta_{m'+n-m}:\overline{E}_{m'+n-m,\chi}\to \Lambda_{n,\chi}\]
such that
\begin{itemize}
\item[i)] $h'_{\chi}$ is prime to $1-\gamma'_v$ for all $v$,
\item[ii)] $(\gamma'_{n_0}-1)^{c_1}2^{c_2}h'_{\chi}\Lambda_{n,\chi}\subset\vartheta_{m'+n-m} (im(\overline{C}_{m'+n-m,\chi}))$,
 wehere $im(\overline{C}_{m'+n-m,\chi})$ denotes the image of $\overline{C}_{m'+n-m,\chi}$ in $\overline{E}_{m'+n-m,\chi}$.
\end{itemize}
\end{lemma}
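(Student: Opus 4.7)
The plan is to construct $\vartheta$ at the top of the tower and then descend via Lemma \ref{kernel}, keeping the error terms uniform in $n$.

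By Lemma \ref{pseudocyclic}, $U_{\infty,\chi}\otimes_{\Z_p}\Q_p \cong \Lambda_{\chi}\otimes_{\Z_p}\Q_p$, so there is a $\Lambda_{\chi}$-homomorphism $\vartheta_{\infty}\colon U_{\infty,\chi}\to \Lambda_{\chi}$ obtained by multiplying such an isomorphism by a suitable element of $\Lambda_{\chi}$ to clear denominators; both its kernel and cokernel are pseudo-null $\Lambda_{\chi}$-modules. Restricting $\vartheta_{\infty}$ to $\overline{E}_{\infty,\chi}\subset U_{\infty,\chi}$, one has
\[\vartheta_{\infty}(\overline{E}_{\infty,\chi})/\vartheta_{\infty}(\overline{C}_{\infty,\chi})\;\sim\;(\overline{E}/\overline{C})_{\chi},\]
so the left-hand side has characteristic ideal $h_{\chi}$.

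Factor $h_{\chi}=h'_{\chi}\prod_v(1-\gamma'_v)^{e_v}$ with $h'_{\chi}$ coprime to every $1-\gamma'_v$; only finitely many $e_v$ are positive. Since $1-\gamma'_v$ divides $1-\gamma'_{n_0}$ whenever $v\leq n_0$, one can pick $n_0$ and $c_1$ so that $(\gamma'_{n_0}-1)^{c_1}$ is divisible by $\prod_v(1-\gamma'_v)^{e_v}$ in $\Lambda_{\chi}$. Taking $c_2$ large enough to absorb the $2$-power discrepancies coming from the pseudo-null kernel and cokernel of $\vartheta_{\infty}$, we obtain the infinite-level inclusion
\[(\gamma'_{n_0}-1)^{c_1}\,2^{c_2}\,h'_{\chi}\Lambda_{\chi}\;\subset\;\vartheta_{\infty}(\overline{C}_{\infty,\chi}).\]

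To descend, tensor $\vartheta_{\infty}$ with $\Lambda_{\chi}/(1-\gamma'_n)=\Lambda_{n,\chi}$ and apply Lemma \ref{kernel} (with its $n$ replaced by $m'+n-m$) to identify $\overline{E}_{\infty,\chi}/(1-\gamma'_n)\overline{E}_{\infty,\chi}$ with $\overline{E}_{m'+n-m,\chi}$ modulo a kernel and cokernel annihilated by $2^k(1-\gamma'_m)$. Setting $\vartheta_{m'+n-m}$ equal to the resulting map, condition (i) is automatic by our choice of $h'_{\chi}$, while (ii) follows by projecting the infinite-level inclusion modulo $(1-\gamma'_n)$ and enlarging $c_1,c_2$ by the fixed constants from Lemma \ref{kernel} — the $(1-\gamma'_m)$ factor is absorbed into $(\gamma'_{n_0}-1)^{c_1}$ since $n_0\geq m$. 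The main obstacle is the factorisation step: isolating $h'_{\chi}$ and showing that $h_{\chi}/h'_{\chi}$ can be absorbed into a single $(\gamma'_{n_0}-1)^{c_1}$ uniformly in $n$, which works precisely because only finitely many $v$ contribute to the factorisation of $h_{\chi}$.
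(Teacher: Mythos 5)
Your sketch follows the same route as the paper's proof, which simply defers to \cite[Lemma 3.5]{BL} after noting that $\overline{E}_{\infty,\chi}$ is free cyclic over $\Lambda_\chi\otimes\Q_p$ (via Lemma \ref{pseudocyclic}) and introducing the modified projection $\pi_n\colon\overline{E}/(1-\gamma'_n)\to\overline{E}_{m'+n-m}$: you build $\vartheta_\infty$ at the top, isolate $h'_\chi$ from the $(1-\gamma'_v)$-factors, and descend through Lemma \ref{kernel}. The one place worth being slightly more careful is the phrase ``setting $\vartheta_{m'+n-m}$ equal to the resulting map'': since $\pi_n$ has both kernel and cokernel killed by $2^k(1-\gamma'_m)$ but is not an isomorphism, one must precompose with multiplication by $2^k(1-\gamma'_m)$ (to kill the cokernel) and note that $2^k(1-\gamma'_m)\overline{\vartheta}_\infty$ factors through $\pi_n$ (to handle the kernel), which costs the square $(2^k(1-\gamma'_m))^2$; your remark about enlarging $c_1,c_2$ by fixed constants covers exactly this, so the argument is sound.
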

\begin{proof}
From the second claim of Lemma \ref{pseudocyclic} and the fact that $\Lambda_{\chi}\otimes_{\Z_p}\Q_p$ is a principal ideal domain we obtain that the submodule $\overline{E}_{\infty,\chi}$ is free cyclic over the ring $\Lambda_{\chi}\otimes_{\Z_p}\Q_p$. Consider the map \[\pi_n: \overline{E}/(1-\gamma'_n)\to \overline{E}_{m'+n-m}.\]
The rest of the proof is exactly the same as \cite[Lemma 3.5]{BL}; we just have  to substitute $E_n$ by $E_{m'+n-m}$ and $(1-\gamma')$ by $(1-\gamma'_m)$ in all computations due to the new definition of $\pi_n$  and the fact that Lemma \ref{kernel} is weaker than the corresponding claim in Bley's case. Note that Bley states the lemma only for non trivial characters but this assumption is not necessary.
\end{proof}
\begin{lemma}
\label{tecnical}
Let $\M=\L_n$ for some $n$ and let $\Delta$ be a subgroup of $\Gal(\M/\K)$. Let $\chi$ be a character of $\Delta$, $M=2^l$ and $\mathfrak{A}=\prod_{i=1}^s\mathfrak{q}_i\in S_{n,l}$. Let $\mathfrak{Q}$ be a divisor of $\q_s$ in $\M$ and let $C=[\mathfrak{Q}]$ the ideal class of $\mathfrak{Q}$. Let $B\subset A(\M)$ be the subgroup generated by ideals dividing $\mathfrak{A}/\q_s$. Let $x\in \M^{\times}/(\M^{\times})^M$ be such that $[x]_{\mathfrak{r}}=0$ for all $(\mathfrak{r},\mathfrak{A})=1$. Let $W\subset  \M^{\times}/(\M^{\times})^M$ be the $\Z_p[G]$-submodule generated by $x$. Assume that there are elements $E,\eta,g\in\Z_p[G]$ such that 
\begin{itemize}
\item[i)] $E\cdot\textup{ann}_{\Z_p[G]}(C_{\chi})\subset g\Z_p[G]_{\chi}$, where $C_{\chi}$ is the projection of $C$ to $(A/B)_{\chi}$.
\item[ii)] $\Z_p[G]_{\chi}/g(\Z_p[G])_{\chi}$ is finite.
\item[iii)] $M\ge \vert A_{\chi}(\M)\vert \vert\eta ((I_{\q_s}/MI_{\q_s})/[W]_{\q_s})\vert$.
\end{itemize}
Then there exists a $G$-homomorphism 
\[\psi\colon W_{\chi}\to (\Z/M\Z)[G]\]
such that $(g\psi(x)\mathfrak{Q})_{\chi}=(E\eta[x]_{\q})_{\chi}$ in $I_{\q}/M I_{\q}$.
\end{lemma}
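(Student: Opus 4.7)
The plan is to convert the hypothesis $[x]_{\mathfrak{r}}=0$ for $(\mathfrak{r},\mathfrak{A})=1$ into a group-ring element $\beta$ that annihilates the class $C_{\chi}$ modulo $B$, apply (i) to express $E\beta$ as $g\gamma$ in $\Z_p[G]_{\chi}$, and then put $\psi(x_\chi):=\eta\gamma\bmod M$. The three hypotheses (i)--(iii) will each play a distinct role: (i) produces $\gamma$; (ii) makes $\gamma$ unique; (iii) controls the ambiguity so that $\psi$ descends well-definedly to the cyclic module $W_{\chi}$.

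First I would decompose the principal ideal. Since $[x]_{\mathfrak{r}}=0$ for every prime $\mathfrak{r}$ of $\K$ coprime to $\mathfrak{A}$, the $\mathfrak{R}$-valuation of $(x)$ is divisible by $M$ for every prime $\mathfrak{R}$ of $\M$ not above $\mathfrak{A}$. Collecting terms yields $(x)=\mathfrak{a}_{0}^{M}\,\mathfrak{a}_{1}\cdots\mathfrak{a}_{s}$, where $\mathfrak{a}_{0}$ is coprime to $\mathfrak{A}$ and each $\mathfrak{a}_{i}$ ($i\ge 1$) is supported on the primes of $\M$ above $\mathfrak{q}_{i}$. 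Because $\mathfrak{q}_{s}\in S_{n,l}$ is totally split in $\M/\K$, the module $I_{\mathfrak{q}_{s}}\cong\Z[G]\mathfrak{Q}$ is free cyclic, so we may write $\mathfrak{a}_{s}=\beta\mathfrak{Q}$ for a unique $\beta\in(\Z/M\Z)[G]$ (using the same symbol for any integral lift). By construction $[x]_{\mathfrak{q}_{s}}=\beta\mathfrak{Q}$.

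Next, taking ideal classes on both sides of the decomposition and reducing modulo $B$ (which absorbs the $[\mathfrak{a}_{i}]$ for $1\le i<s$), I obtain $\beta C+M[\mathfrak{a}_{0}]\equiv 0$ in $A(\M)/B$. Projecting to the $\chi$-component and invoking (iii) (which in particular implies $M\ge|A_{\chi}(\M)|$), the coefficient $M$ annihilates every element of the $2$-power-order quotient of $A_{\chi}(\M)$, so $M[\mathfrak{a}_{0}]_{\chi}=0$ and $\beta C_{\chi}=0$; that is, $\beta\in\mathrm{ann}_{\Z_{p}[G]_{\chi}}(C_{\chi})$. Hypothesis (i) then yields $\gamma\in\Z_{p}[G]_{\chi}$ with $g\gamma=E\beta$, and hypothesis (ii) forces $g$ to be a non-zero-divisor in $\Z_{p}[G]_{\chi}$, pinning $\gamma$ down uniquely. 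Setting $\psi(x_{\chi}):=\eta\gamma\bmod M$, the desired identity is immediate: $g\psi(x_{\chi})=\eta\cdot g\gamma=E\eta\beta$, so multiplication by $\mathfrak{Q}$ in $I_{\mathfrak{q}_{s}}/MI_{\mathfrak{q}_{s}}$ gives $(g\psi(x)\mathfrak{Q})_{\chi}=(E\eta[x]_{\mathfrak{q}_{s}})_{\chi}$.

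The main obstacle I expect is the verification that $\psi$ descends to a $\Z_{p}[G]_{\chi}$-linear map from the cyclic module $W_{\chi}=\Z_{p}[G]_{\chi}\cdot x_{\chi}$. If $\alpha\in\Z_{p}[G]_{\chi}$ satisfies $\alpha x_{\chi}=0$, then applying $[\cdot]_{\mathfrak{q}_{s}}$ forces $\alpha\beta\in M\Z_{p}[G]_{\chi}$, and hence $g\alpha\gamma=E\alpha\beta\in EM\Z_{p}[G]_{\chi}$; so $\alpha\gamma\bmod M$ lies in the $g$-torsion of $(\Z/M\Z)[G]_{\chi}$. One must then show $\eta$ annihilates this $g$-torsion. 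Here the full content of (iii) enters: the factor $|A_{\chi}(\M)|$ bounds $|\Z_{p}[G]_{\chi}/\mathrm{ann}(C_{\chi})|$, which via (i) controls the $g$-torsion in the reduction modulo $M$, while the second factor $|\eta\cdot((I_{\mathfrak{q}_{s}}/MI_{\mathfrak{q}_{s}})/[W]_{\mathfrak{q}_{s}})|$ measures the defect between $\eta(\Z/M\Z)[G]_{\chi}$ and $\beta(\Z/M\Z)[G]_{\chi}$; together their product being $\le M$ is exactly what is needed to lift the relation $\beta C_{\chi}=0$ from $(A/B)_{\chi}$ to a vanishing of $\eta$ on the $g$-torsion. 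This bookkeeping, which is where the argument differs most delicately from the $p\ge 3$ treatment in \cite{BL}, is the technical core of the lemma.
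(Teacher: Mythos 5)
The paper itself does not prove this lemma: it simply cites \cite[Lemma 3.8]{BL} and \cite[Lemma 3.12]{greither}, so there is no internal proof to compare against. Judged on its own terms, your argument reproduces the correct general strategy (decompose $(x)$, extract $\beta$ from $[x]_{\q_s}=\beta\mathfrak{Q}$, use the class-group relation to see $\beta C_{\chi}=0$, invoke hypothesis (i) to divide by $g$, and set $\psi(x_{\chi})=\eta\gamma$), and the first half of the argument through the production of the unique $\gamma$ with $g\gamma=E\beta$ is sound. But the step you yourself single out as ``the technical core'' --- that $\psi$ descends to a well-defined $G$-homomorphism on $W_{\chi}$ --- is where the proof breaks, and your sketch of it contains a gap that does not obviously close.

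Concretely, you assert that $\alpha x_{\chi}=0$ in $W_{\chi}$ forces $\alpha\beta\in M\Z_p[G]_{\chi}$ by ``applying $[\cdot]_{\q_s}$.'' That inference is not valid. Writing $I_{\chi}$ for the kernel of $\Z_p[G]\to\Z_p[G]_{\chi}$, the condition $\alpha x_{\chi}=0$ only says $\alpha x\in I_{\chi}W$, hence $\alpha[x]_{\q_s}\in I_{\chi}[W]_{\q_s}$. This shows that $\alpha\beta\mathfrak{Q}$ dies in $([W]_{\q_s})_{\chi}$, not in $(I_{\q_s}/MI_{\q_s})_{\chi}$; the two are different unless the inclusion $[W]_{\q_s}\hookrightarrow I_{\q_s}/MI_{\q_s}$ stays injective after taking $\chi$-coinvariants, which is exactly the kind of failure of exactness that the index term in hypothesis (iii) is there to control. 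Your subsequent appeal to (iii) --- that $|A_{\chi}|$ ``controls the $g$-torsion'' and the second factor ``measures the defect'' --- names the right ingredients but does not carry out the counting that would turn the inequality in (iii) into the required annihilation statement. Finally, there is a type mismatch you never resolve: $\eta\gamma$ lives in $\Z_p[G]_{\chi}$ and reduces to $(\Z/M\Z)[G]_{\chi}$, but $\psi$ is required to land in $(\Z/M\Z)[G]$; to lift you need $\psi(x_{\chi})$ to be $\chi$-isotypic for the $\Delta$-action (otherwise $\psi(\sigma x_{\chi})=\chi(\sigma)\psi(x_{\chi})$ fails), and over $\Z/2^{l}\Z$ the passage between the $\chi$-quotient and the $\chi$-isotypic submodule is not free and needs its own bookkeeping (this is precisely the $M_{\chi}$ versus $M^{\chi}$ issue the paper handles elsewhere via Tsuji's lemma). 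Until the well-definedness is actually proved --- with the index bound in (iii) deployed quantitatively rather than gestured at --- the argument is incomplete.
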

\begin{proof}
This is \cite[Lemma 3.8]{BL}. The proof is the same as \cite[Lemma 3.12]{greither}.
\end{proof}
To prove the central theorem of this section we need the following lemma. 
\begin{lemma}
\label{lift}\cite[Lemma 3.13]{greither}
Let $\Delta$ be any finite group and $N$ a $\Z_p[\Delta]$-module. Let $\chi$ be a character of $\Delta$ and $n:N\to N_{\chi}$ the natural projection. Then there exists a $\Z_p[\Delta]$-homomorphism \[\varepsilon_{\chi}: N_{\chi}\to N\] such that $n\circ \varepsilon_{\chi}=\vert \Delta\vert$. 
\end{lemma}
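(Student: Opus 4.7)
The plan is to realize $\varepsilon_\chi$ as multiplication by the integral averaging idempotent attached to $\chi$. In $\Q_p(\chi)[\Delta]$ the element
\[e_\chi = \frac{1}{|\Delta|}\sum_{\sigma\in\Delta}\chi(\sigma^{-1})\sigma\]
is an idempotent satisfying $\tau e_\chi = \chi(\tau)e_\chi$ for every $\tau\in\Delta$. Clearing the denominator produces $\tilde{e}_\chi := |\Delta|\,e_\chi = \sum_{\sigma\in\Delta}\chi(\sigma^{-1})\sigma \in \Z_p(\chi)[\Delta]$, and the relation $\tau\tilde{e}_\chi = \chi(\tau)\tilde{e}_\chi$ is equivalent to $\tilde{e}_\chi\cdot(\sigma - \chi(\sigma)) = 0$ for every $\sigma\in\Delta$.

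Given $\overline{x}\in N_\chi$, I lift it to some $x\in N$ (extending scalars to $\Z_p(\chi)$ if $\chi$ is not $\Z_p$-valued) and set
\[\varepsilon_\chi(\overline{x}) := \tilde{e}_\chi\cdot x = \sum_{\sigma\in\Delta}\chi(\sigma^{-1})\,\sigma x.\]
Well-definedness is immediate from the identity $\tilde{e}_\chi(\sigma - \chi(\sigma)) = 0$, since the kernel of the surjection onto $N_\chi$ is generated, after extension of scalars, by elements of the form $(\sigma - \chi(\sigma))y$. For the $\Z_p[\Delta]$-equivariance, recall that $\tau$ acts on $N_\chi$ by the scalar $\chi(\tau)$; substituting $\sigma = \tau\sigma'$ in the defining sum gives
\[\varepsilon_\chi(\tau\overline{x}) = \chi(\tau)\sum_{\sigma}\chi(\sigma^{-1})\sigma x = \sum_{\sigma'}\chi(\sigma'^{-1})\tau\sigma' x = \tau\cdot\varepsilon_\chi(\overline{x}).\]

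To compute the composition, observe $n(\sigma x) = \chi(\sigma)\overline{x}$, so
\[n\circ\varepsilon_\chi(\overline{x}) = \sum_{\sigma\in\Delta}\chi(\sigma^{-1})\chi(\sigma)\overline{x} = |\Delta|\,\overline{x},\]
which is the required equality.

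There is essentially no obstacle here: this is the classical idempotent-averaging construction and every verification is formal. The only mild point of care is that $\chi$ may take values outside $\Z_p$, which is precisely why one passes to the rescaled element $\tilde{e}_\chi$ (so no denominator $|\Delta|$ enters) and extends scalars to $\Z_p(\chi)$ when choosing the lift $x$; neither of these modifications affects the fact that $\tilde{e}_\chi$ still lies in the integral group ring and that the final map is $\Z_p[\Delta]$-linear.
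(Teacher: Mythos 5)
Your construction has the right shape, but there is a genuine gap exactly where you wave it away at the end. The element $\tilde{e}_\chi = \sum_{\sigma\in\Delta}\chi(\sigma^{-1})\sigma$ lies in $\Z_p(\chi)[\Delta]$, not in $\Z_p[\Delta]$, as soon as $\chi$ is not $\Z_p$-valued, so for a lift $x\in N$ the element $\tilde{e}_\chi\cdot x$ lives in $N\otimes_{\Z_p}\Z_p(\chi)$ and \emph{not} in $N$. The lemma promises a map $\varepsilon_\chi\colon N_\chi\to N$, and $N$ carries no $\Z_p(\chi)$-module structure, so your $\varepsilon_\chi$ simply has the wrong target. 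Your final remark that ``$\tilde{e}_\chi$ still lies in the integral group ring'' is where this is glossed over: it lies in $\Z_p(\chi)[\Delta]$, and that is precisely the obstruction. This is not cosmetic here, because the application in the paper builds $\psi\colon W\to\Z/M\Z[G]$ out of $\varepsilon_\chi\circ\vartheta_n$ and genuinely needs the output in the $\Z_p$-integral group ring.

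The repair is to trace down to $\Z_p$: replace $\tilde{e}_\chi$ by $\tilde{e}_\Phi := \sum_{\sigma\in\Delta}\mathrm{Tr}_{\Z_p(\chi)/\Z_p}\bigl(\chi(\sigma^{-1})\bigr)\sigma$, which does lie in $\Z_p[\Delta]$ and is central there since $\chi$ is a one-dimensional class function. Writing $I_\chi=\ker\bigl(\Z_p[\Delta]\to\Z_p(\chi)\bigr)$ so that $N_\chi=N/I_\chi N$, one checks $\tilde{e}_\Phi\cdot\alpha=\sum_{\rho}\mathrm{Tr}\bigl(\chi(\rho^{-1})\chi(\alpha)\bigr)\rho=0$ for every $\alpha\in I_\chi$, so $\overline{x}\mapsto\tilde{e}_\Phi x$ is well defined from $N_\chi$ into $N$, and centrality gives $\Z_p[\Delta]$-linearity. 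The scalar by which $n\circ\varepsilon_\chi$ acts is
$\sum_{\sigma}\mathrm{Tr}\bigl(\chi(\sigma^{-1})\bigr)\chi(\sigma) = \sum_{\tau\in\Gal(\Q_p(\chi)/\Q_p)}\sum_{\sigma\in\Delta}\bigl(\chi\,(\chi^\tau)^{-1}\bigr)(\sigma) = \vert\Delta\vert$, since the Galois conjugates $\chi^\tau$ are pairwise distinct (they generate $\Z_p(\chi)$) and hence only $\tau=1$ contributes. Incidentally, the reason you may lift $\overline{x}$ to $N$ at all rather than only to $N\otimes\Z_p(\chi)$ is the same kind of fact: $\Z_p(\chi)$ is generated over $\Z_p$ by the values of $\chi$, so $n$ is already surjective. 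Everything else in your verification (the orthogonality computation, the change of variable for equivariance) carries over verbatim with $\tilde{e}_\Phi$ in place of $\tilde{e}_\chi$.
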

 
 Let $\q$ be an element in $S_{n,l}$ and $\mathfrak{A}$ in $I_{\q}$. Then there is an element $v_{\mathfrak{Q}}(\mathfrak{A})$ in $\Z/2^l\Z[\Gal(\L_n/\K)]$ such that $\mathfrak{A}=v_{\mathfrak{Q}}(\mathfrak{A})\mathfrak{Q}$.
The following Theorem allows us to relate the characteristic ideal of $A_{\chi}$ to the one of $(\overline{E}/\overline{C})_{\chi}$. The proof follows the ideas of \cite{BL}.
\begin{theorem}
Let $\M=\L_n$ and $G=\Gal(\M/\K)$ for n large enough. Let  $\chi$ be a character of $H\subset \Gal(\M/\K)$. For $1\le i\le k$ let $C_i\in A_{\chi}(\M)$ be such that $t(C_i)=(0,0,\cdots,2^{c_3},0\cdots,0)$ in $\oplus_{i=1}^k \Lambda_{\chi,m+n-m'}/(\overline{g}_i)$ where $t$ is the map defined in Lemma \ref{pseudoA} and $2^{c_3}$ annihilates the cokernel. Let $C_{k+1}$ in $A_{\chi}$ be arbitrary. Let $d=3c+4$ where $c$ is defined in Theorem \ref{tchebotarev}. Then there are prime ideals $\mathfrak{Q}_i$ in $\M$ such that
\begin{itemize}
\item[i)] $[\mathfrak{Q}_i]=2^dC_i$
\item[ii)] $\q_i=\mathfrak{Q}_i\cap\K$ is in $S_{n,n}$
\item[iii)] one has \begin{align}\label{iiia}
&(v_{\mathfrak{Q}_1}(\kappa(\q_1))_{\chi}=u_1\vert H\vert (\gamma'_{n_0}-1)^{c_1}2^{d+c_2}h'_{\chi} \mod 2^n\\
\label{iiib}
&(g_{i-1}v_{\mathfrak{Q}_i}(\kappa(\q_1\q_2\dots\q_i))_{\chi}\\\nonumber&=u_i\vert H\vert(\gamma'_{n_0}-1)^{c_1^{i-1}}2^{2d+c_3}(v_{\mathfrak{Q}_{i-1}}(\kappa(\q_1\dots\q_{i-1}))_{\chi} \mod 2^n
\textup{ for } 2\le i\le k+1.\end{align}
\end{itemize}
\begin{proof}
By Lemma \ref{thetamap} there exists an element $\xi'$ in $im(\overline{C}_{n,\chi})$ with the property $\vartheta_n(\xi')=(1-\gamma'_{n_0})^{c_1}2^{c_2}h'_{\chi}$. By approximating $\xi$ with a global elliptic unit we can find $\xi\in C_n$ such that $\vartheta_n(\xi)=(1-\gamma'_{n_0})^{c_1}2^{c_2}h'_{\chi}\mod M\Lambda_{\chi,m+n-m'}$. We can apply Lemma \ref{euler} to find an Euler system $\alpha^{\sigma}(n,\mathfrak{r})$ such that $\alpha^{\sigma}(n,(1))=\xi$. Let $i=1$ and $C$ be a preimage of $C_i$ under the map $A_n\to A_{\chi,n}$. Choose  $M=2^n$ and $W=\mathcal{O}(\M)^{\times}/(\mathcal{O}(\M)^{\times})^M$. Consider
\[\psi: W\to \Z/M\Z[G]\quad x\mapsto(\varepsilon_{\chi}\circ \vartheta_n)(x^v),\] where $v$ is such that $x^v\in E_n$ for all $x$ and $\varepsilon_{\chi}$ is defined as in Lemma \ref{lift}. Then Theorem \ref{tchebotarev} implies that we can find an ideal $\mathfrak{Q}_1$ satisfying i) and ii). We know further from Theorem \ref{tchebotarev} that $\varphi_{\q}(w)=2^du\psi(w)\mathfrak{Q}_1$. As $\alpha^{\sigma}(n,(1))$ is a unit we can apply Proposition \ref{kappa-properties} and obtain
\begin{align*}
v_{\mathfrak{Q}_1}(\kappa(\q_1))\mathfrak{Q}_1&=[\kappa(\q_1)]_{\q_1}=\varphi_{\q_1}(\kappa(1))\\&=\varphi_{\q_1}(\xi)=2^du\psi(\xi)\mathfrak{Q}_1=2^duv\varepsilon_{\chi}((\gamma'_{n_0}-1)^{c_1}2^{c_2}h'_{\chi})\mathfrak{Q}_1\mod 2^n.
\end{align*}
Projecting to the $\chi$ component and using the definition of $\varepsilon_{\chi}$ we get \eqref{iiia}.

We will now define the ideals $\mathfrak{Q}_i$ inductively. Assume that we have already found the $\mathfrak{Q}_1,\dots\,\mathfrak{Q}_{i-1}$ and let $\mathfrak{a}_{i-1}=\prod_{j=1}^{i-1}\q_i$. Using point iii) recursively we see
\[\prod_{j\le i-2}g_j(v_{\mathfrak{Q}_{i-1}}(\kappa(\mathfrak{a}_{i-1}))_{\chi}=\vert H\vert ^{i-1}u2^{(i-2)(2d+c_3)+d+c_2}(\gamma'_{n_0}-1)^{c_1+\sum_{j=1}^{i-2}c_1^j}h'_{\chi}.\]
Let $D_i=\vert H\vert ^{i-1}u2^{(i-2)(2d+c_3)+d+c_2}$. By enlarging $c_1$ we can assume that $c_1+\sum_{j=1}^{i-2}c_1^j$ is bounded by $c_1^{i-1}$ and set $t_i=c_1^{i-1}$. It follows that $v_{\mathfrak{Q}_{i-1}}(\kappa(\mathfrak{a}_{i-1}))_{\chi}\mid D_i h'_{\chi}(\gamma'_{n_0}-1)^{t_i}$. Define $N=(\gamma'_{n_0}-1)^{t_i}(I_{\q_{i-1}}/(MI_{\q_{i-1}}+\Z_p[G][\kappa(\mathfrak{a}_{i-1})]_{\q_{i-1}}))$. As $h'_{\chi}$ is coprime to every $\gamma'_n-1$ we see that $\Lambda_{\chi,m+n-m'}/h'_{\chi}$ is finite and further
\[\vert N\vert \le \vert \Lambda_{\chi,m+n-m'}/D_i\vert\vert \Lambda_{\chi,m+n-m'}/h'_{\chi}\vert.\] Choose now $2^l=M> \max(\vert A_{\chi}(\M)\vert \vert \Lambda_{\chi,m+n-m'}/D_i\vert\vert \Lambda_{\chi,m+n-m'}/h'_{\chi}\vert,2^n)$. We want to apply Lemma \ref{tecnical} with $E=2^{c_3+d}$, $\eta=(\gamma'_{n_0}-1)^{t_i}$, $g=g_{i-1}$, $\mathfrak{A}=\mathfrak{a}_{i-1}$,  and $x=\kappa(\mathfrak{a}_{i-1})$. To do so we have to check the assumptions. It follows directly from Proposition \ref{kappa-properties} i) that $[x]_{\mathfrak{r}}=0$ for all $\mathfrak{r}$ coprime to $\mathfrak{a}_{i-1}$. We now have to check the conditions i)-iii) from Lemma \ref{tecnical}.
\begin{itemize}
\item[i)] By definition $C=[\mathfrak{Q}_{i-1}]=2^dC_{i-1}$. The annihilator of  $t(C)$ is $g_{i-1}/(2^{c_3+d},g_{i-1})$ and we obtain that $E\cdot \textup{ann}_{\Z_p[G]}(C_{\chi})\subset g_{i-1}\Z_p[G]_{\chi}$.
\item[ii)] It is immediate from Lemma \ref{pseudoA} that $\Z_p[G]_{\chi}/\Z_p[G]_{\chi}g$ is finite.
\item[iii)] $M>\vert A_{\chi}\vert\vert N\vert=\vert A_{\chi}\vert \vert \eta( \frac{I_{\q_{i-1}}/(MI_{\q_{i-1}})}{\Z_p[G][\kappa(\mathfrak{a}_{i-1})]_{\q_{i-1}}})\vert$. 
\end{itemize}
Thus, we obtain a homomorphism
\[\psi_i\colon W_{\chi}\to\Z/M\Z[G]\] with  $g_{i-1}\psi_i(\kappa(\mathfrak{a}_{i-1}))_{\chi}=(2^{c_3+d}(\gamma'_{n_0}-1)^{t_i}v_{\mathfrak{Q}_{i-1}}(\kappa(\mathfrak{a}_{i-1})))_{\chi}$. Let $\Pi_{\chi}$ be the projection $W\to W_{\chi}$ and define $\psi=\varepsilon_{\chi}\circ\psi_i\circ \Pi_{\chi}$. Let $M$ be as in the previous paragraph and $C$ a preimage of $C_i$. Then Theorem \ref{tchebotarev} gives us a prime ideal $\mathfrak{Q}_i$ satisfying i) and ii) (recall that $2^n\mid M$). Further, $\varphi_{\q_i}(\kappa(\mathfrak{a}_{i-1}))=2^du\psi(\kappa(\mathfrak{a}_{i-1}))\mathfrak{Q}_i$. Then we obtain 
\begin{align*}
v_{\mathfrak{Q}_i}(\kappa(\q_1\dots\q_i))\mathfrak{Q}_i=[\kappa(\q_1\dots\q_i)]_{\q_i}=\varphi_{\q_i}(\kappa(\q_1\dots\q_{i-1}))\\=2^du\psi(\kappa(\mathfrak{a}_{i-1}))\mathfrak{Q}_i
\end{align*}
Projecting to the $\chi$-component and using the definition of $\psi_i$ we obtain
\[(g_{i-1}v_{\mathfrak{Q}_i}(\kappa(\q_1\q_2\dots\q_i))_{\chi}=u_i\vert H\vert(\gamma'_{n_0}-1)^{c_1^{i-1}}2^{2d+c_3}(v_{\mathfrak{Q}_{i-1}}(\kappa(\q_1\dots\q_{i-1}))_{\chi} \]
which finishes the proof.
\end{proof}
\end{theorem}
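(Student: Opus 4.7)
The plan is to build the primes $\mathfrak{Q}_i$ inductively, using the map $\vartheta_n$ from Lemma \ref{thetamap} to translate elliptic units into Iwasawa-theoretic data, and using Theorem \ref{tchebotarev} (the Chebotarev input) together with the Kolyvagin/Rubin derivative machinery of Proposition \ref{kappa-properties} to read off congruences for $\kappa(\mathfrak{a})$ at successive primes. First I would use Lemma \ref{thetamap} to pick $\xi' \in \mathrm{im}(\overline{C}_{m'+n-m,\chi})$ with $\vartheta_n(\xi') = (\gamma'_{n_0}-1)^{c_1}2^{c_2}h'_{\chi}$, and approximate $\xi'$ by a global elliptic unit $\xi \in C_n$ so that the identity still holds modulo $M\Lambda_{n,\chi}$ for $M$ a sufficiently large power of $2$. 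Applying Lemma \ref{euler} then embeds $\xi$ in an Euler system $\alpha^{\sigma}(n,\mathfrak{r})$ with $\alpha^{\sigma}(n,(1))=\xi$.

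For the base case $i=1$, I would take $M = 2^n$, $W = \mathcal{O}(\M)^{\times}/(\mathcal{O}(\M)^{\times})^M$, and define $\psi \colon W \to (\Z/M\Z)[G]$ by $\psi(x) = (\varepsilon_\chi \circ \vartheta_n)(x^v)$, where $v$ kills the global-unit obstruction of $W$ and $\varepsilon_\chi$ is the $\chi$-lift from Lemma \ref{lift}. Taking $C$ to be a preimage of $C_1$ in $A(\M)$, Theorem \ref{tchebotarev} produces an unramified prime $\mathfrak{Q}_1$ satisfying (i) and (ii). Since $\xi$ is a global unit, Proposition \ref{kappa-properties}(iii) applies and gives
\[v_{\mathfrak{Q}_1}(\kappa(\q_1))\,\mathfrak{Q}_1 = [\kappa(\q_1)]_{\q_1} = \varphi_{\q_1}(\kappa(1)) = \varphi_{\q_1}(\xi) = 2^d u\,\psi(\xi)\,\mathfrak{Q}_1 \pmod{2^n},\]
and projecting to the $\chi$-component, the identity $n\circ\varepsilon_\chi = |H|$ from Lemma \ref{lift} yields \eqref{iiia}.

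For the inductive step I would assume $\mathfrak{Q}_1,\dots,\mathfrak{Q}_{i-1}$ have been built, set $\mathfrak{a}_{i-1} = \prod_{j<i}\q_j$, and iterate \eqref{iiib} $i-2$ times to obtain an expression
\[\Bigl(\prod_{j\le i-2} g_j\Bigr)\bigl(v_{\mathfrak{Q}_{i-1}}(\kappa(\mathfrak{a}_{i-1}))\bigr)_{\chi} = D_i\,(\gamma'_{n_0}-1)^{t_i}\,h'_{\chi},\]
with $D_i$ and $t_i = c_1^{i-1}$ as in the statement. I then apply Lemma \ref{tecnical} with $E = 2^{c_3+d}$, $\eta = (\gamma'_{n_0}-1)^{t_i}$, $g = g_{i-1}$, $\mathfrak{A} = \mathfrak{a}_{i-1}$, and $x = \kappa(\mathfrak{a}_{i-1})$; condition (i) follows from the choice of $C_{i-1}$ via Lemma \ref{pseudoA} (the annihilator of $t(C_{i-1})$ divides $g_{i-1}/(2^{c_3+d},g_{i-1})$), condition (ii) is also immediate from Lemma \ref{pseudoA}, and condition (iii) is arranged by picking $M = 2^l$ larger than $|A_\chi(\M)|$ times a bound on the quotient involving $D_i$ and $h'_\chi$. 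This produces $\psi_i$; composing with $\varepsilon_\chi$ and the projection $\Pi_\chi$ and applying Theorem \ref{tchebotarev} yields $\mathfrak{Q}_i$. A final use of Proposition \ref{kappa-properties}(ii) (which is the appropriate case since $\q_i \mid \mathfrak{a}_i$ and $\mathfrak{a}_i/\q_i = \mathfrak{a}_{i-1} \neq (1)$) computes
\[v_{\mathfrak{Q}_i}(\kappa(\q_1\cdots\q_i))\,\mathfrak{Q}_i = \varphi_{\q_i}(\kappa(\mathfrak{a}_{i-1})) = 2^d u\,\psi(\kappa(\mathfrak{a}_{i-1}))\,\mathfrak{Q}_i,\]
and projecting to the $\chi$-component gives \eqref{iiib}.

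The main obstacle, beyond the careful choice of $M$ at each step, is verifying hypothesis (i) of Lemma \ref{tecnical} at every inductive stage: one must use the specific form of $t(C_{i-1}) = (0,\dots,2^{c_3},\dots,0)$ together with the $2^d$-factor from Theorem \ref{tchebotarev} to ensure that the annihilator of the projected class $C_\chi$ is cut out by $g_{i-1}$ up to the explicit power of $2$ being absorbed into $E$. Keeping track of the accumulating factors $|H|$, $2^{c_2+d+(i-2)(2d+c_3)}$, and $(\gamma'_{n_0}-1)^{c_1+\cdots+c_1^{i-2}}$ and bounding them uniformly by enlarging $c_1$ to $c_1^{i-1}$ is essentially bookkeeping, but it is what makes the relation \eqref{iiib} clean enough to be iterated down to the characteristic-ideal divisibility in the next section.
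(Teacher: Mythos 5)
Your proposal is correct and follows essentially the same approach as the paper: it uses Lemma \ref{thetamap} to pick $\xi$, Lemma \ref{euler} to embed it in an Euler system, Theorem \ref{tchebotarev} with $\psi = \varepsilon_\chi\circ\vartheta_n$ to build $\mathfrak{Q}_1$, and then the identical inductive scheme with Lemma \ref{tecnical} (same choice of $E$, $\eta$, $g$, $M$) and Proposition \ref{kappa-properties} to produce the remaining $\mathfrak{Q}_i$ and derive \eqref{iiia}--\eqref{iiib}. The only cosmetic difference is that you make explicit which case of Proposition \ref{kappa-properties} applies at each stage (iii) for $i=1$, (ii) for $i\ge 2$), a distinction the paper leaves implicit.
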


To derive a relation between $h_{\chi}$ and $\prod_{i=1}^s\g_i$ we need the following result which is proved in \cite[Theorem 1]{mu} and \cite[Theorem 1.1]{OV}. In the case of $\L$ being the Hilbert class field and $\K=\Q(\sqrt{-q})$ with $q$ a prime congruent to $7$ modulo $8$  this is \cite[Theorem 1.1]{ChoiKeLi}.
\begin{theorem}
\label{mu1}
Let $\M/\K$ be an arbitrary abelian extension and $\Omega/\K_{\infty}\M$ be the maximal $p$-abelian $\p$-ramified extension of $\M\K_{\infty}$ then $Gal(\Omega/\K_{\infty}\M)$ is finitely generated as $\Z_p$-module.
\end{theorem}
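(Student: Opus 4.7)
The statement is equivalent to the vanishing of the $\mu$-invariant of the finitely generated torsion $\Lambda$-module $X = \Gal(\Omega/\K_\infty \M)$, since a finitely generated torsion $\Lambda$-module is $\Z_p$-finitely generated precisely when $\mu = 0$. The plan is to reduce this to two separate $\mu = 0$ statements via a class field theory exact sequence.

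First, I would set up the standard exact sequence for the maximal $\p$-ramified pro-$p$ abelian extension of $\K_\infty \M$,
\[
0 \to U_\infty / \overline{E}_\infty \to X \to A_\infty \to 0,
\]
where $U_\infty$ is the projective limit of principal local units at primes above $\p$ in the tower $\K_\infty\M/\K$, $\overline{E}_\infty$ is the $\p$-adic closure of the global units in $U_\infty$, and $A_\infty$ is the projective limit of the $p$-parts of the ideal class groups. Since $\mu$-invariants are additive on short exact sequences of finitely generated torsion $\Lambda$-modules, it would suffice to establish $\mu(U_\infty/\overline{E}_\infty) = 0$ and $\mu(A_\infty) = 0$.

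The local piece I expect to be routine. Because $p$ splits in $\K/\Q$, the completion $\K_\p$ equals $\Q_p$, so the local tower above $\p$ inside $\K_\infty \M$ is an abelian pro-$p$ extension of $\Q_p$. By Coleman's theory of power series, or equivalently by the explicit Iwasawa computation of principal local units in $\Z_p$-extensions of local fields, $U_\infty$ is pseudo-isomorphic to a free cyclic $\Lambda$-module, which forces $\mu(U_\infty) = 0$ and hence $\mu(U_\infty/\overline{E}_\infty) = 0$.

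The main obstacle will be the vanishing of $\mu(A_\infty)$ for the $\Z_p$-extension $\K_\infty\M/\M$ unramified outside $\p$. For imaginary quadratic fields with split $p$ this is the Gillard--Schneps theorem, proved by a Ferrero--Washington-style argument on the mod-$p$ reduction of the Coleman power series attached to the elliptic units, combined with a descent from the two-variable $\p$-adic $L$-function of the type constructed in Section 2. The case $p = 2$ requires additional care because of extra $2$-torsion phenomena and the failure of some splitting arguments used for odd $p$; the form of the statement needed here is exactly what is carried out in \cite[Theorem 1]{mu} and \cite[Theorem 1.1]{OV}, which I would invoke as a black box rather than reprove. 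Granting that input, the exact sequence above immediately yields $\mu(X) = 0$, so that $X$ is $\Z_p$-finitely generated.
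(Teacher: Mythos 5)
Your reformulation of the statement as vanishing of the $\mu$-invariant of $X$ is correct, and the exact sequence
\[
0 \to U_\infty/\overline{E} \to X \to A_\infty \to 0
\]
is indeed the one implicit in the paper (it is the four-term sequence at the start of Section 4 after quotienting out $\overline{E}/\overline{C}$). However, there is a genuine gap in what you call the ``local piece.'' You argue that since $U_\infty$ is pseudo-isomorphic to a free cyclic $\Lambda$-module, $\mu(U_\infty)=0$ ``forces'' $\mu(U_\infty/\overline{E})=0$. That inference is false: the $\mu$-invariant of a quotient can strictly exceed that of the ambient module — $\Lambda$ has $\mu=0$, yet $\Lambda/2\Lambda$ has $\mu=1$. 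In fact $\mu(U_\infty/\overline{E})=0$ is essentially the vanishing of the $\mu$-invariant of the Katz $\p$-adic $L$-function: since $\overline{C}\subset\overline{E}$, the module $U_\infty/\overline{E}$ is a quotient of $U_\infty/\overline{C}$, and the Coleman map identifies the latter with the $L$-function (cf.\ Lemma~\ref{iotapseudo} and Corollary~\ref{cor:lfunction}). So the ``routine local computation'' is precisely the nontrivial analytic input of \cite{OV}; nothing about it is routine.

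Once you see this, the exact-sequence detour also becomes unnecessary, and your scheme in fact reverses the actual flow of implications. The references you invoke for $\mu(A_\infty)=0$, namely \cite[Theorem~1]{mu} and \cite[Theorem~1.1]{OV}, are exactly the ones the paper cites for Theorem~\ref{mu1} itself: they establish the statement directly for $X$ (or for the $L$-function, from which the statement about $X$ follows by the class-number-formula comparison of Section~4.1), not the intermediate statement about $A_\infty$. The paper gives no internal proof of Theorem~\ref{mu1}; it is a direct citation, and $\mu(A_\infty)=0$ is then deduced as Corollary~\ref{mu_2} from the observation that $A_\infty$ is a quotient of $X$ — the opposite direction of your reduction. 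Attempting instead to prove $\mu(A_\infty)=0$ first, without the Euler-system divisibility (which in this paper already presupposes Theorem~\ref{mu1} via Lemma~\ref{reduction}), would require an independent Ferrero--Washington-type argument for these $\Z_2$-extensions that you have not supplied.
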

\begin{corollary}
\label{mu_2}
Let $\M$ be as above and consider $\H/\K_{\infty}\M$ (the maximal $p$-abelian unramified extension of $\K_{\infty}\M$). Then $A_{\infty}=\Gal(\H/\K_{\infty}\M)$ is finitely generated as a $\Z_p$-module
\end{corollary}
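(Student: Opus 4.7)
The plan is very short because this corollary is essentially an immediate consequence of Theorem \ref{mu1}. The key observation is that any everywhere-unramified extension is, in particular, $\p$-ramified (in the sense of being unramified outside $\p$), so the maximal $p$-abelian unramified extension $\mathbb{H}$ of $\K_{\infty}\M$ is contained in the maximal $p$-abelian $\p$-ramified extension $\Omega$ of $\K_{\infty}\M$.

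Given this inclusion $\mathbb{H}\subset \Omega$, restriction of automorphisms induces a natural surjection
\[
\pi\colon \Gal(\Omega/\K_{\infty}\M)\twoheadrightarrow \Gal(\mathbb{H}/\K_{\infty}\M).
\]
By class field theory the right-hand side is canonically identified with $A_{\infty}=\lim_{\infty\leftarrow n} A_n$, while Theorem \ref{mu1} asserts that the left-hand side is finitely generated as a $\Z_p$-module. Since a quotient of a finitely generated $\Z_p$-module is again finitely generated, the corollary follows.

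No step is really an obstacle here; the content has already been placed in Theorem \ref{mu1}. The only thing to be careful about is to make the identification $A_{\infty}\cong \Gal(\mathbb{H}/\K_{\infty}\M)$ explicit (via the Artin map at each finite layer and passage to the inverse limit, which is compatible with the norm maps defining $A_{\infty}$), and to note that containment $\mathbb{H}\subset \Omega$ is automatic from the definitions, so that the surjectivity of $\pi$ is immediate.
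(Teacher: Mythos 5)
Your argument is exactly the paper's: note $\H\subset\Omega$, deduce that $\Gal(\H/\K_{\infty}\M)$ is a quotient of $\Gal(\Omega/\K_{\infty}\M)$, and invoke Theorem \ref{mu1} together with the fact that quotients of finitely generated $\Z_p$-modules are finitely generated. The proposal is correct and takes the same approach; the extra remark about the Artin-map identification is a reasonable clarification but not a departure.
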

\begin{proof}
$\Gal(\H/\K_{\infty}\M)$ is a quotient of $Gal(\Omega/\K_{\infty}\M)$ and therefore finitely generated.
\end{proof}
\begin{theorem}
\label{divides11}
$\textup{Char}(A_{\infty,\chi})\mid \textup{Char}((\overline{E}/\overline{C})_{\chi})$.
\end{theorem}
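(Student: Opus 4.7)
The plan is to combine the telescoping identity produced by the preceding theorem with the principal-ideal relation coming from the fact that each $\kappa(\mathfrak{a})$ represents a principal element of $\L_n^{\times}/(\L_n^{\times})^{M}$. By the pseudo-isomorphism \eqref{pseudo} and Lemma \ref{pseudoA} one has $\textup{Char}(A_{\infty,\chi})=\prod_{i=1}^{k}g_i$, so the task reduces to proving $\prod_{i=1}^{k}g_i\mid h_{\chi}$.

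First, I would choose classes $C_1,\ldots,C_k\in A_{\chi}(\M)$ with $t(C_i)=(0,\ldots,0,2^{c_3},0,\ldots,0)$ (the $2^{c_3}$ in the $i$-th coordinate) and a further class $C_{k+1}$ to be specified below, then apply the preceding theorem for $n$ sufficiently large to produce primes $\mathfrak{Q}_1,\ldots,\mathfrak{Q}_{k+1}$ satisfying \eqref{iiia}--\eqref{iiib}. Setting $V_i=v_{\mathfrak{Q}_i}(\kappa(\q_1\cdots\q_i))$ and iterating \eqref{iiib} from $i=2$ through $i=k+1$, combined with the base case \eqref{iiia}, yields modulo $2^n$ an identity of the form
\[
\prod_{j=1}^{k}g_{j}\cdot V_{k+1,\chi}\;\equiv\;u\cdot |H|^{k+1}\cdot 2^{A}\cdot(\gamma'_{n_0}-1)^{B}\cdot h'_{\chi},
\]
where $u$ is a unit in $\Lambda_{\chi,m+n-m'}$ and $A,B$ are explicit constants depending only on the data $c,c_1,c_2,c_3,d$ of the preceding theorem.

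Second, because $\kappa(\q_1\cdots\q_{k+1})$ is principal in $\L_n^{\times}/(\L_n^{\times})^{M}$, its class vanishes in $(A/MA)_{\chi}$, which using $[\mathfrak{Q}_i]=2^d C_i$ furnishes the linear relation $\sum_{i=1}^{k+1}v_{\mathfrak{Q}_i}(\kappa(\q_1\cdots\q_{k+1}))\cdot 2^d C_{i,\chi}\equiv 0\pmod{M}$. The term at $i=k+1$ is $V_{k+1,\chi}\cdot 2^d C_{k+1,\chi}$, while by Proposition \ref{kappa-properties} the remaining terms can be expressed in terms of Kolyvagin derivatives at earlier partial products. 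Choosing $C_{k+1}$ so that its image in $A_{\chi,n}$ is in sufficiently general position with respect to $C_1,\ldots,C_k$ (using the freedom afforded by Tchebotarev in Theorem \ref{tchebotarev}) should force $V_{k+1,\chi}$ to be a unit in $\Lambda_{\chi,m+n-m'}$ up to bounded powers of $2$ and of $\gamma'_{n_0}-1$.

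Substituting this invertibility into the displayed identity and letting $n\to\infty$ yields $\prod_{i=1}^{k}g_i\mid h'_{\chi}$ in $\Lambda_{\chi}$ up to bounded powers of $2$ and of $\gamma'_{n_0}-1$. Theorem \ref{mu1} and Corollary \ref{mu_2} guarantee that neither $\textup{Char}(A_{\infty,\chi})$ nor $h_{\chi}$ is divisible by $2$, and by Lemma \ref{thetamap}(i) the element $h'_{\chi}$ is coprime to every $\gamma'_v-1$, so these extraneous factors cancel; since $h'_{\chi}\mid h_{\chi}$ I conclude $\textup{Char}(A_{\infty,\chi})\mid\textup{Char}((\overline{E}/\overline{C})_{\chi})$. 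The crux is the general-position step: while Proposition \ref{kappa-properties} expresses each $v_{\mathfrak{Q}_i}(\kappa(\q_1\cdots\q_{k+1}))$ for $i<k+1$ in terms of the Euler-system data at smaller stages, the full argument requires a delicate tracking of how the finite-level decomposition $A_{\chi,n}\to\bigoplus_i\Lambda_{\chi,m+n-m'}/\overline{g}_i$ of Lemma \ref{pseudoA} interacts with the chain of prime ideals produced by Theorem \ref{tchebotarev}.
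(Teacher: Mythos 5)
Your opening step is exactly the paper's: iterate \eqref{iiia} and \eqref{iiib} to obtain
\[
\prod_{j=1}^{k}g_{j}\cdot v_{\mathfrak{Q}_{k+1}}\bigl(\kappa(\q_1\cdots\q_{k+1})\bigr)_{\chi}\equiv \eta\, h'_{\chi}\pmod{2^n}
\]
in $\Lambda_{\chi,m+n-m'}/2^n\Lambda_{\chi,m+n-m'}$, where $\eta$ collects the powers of $2$, $|H|$ and $(\gamma'_{n_0}-1)$. But from here you go astray: you do not need $V_{k+1,\chi}:=v_{\mathfrak{Q}_{k+1}}(\kappa(\q_1\cdots\q_{k+1}))_{\chi}$ to be a unit, and the ``principal-ideal relation'' plus ``general position'' argument you invoke to force invertibility is both unnecessary and (as you yourself acknowledge) incomplete. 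The identity above, with $V_{k+1,\chi}$ being \emph{any} element of $\Lambda_{\chi,m+n-m'}/2^n$, already exhibits $\eta h'_{\chi}$ as a multiple of $\prod g_i$ in that quotient ring. This is precisely why the preceding theorem allows $C_{k+1}$ to be arbitrary: the final prime $\mathfrak{Q}_{k+1}$ is only needed to make $\kappa(\q_1\cdots\q_{k+1})$ a unit (so Proposition~\ref{kappa-properties} applies) and to produce \emph{some} witness $z_n=V_{k+1,\chi}$, not a controlled one.

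The missing, and decisive, step of the paper's proof is a compactness argument: for each $n$ lift the witness $z_n$ to $\Lambda_{\chi}$, extract a convergent subsequence, and pass to the limit to obtain $\prod g_i \mid \eta\,h'_{\chi}$ in $\Lambda_{\chi}$ itself, since $\prod g_i z_n-\eta h'_\chi$ lies in ideals $(2^n,\gamma'_{m+n-m'}-1)$ shrinking to $(0)$. Then, by Lemma~\ref{leo} (which handles the $(\gamma'_{n_0}-1)$-factors) and Corollary~\ref{mu_2} (which kills the $2$-power and $|H|$-factors), $\textup{Char}(A_{\infty,\chi})=\prod g_i$ is coprime to $\eta$, so $\prod g_i\mid h'_{\chi}\mid h_{\chi}$. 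Note also that your appeal to $h_{\chi}$ not being divisible by $2$ is neither established at this stage nor needed; the coprimality you want is between $\textup{Char}(A_{\infty,\chi})$ and $\eta$, which is exactly what Lemma~\ref{leo} and Corollary~\ref{mu_2} furnish. Removing the general-position detour and inserting the compactness step would turn your outline into a correct proof.
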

\begin{proof}The main argument of this proof is analogous to \cite[page 371]{washington}.
From \eqref{iiia} and \eqref{iiib} we obtain that $\prod_{i=1}^kg_iv_{\mathfrak{Q}_{k+1}}(\kappa(q_1\dots\q_{k+1}))_{\chi}=\eta h'_{\chi}   \mod 2^n$, where $\eta=\tilde{u}\vert H\vert ^{k+1}2^{k(2d+c_3)+d+c_2}(\gamma'_{n_0}-1)^{c_1+\sum_{j=1}^{k}c_1^j}$ for some unit $\tilde{u}$.  It follows that $\prod_{i=1}^kg_i$ divides $ \eta h'_{\chi}$ in $\Lambda_{\chi,m+n-m'}/2^n\Lambda_{\chi,m+n-m'}$. For every $n$ we can find an element $z_n$ such that $\prod_{i=1}^kg_i z_n=\eta h'_{\chi}$ in $\Lambda_{\chi,n}/2^n\Lambda_{\chi,m+n-m'}$. The $z_n$'s have a converegent subsequence and we obtain that $\prod_{i=1}^kg_i\mid \eta h'_{\chi}$ in $\Lambda_{\chi}$. By Lemma \ref{leo} and Corollary \ref{mu_2} $\textup{Char}(A_{\infty,\chi})$ is coprime to $\eta$ and the claim follows.
\end{proof}
\begin{remark}
Vigui\'{e} proves in \cite{vg2} as well that there is a power of $p$ such that $\textup{Char}(A_{\infty,\chi})\mid p^v\textup{Char}((\overline{E}/\overline{C})_{\chi})$. Using Corollary \ref{mu_2} this implies Theorem \ref{divides11}. He follows the proofs of Bley very closely as well but he uses a slighly different version of Theorem \ref{tchebotarev} and does not use the relation between $\Gamma$ and $\Gamma'$ as we did here in Section \ref{sec;components}. 
\end{remark}
\begin{corollary}
\label{dividesganz}
$\textup{Char}(A_{\infty})\mid \textup{Char}(\overline{E}/\overline{C})$
\end{corollary}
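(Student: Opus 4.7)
The plan is to deduce the global divisibility in $\Lambda$ from the character-by-character divisibility of Theorem \ref{divides11} by assembling the $\chi$-components into a statement over $\Lambda$. The key leverage comes from the vanishing of the $\mu$-invariants of $A_{\infty}$ and of $\overline{E}/\overline{C}$ established in Theorem \ref{mu1} and Corollary \ref{mu_2}.

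First I would extend scalars: let $\O$ be the ring of integers of a finite unramified extension of $\Q_2$ containing the values of every character of $H$, and set $\tilde\Lambda = \O[[\Gamma'\times H]]$. Since $\tilde\Lambda$ is faithfully flat over $\Lambda$, divisibility of characteristic ideals in $\tilde\Lambda$ will descend to $\Lambda$. For a finitely generated torsion $\Lambda$-module $M$ whose $\mu$-invariant vanishes, the standard character-decomposition argument produces a pseudo-isomorphism
\[
M\otimes_{\Lambda}\tilde\Lambda \;\sim\; \bigoplus_{\chi}M_{\chi}
\]
of $\tilde\Lambda$-modules, the sum running over characters $\chi$ of $H$, and consequently
\[
\textup{Char}_{\Lambda}(M)\,\tilde\Lambda \;=\; \prod_{\chi}\textup{Char}_{\Lambda_{\chi}}(M_{\chi})
\]
inside $\tilde\Lambda \cong \prod_{\chi}\Lambda_{\chi}$. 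I would apply this to both $M=A_{\infty}$ and $M=\overline{E}/\overline{C}$; the hypothesis on $\mu$-invariants is supplied by the results quoted above.

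Then I would invoke Theorem \ref{divides11} to get $\textup{Char}(A_{\infty,\chi})\mid\textup{Char}((\overline{E}/\overline{C})_{\chi})$ in $\Lambda_{\chi}$ for every character $\chi$ of $H$. Taking the product over all $\chi$ and using the identity above for both modules gives $\textup{Char}(A_{\infty})\,\tilde\Lambda \mid \textup{Char}(\overline{E}/\overline{C})\,\tilde\Lambda$ in $\tilde\Lambda$, and faithful flatness then yields the claimed divisibility in $\Lambda$.

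The main obstacle is precisely the one that distinguishes the $p=2$ case treated here from the odd case: when $|H|$ is even, $\Z_2[H]$ is not semisimple, so the decomposition $M\otimes_{\Lambda}\tilde\Lambda\cong\bigoplus_{\chi}M_{\chi}$ is only an isomorphism up to pseudo-null error, and without further input one might worry that these pseudo-null discrepancies break the passage from componentwise divisibility to global divisibility. This is exactly where the $\mu$-invariant vanishing is essential: since neither $\textup{Char}(A_{\infty})$ nor $\textup{Char}(\overline{E}/\overline{C})$ is divisible by $2$, pseudo-null contributions cannot contaminate these characteristic ideals, and the product decomposition above holds as an equality of ideals. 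Once this is confirmed, the corollary follows formally from Theorem \ref{divides11}.
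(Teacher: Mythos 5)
Your proposal takes essentially the same route as the paper: derive the global divisibility from the $\chi$-componentwise divisibility of Theorem \ref{divides11} together with the fact that $\textup{Char}(A_{\infty})$ is coprime to $2$. The paper's own proof is a one-liner ("As Theorem \ref{divides11} holds for all characters and $\textup{Char}(A_{\infty})$ is coprime to $2$ this is immediate"), and your write-up correctly spells out the mechanism behind that claim.

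One small inaccuracy worth flagging: you invoke "the vanishing of the $\mu$-invariants of $A_{\infty}$ \emph{and of $\overline{E}/\overline{C}$} established in Theorem \ref{mu1} and Corollary \ref{mu_2}," but those results only give $\mu=0$ for $X$ and hence for $A_{\infty}$; nothing cited establishes $\mu(\overline{E}/\overline{C})=0$ at this stage, and the paper's proof pointedly does not claim it. Fortunately this hypothesis is not needed. The kernel and cokernel of the natural map $M\otimes_{\Z_2}\O\to\bigoplus_{\chi}M_{\chi}$ are annihilated by $|H|$ for \emph{any} finitely generated torsion $\Lambda$-module $M$, so the discrepancy between $\textup{Char}(M)$ and $\prod_{\chi}\textup{Char}(M_{\chi})$ is in every case supported at the prime $(2)$. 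Hence at every height-one prime $\mathfrak{P}\neq(2)$ one has the product formula exactly, for both $A_{\infty}$ and $\overline{E}/\overline{C}$, and then the character-by-character divisibility transfers. Only at $\mathfrak{P}=(2)$ could the argument fail, and that case is trivial because $\textup{Char}(A_{\infty})$ has no factor of $2$. So you should drop the appeal to $\mu(\overline{E}/\overline{C})=0$; the argument is cleaner, matches the paper, and avoids citing a fact that has not been proved.
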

\begin{proof}
As Theorem $\ref{divides11}$ holds for all characters and $\textup{Char}(A_{\infty})$ is coprime to $2$ this is immediate.
\end{proof}

\section{Characteristic ideals and the main conjecture}
Consider the exact sequence
\[0\to\overline{E}/\overline{C}\to U_{\infty}/\overline{C}\to X\to A_{\infty}\to 0,\] where $X=\Gal(\Omega/\L_{\infty})$. Then 
\begin{equation} \label{chareq}\textup{Char}(A_{\infty}) \textup{Char}(U_{\infty}/\overline{C})=\textup{Char}(X) \textup{Char}(\overline{E}/\overline{C}).
\end{equation} From Corollary \ref{dividesganz} we deduce \begin{equation}
\label{teiler}
\textup{Char}(X)\mid \textup{Char}(U_{\infty}/\overline{C}).\end{equation} In the following we will establish a relation between $p$-adic $L$-functions and elliptic units to show that $\textup{Char}(X)$ is in fact equal to $\textup{Char}(U_{\infty}/\overline{C})$.

Let $u\in U_{\infty}$ and let $g_u(w)$ be the Coleman power series of $u$ (see \cite[I Theorem 2.2]{Shalit}). Let $\tilde{g}_u(W)=\log g_u(W)-\frac{1}{p}\sum_{w\in \widehat{E}_{\p}}\log g_u(W\oplus w)$. There exists a measure $\nu_u$ on $\Z_p^{\times}$ having $\tilde{g}_u\circ \beta^v$ as characteristic series \cite[I 3.4]{Shalit}. Recall that $\mathcal{D}_{\p}=\I_{\p}(\zeta_m)$ and let $\Lambda(\mathcal{D}_{\p},\Gamma'\times H)$ be the algebra of $\mathcal{D}_{\p}$-valued measures on $\Gamma'\times H$.
Define
\[\iota(\f) \colon U_{\infty}\widehat{\otimes}\mathcal{D}_{\p}\to \Lambda(\mathcal{D}_{\p},\Gamma'\times H),\quad u\mapsto \sum_{\sigma\in \Gal(\F/\K)}\nu_{\sigma u}\circ \sigma.\]
Note that this construction of measures coincides with the one from section 2 for elliptic units. 
\begin{lemma}
\label{iotapseudo}
$\iota(\f)$ is a pseudoisomorphism.
\end{lemma}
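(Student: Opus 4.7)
The plan is to prove $\iota(\f)$ is a pseudo-isomorphism by reducing to a statement on each $\chi$-isotypic component, where both source and target are free of rank one over $\Lambda_\chi$ after tensoring with $\Q_2$, and then exhibiting the elliptic-unit measures of Theorem \ref{shalit412} as an explicit generator of the image. First I would check that $\iota(\f)$ is $\Lambda$-linear, with $\Lambda = \Z_2[[\Gamma' \times H]]$ acting on the source through its action on $U_\infty$ (the Coleman power series transforming $\Gamma'$-equivariantly) and on the target by translation; the sum $\sum_{\sigma \in \Gal(\F/\K)} \nu_{\sigma u} \circ \sigma$ is arranged precisely so that the $H$-action is respected, since conjugation by $\sigma \in H$ permutes the summands consistently.

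Next, for each character $\chi$ of $H$, the component $U_{\infty,\chi} \otimes_{\Z_2} \Q_2$ is free of rank one over $\Lambda_\chi \otimes_{\Z_2} \Q_2$ by Lemma \ref{pseudocyclic}, and the corresponding component of $\Lambda(\mathcal{D}_\p, \Gamma' \times H)$ is free of rank one over $\Lambda_\chi \otimes_{\Z_2} \mathcal{D}_\p$ via Mahler's isomorphism applied to each coset of $\Gamma' \times H$. Hence it suffices to show that $\iota(\f)_\chi$ becomes an isomorphism after tensoring with $\Q_2$, or equivalently that its kernel and cokernel are $\Lambda_\chi$-torsion of finite support. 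For finiteness of the kernel I would argue that vanishing of $\nu_{\sigma u}$ for all $\sigma \in \Gal(\F/\K)$ forces each $\tilde g_{\sigma u}$ to vanish identically; by the defining formula $\tilde g_u(w) = \log g_u(w) - \frac{1}{p}\sum_{w' \in \widehat E_\p} \log g_u(w \oplus w')$ and standard norm-coherence arguments for Coleman power series, this implies that $g_u$ is (up to a trivial factor) a root of unity, so such $u$ form a finite torsion subgroup of $U_\infty \widehat\otimes \mathcal{D}_\p$.

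For the cokernel I would exhibit an explicit generator. Theorem \ref{shalit412} produces the measure $\nu$ together with the elliptic-unit measures $\nu_\alpha = (N\alpha - \sigma_\alpha)\nu$, all of which lie in the image of $\iota(\f)$ by construction, since the comment after Lemma \ref{ymeasure} identifies the measures obtained from elliptic units with exactly the Coleman-type measures under consideration. The final assertion of Theorem \ref{shalit412}, namely that the elements $\chi(\nu_\alpha/\nu)$ generate the trivial ideal in $\Lambda_\chi \otimes \mathcal{D}_\p$, then guarantees that on every non-trivial $\chi$-component the image contains $\nu$ itself, which is a generator of the target up to a pseudo-null piece. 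Combined with the rank-one description above, this forces the cokernel on each $\chi$-component to be finite, and hence $\iota(\f)$ is a pseudo-isomorphism of $\Lambda$-modules. The main obstacle I foresee is the kernel analysis: one must make precise that units $u \in U_\infty$ with $\tilde g_{\sigma u} \equiv 0$ for every $\sigma$ are forced into a finite torsion subgroup, which requires invoking Coleman's injectivity results along the lines of \cite[Chapter I]{Shalit} and carefully keeping track of the trace part removed by the factor $\frac{1}{p}\sum_{w'}$ in the definition of $\tilde g_u$.
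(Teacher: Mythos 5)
The paper's proof is a one-line citation to \cite[I Theorem 3.7]{Shalit}, which gives a structure theorem for the Coleman map $U_{\infty}\widehat\otimes\mathcal{D}_{\p}\to\Lambda(\mathcal{D}_{\p},\cdot)$: the kernel is controlled by $2$-power roots of unity in the local completions and the cokernel is pseudo-null; the only thing left to check is that the completions of $\L_{\infty}$ at primes above $\p$ contain only finitely many such roots of unity, which the paper gets from \cite[proof of Prop.~4.3.10]{Kezuka-thesis}. Your proposal instead tries to rebuild this from scratch, and the cokernel half of your argument has a genuine gap.

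The cokernel step is the problem. You argue that since $\chi(\nu_{\alpha}/\nu)$ generate the unit ideal, the image of $\iota(\f)$ contains $\chi(\nu)$, and ``$\nu$ is a generator of the target up to a pseudo-null piece.'' That last claim is false and in fact would trivialize the main conjecture. The elements $\nu_{\alpha}$ are the images of the elliptic units $\overline{C}$, so what you have shown is that the image of $\overline{C}$ under $\chi\circ\iota(\f)$ is (pseudo-)generated by $\chi(\nu)$. But $\Lambda(\mathcal{D}_{\p},\Gamma'\times H)_{\chi}/\bigl(\chi(\nu)\bigr)$ is precisely the module whose characteristic ideal computes $\textup{Char}\bigl((U_{\infty}/\overline{C})_{\chi}\bigr)$ (cf.\ Corollary \ref{cor:lfunction}); this is the $\p$-adic $L$-function, not a pseudo-null module. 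Knowing the image of a submodule $\overline{C}\subset U_{\infty}$ is $(\chi(\nu))$ tells you nothing useful about the cokernel of $\iota(\f)$ restricted to all of $U_{\infty}$. The pseudo-surjectivity of $\iota(\f)$ is a structural fact about Coleman power series of \emph{all} norm-coherent local units, not a consequence of the elliptic-unit interpolation formula, and it is exactly what de Shalit's Theorem 3.7 supplies.

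Your kernel analysis is in the right spirit (the kernel is tied to $p$-power roots of unity in the tower of local fields), but as written it is only a heuristic; making it precise again amounts to invoking the Coleman-theoretic structure theorem, at which point one is reproducing de Shalit's result rather than bypassing it. The cleanest repair is to do what the paper does: cite the general structure theorem and reduce to the finiteness of $2$-power roots of unity in the $\p$-adic completion of $\L_{\infty}$.
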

\begin{proof}
By \cite[I Theorem 3.7]{Shalit} it suffices to prove that the completion of $\L_{\infty}$ at all primes above $\p$ contains only finitely many $2$-power roots of unity. But this follows from \cite[proof of Proposition 4.3.10]{Kezuka-thesis}. 
\end{proof}

For every $\g\mid \f $ there is a map $\iota(\g)\colon U_{\infty}(\K(\g\p^{\infty}))\to\Lambda(\mathcal{D}_{\p},\Gal(\K(\g\p^{\infty}/\K)))$. 
Note that there are natural restriction and corestriction maps $\pi_{\f,\g}$ and $\eta_{\f,\g}$ such that $\pi_{\f,\g}\circ\iota(\f)=\iota(\g)\circ N_{\f,\g}$ and $\iota(f)\circ \textup{inclusion}=\eta_{\f,\g}\circ \iota(\g)$, where inclusion is the natural map $U_{\infty}(\K(\g\p^{\infty}))\to U_{\infty}$ (see \cite[page 100]{Shalit} for details).
\begin{proposition}
Let $\chi$ be a character of $H$ and $\g$ the prime to $\p$-part of the conductor. The module $\chi\circ\iota(\f) (\overline{C(\f))}$ is pseudoisomorphic to $\chi(\nu(\g) \Lambda(\mathcal{D}_{\p},\Gamma'\times H))$, if $\chi$ is non-trivial. If $\chi$ is trivial $\chi\circ \iota(\f) (\overline{C(\f)})$ is pseudoisomorphic to $(\gamma'-1)\chi(\nu(1)\Lambda(\mathcal{D}_{\p},\Gamma'\times H))$.
\end{proposition}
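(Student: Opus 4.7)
The plan is to compute $\chi\circ\iota(\f)$ on each graded piece of the decomposition $\overline{C(\f)}=\prod_{\mathfrak{h}\mid\f}\overline{C_{\mathfrak{h}}}$ and then assemble the contributions, using Theorem~\ref{shalit412} to identify the top piece and \eqref{smallermeas} to relate the smaller-conductor pieces.

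First I would treat the top conductor $\mathfrak{h}=\f$. For a generator $c=\xi_{\alpha,\sigma}(P^{\sigma}_{n+2}+\Lambda(\mathfrak{a})\rho)$ of $C_{\f,n}$, the definition of $\iota(\f)$ and the passage from $Y_{\alpha,\mathfrak{a}}$ to the measure $\nu_{\alpha}$ done in Section~2 identifies $\iota(\f)(c)$ with the $(\nu_{\alpha,\mathfrak{a}}\circ\sigma_{\mathfrak{a}})$-piece of $\nu_{\alpha}=(N\alpha-\sigma_{\alpha})\nu(\f)$. Summing over the generators, $\iota(\f)(\overline{C_{\f}})$ is the $\Lambda$-submodule generated by $\{(N\alpha-\sigma_{\alpha})\nu(\f):\alpha\}$. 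Applying $\chi$ (non-trivial on $H$), the last assertion of Theorem~\ref{shalit412} says that the elements $\chi(N\alpha-\sigma_{\alpha})$ generate the unit ideal in $\Lambda_{\chi}$, so
\[ \chi\circ\iota(\f)(\overline{C_{\f}})\ \sim\ \chi(\nu(\f))\Lambda_{\chi}. \]

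Next I would handle the smaller conductors. For $\mathfrak{h}\mid\f$ with $\mathfrak{h}\neq\f$, the units of $C_{\mathfrak{h}}$ sit naturally in $U_{\infty}(\K(\mathfrak{h}\p^{\infty}))\hookrightarrow U_{\infty}$, and the compatibility $\iota(\f)\circ\text{inclusion}=\eta_{\f,\mathfrak{h}}\circ\iota(\mathfrak{h})$ gives $\chi\circ\iota(\f)(\overline{C_{\mathfrak{h}}})=\chi\circ\eta_{\f,\mathfrak{h}}\circ\iota(\mathfrak{h})(\overline{C_{\mathfrak{h}}})$. On the group-ring side $\eta_{\f,\mathfrak{h}}$ is multiplication by the norm element $N_{\f/\mathfrak{h}}=\sum_{\tau\in\Gal(\K(\f\p^{\infty})/\K(\mathfrak{h}\p^{\infty}))}\tau$ (followed by a choice of lift), and $\chi(N_{\f/\mathfrak{h}})$ is nonzero precisely when $\chi$ factors through $\Gal(\K(\mathfrak{h}\p^{\infty})/\K)$, i.e.\ when the prime-to-$\p$ conductor $\g$ of $\chi$ divides $\mathfrak{h}$. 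Applying the argument of the previous paragraph at level $\mathfrak{h}$ (replacing $\f$ by $\mathfrak{h}$ throughout) yields $\chi\circ\iota(\mathfrak{h})(\overline{C_{\mathfrak{h}}})\sim\chi(\nu(\mathfrak{h}))$ times the appropriate unit ideal.

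Finally, I would translate everything back to $\nu(\g)$ by means of \eqref{smallermeas}, which after applying $\chi$ reads
\[ \chi(\nu(\mathfrak{h}))\ =\ \chi(\nu(\g))\prod_{\substack{\mathfrak{l}\mid \mathfrak{h}\\ \mathfrak{l}\nmid\g}}\bigl(1-\chi(\sigma_{\mathfrak{l}})^{-1}\bigr)^{-1}\cdot \prod_{\substack{\mathfrak{l}\mid\f\\ \mathfrak{l}\nmid\mathfrak{h}}}\bigl(1-\chi(\sigma_{\mathfrak{l}})^{-1}\bigr) \]
(where the second product comes from undoing the Euler factors that appear when comparing $\nu(\mathfrak{h})$ and $\nu(\g)$ inside $\nu(\f)$). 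Summing the contributions from all $\mathfrak{h}$ with $\g\mid\mathfrak{h}\mid\f$, the Euler factors $1-\chi(\sigma_{\mathfrak{l}})^{-1}$ from different $\mathfrak{l}\mid\f/\g$ combine via inclusion–exclusion so that the total image is pseudoisomorphic to $\chi(\nu(\g))\Lambda_{\chi}$, as claimed. For the trivial character the same argument produces the extra factor $(\gamma'-1)$, since $\nu(1)$ is only a pseudomeasure and one must multiply by $1-\gamma'$ in order to obtain a genuine measure (this is the analogue of the change of definition in Definition~\ref{padicdef}).

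The main obstacle is the bookkeeping in the final step: one must check that the graded contributions from $\overline{C_{\mathfrak{h}}}$ for intermediate conductors $\g\mid\mathfrak{h}\mid\f$ fit together so that the trivial Euler factors cancel exactly, using that $\iota$ is only a pseudo-isomorphism (Lemma~\ref{iotapseudo}) so that all equalities are allowed up to pseudo-null modules.
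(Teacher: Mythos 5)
Your route differs from the paper's, and the difference matters. The paper never decomposes $\overline{C(\f)}$ by conductor nor pushes forward via $\eta_{\f,\mathfrak{h}}$. Instead it goes the other direction: since $\chi$ factors through $\Gal(\K(\g\p^{\infty})/\K)$, one has $\chi\circ\iota(\f)=\chi\circ\pi_{\f,\g}\circ\iota(\f)=\chi\circ\iota(\g)\circ N_{\f,\g}$, so everything is restricted down to level $\g$ in one step; then orthogonality of characters kills all conductors strictly below $\g$ inside $\overline{C(\g)}$, leaving only $\overline{C_{\g}}$; finally the cokernel of $N_{\f,\g}\colon\overline{C(\f)}\to\overline{C(\g)}$ is shown pseudo-null because it is annihilated both by the index $[\K(\f\p^{\infty}):\K(\g\p^{\infty})]$ and by $\prod_{\mathfrak{l}\mid\f,\,\mathfrak{l}\nmid\g}\bigl(1-\chi(\sigma_{\mathfrak{l}})\sigma_{\mathfrak{l}}^{-1}\vert_{\Gamma'}\bigr)$, which are coprime in $\Lambda_{\chi}$. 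Your proposal omits that coprimality argument entirely, but it is the only reason the non-unit index does not survive into the answer.

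Beyond the strategic difference there are two concrete errors. First, your Euler factors are written as scalars $1-\chi(\sigma_{\mathfrak{l}})^{-1}$, but the Frobenius $\sigma_{\mathfrak{l}}$ has a nontrivial $\Gamma'$-component, so the correct factor is $1-\chi(\sigma_{\mathfrak{l}})^{-1}\sigma_{\mathfrak{l}}^{-1}\vert_{\Gamma'}\in\Lambda_{\chi}$; with your scalar version the factor can literally vanish when $\chi(\sigma_{\mathfrak{l}})=1$, which makes the whole comparison ill-defined. Second, the displayed identity relating $\chi(\nu(\mathfrak{h}))$ to $\chi(\nu(\g))$ is wrong: applying \eqref{smallermeas} at level $\mathfrak{h}$ and then restricting to $\Gal(\K(\g\p^{\infty})/\K)$ gives $\chi(\nu(\mathfrak{h}))=\chi(\nu(\g))\prod_{\mathfrak{l}\mid\mathfrak{h},\,\mathfrak{l}\nmid\g}\bigl(1-\chi(\sigma_{\mathfrak{l}})^{-1}\sigma_{\mathfrak{l}}^{-1}\vert_{\Gamma'}\bigr)$ — a product of the factors, not their inverses, and with no second product over $\mathfrak{l}\mid\f$, $\mathfrak{l}\nmid\mathfrak{h}$. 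With the corrected factors the ``inclusion-exclusion'' at the end does not obviously collapse to the unit ideal; you would still need to invoke the coprimality of the index with the Euler factor product, which is exactly the mechanism the paper uses. Finally, a smaller omission: the paper must separately treat the case $\omega_{\g}\neq 1$ (where $\nu(\g)$ is not directly constructible by the method of Section~2 and one passes through $\nu(\g^k)$), and your proposal does not address this.
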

\begin{proof}
Analogous to \cite[III Lemma 1.10]{Shalit}.
As $\chi$ has conductor dividing $\g\p^2$ it follows that $\chi\circ \iota(\f) (\overline{C(\f)})=\chi\circ \pi_{\f,\g}\circ\iota(\f)(\overline{C(\f)})=
\chi\circ\iota (\g)N_{\f,\g}\overline{C(\f)}$.
Assume first that $\g\neq 1$.
 It is immediate that $\sum_{\sigma\in \Gal(\K(\g\p^{\infty}))/\K(\mathfrak{h}\p^{\infty}))}\chi(\sigma)=0$. Hence, \begin{equation}
\label{chi}
\chi\circ \iota(g)(\overline{C_{\g}})=\chi\circ\iota(\g)(\overline{C(\g)}).
\end{equation}  If $\omega_{\g}=1$ we can construct the measure $\nu(\g)$ as in section 2 and obtain that $\iota(\g)(\overline{C_{\g}})$ is the ideal generated by $\mathcal{J}\nu(\g)$ where $\mathcal{J}$ is the ideal generated by all the $\mu_{\alpha}=\nu/\nu_{\alpha}$. If $\omega_{\g}\neq 1$ there exists an integer $k$ such that $\omega_{\g^k}=1$ and then we can define the measure $\nu(\g^k)$. But by \eqref{smallermeas} we have $\nu(\g)$ is the restriction of $\nu(\g^k)$ and $N_{\g^k,\g}$ is surjective on the elliptic units. So in both cases the image under $\iota(\g)$ is precisely $\mathcal{J}\nu(\g)$.

 If the norm $N_{\f,\g}:\overline{C(\f)}\to \overline{C(\g)}$ is not surjective   it follows that the cokernel of the module $\chi\circ \iota(g)\circ N_{\f,\g}(\overline{C(\f)})$ in $\chi\circ \iota(\g)(\overline{C(\g)})$ is annihilated by $[\K(\f\p^{\infty}):\K(\g\p^{\infty})]$ and the product $\prod_{l\mid \f,l\nmid \g} (1-\chi(\sigma_{l})\sigma_l^{-1}\mid_{\Gamma'})$.  These elements are certainly coprime and we see that $\chi\circ\iota(\f)(\overline{C(\f)})\sim \chi\circ \iota(\g)(\overline{C_{\g}})$ due to \eqref{chi}, where $A\sim B$ means that $A$ and $B$ are pseudoisomorphic. But the $\chi(\mu_{\alpha})$ are coprime due to Theorem \ref{shalit412} and the claim follows for $\g\neq (1)$.

\smallskip Assume now that $\g=(1)$. Let $\tau\in \Gal(\K(\p^{\infty})/\K)$ then the elements $\xi_{\alpha,\sigma}(P^{\sigma}_n)^{\tau-1}$ are norms of elliptic units in $\K(\mathfrak{h}\p^{n})$, where $\mathfrak{h}$ is a prime having Artin symbol $\tau^{-1}$ in $\Gal(\K(\p^{\infty})/\K)$. It follows that the element $\xi_{\alpha,\sigma}(P^{\sigma}_n)^{\tau-1}$ corresponds to the measure $\nu_{\alpha}(\tau-1)\nu(1)$ under $\iota(1)$. The group $\overline{C_{(1)}}$ is generated by products $\prod_{i=1}^s\xi_{\alpha_i,\sigma}(P^{\sigma}_n)^{m_i}$ with $\sum m_i(N\alpha_i-1)=0$. Let $\nu_{\pi}$ be the measure corresponding to such a product. Then we obtain $(\tau-1)\nu_{\pi}=\sum m_i\mu_{\alpha_i}(\tau-1)\nu(1)$. As $(\tau-1)\nu(1)$ is not contained in the augmentation of $\Lambda(\mathcal{D}_{\p},\Gal(\K(\p^{\infty})/\K))$ we obtain that the ideal generated by the $\sum m_i\mu_{\alpha_i}$ is the augmentation ideal and that $\iota((1))(\overline{C_{(1)}})=\mathcal{A}\nu((1))$, where $\mathcal{A}$ denotes the augmentation of $\Lambda(\mathcal{D}_{\p},\Gal(\K(\p^{\infty})/\K))$. Analogously to the case $\g\neq (1)$ we can conclude that $\chi\circ\iota((1))\circ N_{\f,(1)} (\overline{C(\f)})$ has finite index in $\chi\circ\iota((1)) (\overline{C_{(1)}})$. Hence, it suffices to consider the image $\chi\circ\iota((1))(\overline{C_{(1)}})$. 
If $\chi$ is a non-trivial character, then $\chi(\mathcal{A})$ contains $\chi(\tau)-1$ as well as $\gamma'-1$. Thus $\chi\circ \iota((1))(\overline{C_{(1)}})\sim \chi(\nu(1))$. If $\chi$ is the trivial character then $\chi\circ \iota((1))(\overline{C_{(1)}})=(\gamma'-1)\chi(\nu(1))$. 
\end{proof}

\begin{corollary}
\label{cor:lfunction}
Let $F(w,\chi)$ be the Iwasawa function associated to $L_{\p}(s,\chi)$ defined in Definition \ref{padicdef}. Then
$\textup{Char}((U_{\infty}/\overline{C})_{\chi})=F(w,\chi)$.
\end{corollary}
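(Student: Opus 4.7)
The plan is to combine Lemma \ref{iotapseudo} with the proposition immediately preceding, then to recognise the resulting generator as the Iwasawa power series of Definition \ref{padicdef}.

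First, since $\iota(\f)$ is a pseudo-isomorphism from $U_\infty\widehat{\otimes}\mathcal{D}_\p$ onto $\Lambda(\mathcal{D}_\p,\Gamma'\times H)$, quotienting by $\overline{C(\f)}$ and projecting onto the $\chi$-component yields a pseudo-isomorphism
\[
(U_\infty/\overline{C})_\chi\widehat{\otimes}\mathcal{D}_\p
\;\sim\;
\chi\bigl(\Lambda(\mathcal{D}_\p,\Gamma'\times H)\bigr)
\big/\chi\circ\iota(\f)\bigl(\overline{C(\f)}\bigr).
\]
By the preceding proposition, the denominator is pseudo-isomorphic to the principal ideal generated by $\chi(\nu(\g_\chi))$ when $\chi\neq 1$, and by $(\gamma'-1)\chi(\nu(1))$ when $\chi=1$. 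Since pseudo-isomorphisms preserve characteristic ideals, this identifies $\textup{Char}((U_\infty/\overline{C})_\chi)$ with the principal ideal generated by the corresponding element of $\Lambda_\chi$.

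Second, I would identify this generator with $F(w,\chi)$. Under the canonical isomorphism $\chi(\Lambda(\mathcal{D}_\p,\Gamma'\times H))\cong \mathcal{D}_\p[[w]]$ with $w=\kappa(\gamma')-1$, the image of $\chi(\nu(\g_\chi))$ is, by the Mahler correspondence underlying the definition of $\iota$, the unique power series whose value at $w=\kappa(\gamma')^s-1$ equals
\[
\int_{\Gal(\K(\g_\chi\p^\infty)/\K)}\chi^{-1}\kappa^s\,d\nu(\g_\chi)=L_\p(s,\chi).
\]
By Definition \ref{padicdef} this is precisely $F(w,\chi)$. In the trivial-character case the extra factor $(\gamma'-1)$ produced by the proposition matches the factor $(1-\gamma)$ appearing in Definition \ref{padicdef} when passing from the pseudo-measure $\nu(1)$ to a genuine measure, so the identification is uniform in $\chi$.

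The main technical obstacle is the book-keeping in the second step. Definition \ref{padicdef} integrates $\chi^{-1}\kappa^s$ while the proposition outputs $\chi(\nu(\g_\chi))$, so one must be careful about which character appears where and about the direction in which the Mahler isomorphism is applied. One also has to justify that the pseudo-isomorphism of $\mathcal{D}_\p$-modules descends to an equality of characteristic ideals in $\Lambda_\chi$; here the vanishing of the $\mu$-invariant for $U_\infty/\overline{C}$, which follows from Theorem \ref{mu1}, Corollary \ref{mu_2} and the divisibility \eqref{teiler}, rules out any residual $2$-power ambiguity and closes the argument.
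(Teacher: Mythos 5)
Your proof takes essentially the same route as the paper: reduce via Lemma \ref{iotapseudo} to the measure algebra, use the preceding proposition to identify $\chi\circ\iota(\f)(\overline{C(\f)})$ with the principal ideal generated by $\chi(\nu(\g_\chi))$ (with the $(\gamma'-1)$ correction for $\chi=1$), and then read off the Iwasawa series of Definition \ref{padicdef} via the integration formula. This is precisely the paper's argument, so the first two paragraphs of your proposal are fine.

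Your closing remark, however, is flawed. The concern about a ``residual $2$-power ambiguity'' is spurious: a pseudo-isomorphism of $\Lambda$-modules preserves characteristic ideals on the nose (the finite kernel and cokernel contribute neither to $\mu$ nor to $\lambda$), and the passage between $\mathcal{D}_\p[[w]]$-ideals and $\Lambda_\chi$-ideals is handled by flatness of $\mathcal{D}_\p$ over $\Z_2(\chi)$, not by appealing to vanishing of a $\mu$-invariant. More seriously, the justification you offer for $\mu(U_\infty/\overline{C})=0$ does not work: \eqref{teiler} reads $\textup{Char}(X)\mid\textup{Char}(U_\infty/\overline{C})$, so Theorem \ref{mu1} (which gives $\mu(X)=0$) bounds $\mu(U_\infty/\overline{C})$ only from \emph{below}, not from above. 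In the paper the vanishing $\mu(U_\infty/\overline{C})=0$, equivalently $\mu(F)=0$, is established only afterwards in Theorem \ref{sameinvariants}, and the present corollary is part of the chain of results feeding into it, so it cannot be presupposed here. You should simply delete the final sentence; the proof is complete without it.
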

\begin{proof}
Let $\g$ be such that the conductor of $\chi$ is $\g\p$ or $\g$.
By Lemma \ref{iotapseudo} we see that $\textup{Char}(U_{\infty}/\overline{C})$ equals $\textup{Char}(\Lambda(\mathcal{D}_{\p},\Gal(\K(\p^{\infty})/\K))/\iota(\f)(C(\f)))$. But the latter equals $\chi(\nu(\g))$ if $\chi$ is non-trivial and $(1-\gamma)\chi(\nu(1))$ if $\chi$ is trivial. But these are precisely the measures used to define $L_{\p}(s,\chi^{-1})$. As $\int_{\mathcal{G}} \kappa^s\chi d (1-\gamma)^e\nu(\g)=\int_{\Gamma'}\kappa^sd(1-\gamma)^e \chi(\nu(\g))$, where $e=1$ if $\chi$ is trivial and $e=0$ in all other cases, the claim follows.
\end{proof}

\subsection{Matching the invariants}
In the following we will show how the $\lambda$-and $\mu$-invariants of $F(w,\chi)$ match with the ones of $X$. This section follows closely Section 4 of \cite{mu}. 
Recall that $\L_n=\K(\f\p^{n+2})$.
To start with we need the following result from  \cite{mu}. Let $t$ be such that $\K_t=\F\cap \K_{\infty}$.

\begin{corollary}\label{shalit28} If $G \in \Z_p[[\Gamma']]$ is a characteristic power series for $\Gal(\M(\L_{\infty})/\L_{\infty})$, then for all sufficiently large $n$ one has
\[ \mu(G) 2^{t+n-1}+\lambda(G)=1+\ord_2 \left[ \frac{h\left(\L_{n}\right) R_{\p}\left(\L_{n}\right)}{\omega\left(\L_{n}\right) \sqrt{\Delta_{\p}\left(\L_{n}/\K\right)}}\mathlarger{\mathlarger{/}}\frac{h\left(\L_{n-1}\right) R_{\p}\left(\L_{n-1}\right)}{\omega\left(\L_{n-1}\right) \sqrt{\Delta_{\p}\left(\L_{n-1}/\K\right)}}\right].\]
\end{corollary}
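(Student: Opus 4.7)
The proof I would give proceeds in three steps, each feeding the next.

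First, I would apply the Iwasawa structure theorem to the $\Z_2[[\Gamma']]$-module $X$. If $G$ is a characteristic power series of $X$, then for all sufficiently large $N$ one has
\[
\log_2 \bigl|X/(\gamma'^{2^{N}}-1)X\bigr| = \mu(G)\cdot 2^{N} + \lambda(G)\cdot N + c
\]
for a constant $c$ independent of $N$. Taking the first difference in $N$ kills $c$ and gives
\[
\log_2\bigl|X/(\gamma'^{2^{N}}-1)X\bigr| - \log_2\bigl|X/(\gamma'^{2^{N-1}}-1)X\bigr| = \mu(G)\cdot 2^{N-1} + \lambda(G).
\]

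Second, I would match the level $N$ of the $\Gamma'$-tower with the level $n$ of the $\L_n$-tower. Because $\K_t = \F \cap \K_{\infty}$ and because the chosen decomposition realizes $\Gamma'$ as a complement of $H$ in $\Gal(\L_{\infty}/\K)$, the field $\L_n$ is, for $n$ large, the fixed field of $\gamma'^{2^{t+n}}$. Setting $N = t+n$, global class field theory identifies $X/(\gamma'^{2^{t+n}}-1)X$ with $\Gal(\M(\L_n)/\L_n)$ up to kernels and cokernels whose orders are uniformly bounded in $n$; the bound uses Leopoldt's conjecture for $\L_n/\K$ (available since $\L_n$ is abelian over the imaginary quadratic field $\K$, cf. Lemma \ref{leo}). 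These uniformly bounded terms contribute nothing after taking first differences.

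Third, I would invoke the $\p$-adic analytic class number formula (in the form available for abelian extensions of $\K$) to evaluate $|\Gal(\M(\L_n)/\L_n)|$ explicitly. Via the class-field-theoretic exact sequence
\[
0 \longrightarrow U_n^{1}/\overline{E}_n \longrightarrow \Gal(\M(\L_n)/\L_n) \longrightarrow A_n \longrightarrow 0,
\]
the order decomposes as $h(\L_n)\cdot [U_n^{1}:\overline{E}_n]$. Computing the local-unit index through the $\p$-adic logarithm and Leopoldt's $\p$-adic regulator produces, up to an explicit normalising factor,
\[
\frac{h(\L_n)\,R_{\p}(\L_n)}{\omega(\L_n)\sqrt{\Delta_{\p}(\L_n/\K)}}.
\]
Taking the first difference at consecutive levels, the normalising factor also drops out up to an additive contribution of $1$ (stable in $n$), coming from the local Euler factor at $\p$ and from the careful count of $2$-power roots of unity. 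Equating the structure-theorem expression with the analytic one yields the formula.

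The main obstacle is the precise value of the additive constant $+1$ at $p=2$. Odd-prime arguments of Coates and Schneider have to be reworked because of the extra $2$-torsion in both $\mu_{2^{\infty}}$ and in the local units at primes above $\p$; verifying that the constant is exactly $1$ (and not some larger integer depending on $n$) for all sufficiently large $n$ is where the delicate bookkeeping lives and is the only step that genuinely fails to reduce to the existing literature.
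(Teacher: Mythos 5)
The paper does not actually supply a proof of this corollary: it is quoted verbatim, with the sentence ``we need the following result from \cite{mu}'', and the label \texttt{shalit28} signals that it is the $p=2$ adaptation (carried out in the Cri\c{s}an--M\"uller paper) of de Shalit's Chapter III, Lemma 2.8. So there is no in-paper argument to compare against; the relevant reference proof is in \cite{mu} and follows the de Shalit strategy. Your three-step outline (Iwasawa structure theorem and coinvariant counting, matching the $\Gamma'$-level to the $\L_n$-level, and then the $\p$-adic class number formula followed by first differences) is exactly that strategy in outline, so at the level of approach you are on the right track.

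However, there are two genuine gaps. First, the exact sequence you write down, $0\to U_n^1/\overline{E}_n\to\Gal(\M(\L_n)/\L_n)\to A_n\to 0$, is used as if $\Gal(\M(\L_n)/\L_n)$ were finite and its order could be computed by multiplying $h(\L_n)$ with a unit index. But this Galois group is \emph{not} finite: it contains $\Gal(\L_\infty/\L_n)\cong\Z_2$, and correspondingly $U_n^1/\overline{E}_n$ has $\Z_2$-rank $1$ (even under Leopoldt's conjecture). The correct statement involves the coinvariants $X/(\gamma'^{2^N}-1)X$ of the $\Lambda$-torsion module $X$, together with a careful accounting of the free rank, and the $R_\p(\L_n)$ term in the formula is precisely the $\p$-adic regulator valuation that replaces ``order of a finite group''; your version conflates the two. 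Second, and more fundamentally, you explicitly leave the additive constant ``$+1$'' unverified, calling it ``the only step that genuinely fails to reduce to the existing literature''. But that constant is the whole quantitative content of the corollary beyond the qualitative structure theorem: the paper uses this formula to prove Theorem \ref{sameinvariants} (the exact equality $\mu(G)=\mu(F)$, $\lambda(G)=\lambda(F)$), and any ambiguity in that additive constant would destroy the conclusion. Establishing it requires the careful $p=2$ bookkeeping of local units, $2$-power roots of unity, and the ramification jump $\L_n=\K(\f\p^{n+2})$ relative to $\K(\f\p)$ --- the very step you set aside --- so the proposal cannot be regarded as a proof as it stands.
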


 Note that $\mathcal{D}_{\p}[[\Gamma']] \cong \mathcal{D}_{\p}[[w]]$. Consider any character $\rho$ of $\Gamma'$ of finite order. We say level $(\rho)=m$ if $\rho\left(\left(\Gamma'\right)^{2^m}\right)=1$, but $\rho\left(\left(\Gamma'\right)^{2^{m-1}}\right) \neq 1$.
 
 To determine the invariants of the Iwasawa function $F(w,\chi)$ we need the following two results (\cite[Chapter III, Lemma 2.9]{Shalit} and \cite[Chapter III, Proposition 2.10]{Shalit}). 

\begin{lemma}\label{shalit29} For any power series $F \in \mathcal{D}_{\p}[[w]]$ and all sufficiently large $n$, one has
\[ \mu\left(F\right) 2^{n+t-1}+\lambda(F)=\mathrm{ord}_2 \left\lbrace \prod\limits_{\mathrm{level}(\rho)=t+n} \rho(F) \right\rbrace,\]
where $\rho(F)$ means that the action of $\rho$ is extended to $\mathcal{D}_{\p}[[\Gamma']]$ by linearity and $\mathrm{ord}_p$ is the valuation on $\C_p$ normalized by taking $\ord_2(2)=1$.
\end{lemma}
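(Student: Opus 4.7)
The plan is to prove this via Weierstrass preparation. First I would write $F(w) = 2^{\mu(F)} U(w) P(w)$, where $U \in \mathcal{D}_{\p}[[w]]^{\times}$ is a unit power series and $P \in \mathcal{D}_{\p}[w]$ is a distinguished polynomial of degree $\lambda(F)$; this is possible because $\mathcal{D}_{\p}$ is a complete discrete valuation ring (being a finite unramified extension of $\I_{\p}(\zeta_m)$), so the Weierstrass preparation theorem applies to $\mathcal{D}_{\p}[[w]]$.

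Next I would parametrize the characters. A character $\rho$ of $\Gamma'$ with $\mathrm{level}(\rho) = t+n$ is determined by the value $\rho(\gamma') = \zeta$, where $\zeta$ ranges over the primitive $2^{t+n}$-th roots of unity in $\overline{\Q}_2$; there are $\phi(2^{t+n}) = 2^{t+n-1}$ such characters. Under the identification $w = \gamma' - 1$, we have $\rho(F) = F(\zeta - 1) = 2^{\mu(F)} U(\zeta-1) P(\zeta-1)$. Since $\zeta$ is a primitive $2^{t+n}$-th root of unity, $\ord_2(\zeta-1) = 1/2^{t+n-1}$, which lies in the maximal ideal of the ring of integers of $\Q_2(\zeta)$; since $U(0)$ is a unit of $\mathcal{D}_{\p}$ and all other coefficients of $U$ have $\ord_2 \ge 0$, it follows that $U(\zeta-1)$ is a unit, i.e. $\ord_2 U(\zeta-1) = 0$.

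For the polynomial factor, write $P(w) = \prod_{j=1}^{\lambda(F)}(w - \alpha_j)$ with roots $\alpha_j \in \overline{\Q}_2$ satisfying $\ord_2(\alpha_j) > 0$ (this is the defining property of a distinguished polynomial). Choose $n_0$ large enough that $1/2^{t+n_0-1} < \min_j \ord_2(\alpha_j)$. Then for all $n \geq n_0$ and every primitive $2^{t+n}$-th root of unity $\zeta$ we have $\ord_2(\zeta-1) < \ord_2(\alpha_j)$, so by the strict triangle inequality $\ord_2(\zeta - 1 - \alpha_j) = \ord_2(\zeta-1) = 1/2^{t+n-1}$. Therefore
\[\ord_2 P(\zeta-1) = \frac{\lambda(F)}{2^{t+n-1}}.\]

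Finally I would assemble the product:
\[\ord_2 \!\!\prod_{\mathrm{level}(\rho)=t+n}\!\! \rho(F) = \sum_{\zeta} \bigl(\mu(F) + \ord_2 U(\zeta-1) + \ord_2 P(\zeta-1)\bigr) = \phi(2^{t+n})\mu(F) + \phi(2^{t+n})\cdot \frac{\lambda(F)}{2^{t+n-1}} = \mu(F)\,2^{t+n-1} + \lambda(F),\]
which is the required identity. The main (mild) obstacle is ensuring the two valuation comparisons hold simultaneously for all primitive roots and all roots $\alpha_j$, but this is handled uniformly by taking $n$ sufficiently large.
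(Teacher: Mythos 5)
Your argument is correct and is the standard Weierstrass-preparation proof of this estimate. Note, however, that the paper does not prove this lemma at all: it is stated as a citation to de Shalit's book, \cite[Chapter III, Lemma 2.9]{Shalit}, so there is no in-paper proof to compare against; your argument is essentially what the cited reference contains.

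Two small points worth tightening. First, the parenthetical justification that $\mathcal{D}_{\p}$ is a complete DVR ``being a finite unramified extension of $\I_{\p}(\zeta_m)$'' is circular as written, since $\mathcal{D}_{\p}=\I_{\p}(\zeta_m)$ by definition; what you want is that $\mathcal{D}_{\p}$ is a finite extension of $\I_{\p}$, the ring of integers of the (completion of the) maximal unramified extension of $\F_v$, hence a complete DVR. Second, writing $F=2^{\mu(F)}UP$ tacitly assumes $2^{\mu(F)}\in\mathcal{D}_{\p}$, which fails if $\mathcal{D}_{\p}/\Z_2$ is ramified (this can happen when $m=|H|$ is even) and $\mu(F)$ is a non-integral rational. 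The fix is cosmetic: write $F=cUP$ with $c\in\mathcal{D}_{\p}$ a uniformizer power, $U$ a unit series, $P$ distinguished of degree $\lambda(F)$, and set $\mu(F)=\ord_2(c)$; the remainder of your computation---$\ord_2 U(\zeta-1)=0$, $\ord_2(\zeta-1)=1/2^{t+n-1}$, the strict-triangle comparison with the roots $\alpha_j$ for $n$ large, and the final summation over the $\phi(2^{t+n})=2^{t+n-1}$ characters of level $t+n$---goes through verbatim and gives the stated equality.
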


\begin{proposition}\label{shalit210} For any ramified character $\eps$ of $\Gal(\F_{\infty}/\K)$, we let $\g$ be the conductor of $\eps$ and $g$ the least positive integer in $\g \cap \Z$. We define $G(\eps)$ as in Theorem \ref{shalit412} and we define $S_p(\eps)$ by
\[ S_2(\eps)=-\frac{1}{12g\omega_{\g}} \sum\limits_{\sigma \in \Gal(\K(\g)/\K)} \eps^{-1}(\sigma)\log \varphi_{\g}(\sigma).\] 
Let $A_n$ be the collection of all $\eps$ for which $n$ is the exact power of $\p$ dividing their Artin conductor. Then for all sufficiently large $n$ one has
\[
\ord_2\left(\prod\limits_{\eps \in A_{n+2}} G(\eps) S_p(\eps)\right)=\ord_2 \left[ \frac{h\left(\L_n\right) R_{\p}\left(\L_n\right)}{\omega\left(\L_n\right) \sqrt{\Delta_{\p}\left(\L_n/\K\right)}}\mathlarger{\mathlarger{/}}\frac{h\left(\L_{n-1}\right) R_{\p}\left(\L_{n-1}\right)}{\omega\left(\L_{n-1}\right) \sqrt{\Delta_{\p}\left(\L_{n-1}/\K\right)}}\right].\]
\end{proposition}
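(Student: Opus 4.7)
The right-hand side is the quotient of the ``class-number-regulator'' term at levels $n$ and $n-1$, normalized by the conductor/disciminant and the $\p$-adic root-number of $\L_n/\K$. My plan is to rewrite this quantity via the $\p$-adic analytic class number formula and then identify each factor on the analytic side with one of the products $G(\eps)S_p(\eps)$ appearing on the left.

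First, I would apply the $\p$-adic class number formula (the imaginary-quadratic analogue of Colmez--Gross, in the form proved in de Shalit, Chapter III \S 2) to both $\L_n$ and $\L_{n-1}$. This asserts that
\[
\frac{h(\L_n) R_\p(\L_n)}{\omega(\L_n)\sqrt{\Delta_\p(\L_n/\K)}} \;\doteq\; \prod_{\eps} L_\p(0,\eps^{-1}),
\]
where the product runs over all non-trivial characters $\eps$ of $\Gal(\L_n/\K)$ and $\doteq$ denotes equality up to a $2$-adic unit. Taking the quotient with the analogous identity for $\L_{n-1}$, the characters that survive are exactly those of $\Gal(\L_n/\K)$ that are non-trivial on $\Gal(\L_n/\L_{n-1})$, i.e.\ the characters of exact $\p$-part of the conductor equal to $\p^{n+2}$; since $\L_n=\K(\f\p^{n+2})$ and the tame part of the conductor divides $\f$, this set is precisely $A_{n+2}$. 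Consequently,
\[
\ord_2\!\left[\frac{h(\L_n) R_\p(\L_n)}{\omega(\L_n)\sqrt{\Delta_\p(\L_n/\K)}}\mathlarger{\mathlarger{/}}\frac{h(\L_{n-1}) R_\p(\L_{n-1})}{\omega(\L_{n-1})\sqrt{\Delta_\p(\L_{n-1}/\K)}}\right]
=\ord_2\!\left(\prod_{\eps\in A_{n+2}} L_\p(0,\eps^{-1})\right).
\]

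Second, I would evaluate each factor $L_\p(0,\eps^{-1})$ using the interpolation property of $\nu(\g_\eps)$, extended to $s=0$ by the same construction used to define $L_\p(s,\chi)$ in Definition \ref{padicdef}. The formula of Theorem \ref{shalit412} is stated for $k\ge 1$, but its specialization to $k=0$ via the Kummer-congruence argument of de Shalit (Chapter II \S\S 4--5) expresses $L_\p(0,\eps^{-1})$ as the product of an explicit Gauss-type sum (matching the definition of $G(\eps)$ given before Theorem \ref{shalit412}), the $\p$-adic logarithmic term arising from differentiating $\kappa^s$ at $s=0$ (matching $S_2(\eps)=-\frac{1}{12g\omega_\g}\sum_\sigma \eps^{-1}(\sigma)\log\varphi_\g(\sigma)$), together with local Euler factors at $\p$ and at the primes dividing $\f/\g$. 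Since $\eps$ is a ramified character of $\p$-power order twisted by a character of order dividing $|H|$, these local Euler factors are $2$-adic units, and so $\ord_2 L_\p(0,\eps^{-1}) = \ord_2(G(\eps) S_2(\eps))$ for each $\eps \in A_{n+2}$. Taking the product over $\eps\in A_{n+2}$ gives the claimed equality.

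\textbf{Main obstacle.} The delicate point is the $p=2$ specialization of the interpolation formula at $s=0$ and the comparison of the explicit logarithmic derivative of $\tilde g_u\circ \beta^v$ with the sum $\sum_\sigma \eps^{-1}(\sigma)\log\varphi_\g(\sigma)$; in particular the factor $\tfrac{1}{12g\omega_\g}$ arises from the $12$-th root occurring in the definition of $c_\sigma(\alpha)$ (and from the normalization of the Neron differential $\omega$), and one has to verify that all the auxiliary units introduced when passing from $\nu_\alpha$ to $\nu$ (the factor $(N\alpha-\sigma_\alpha)^{-1}$ of Theorem \ref{shalit412}) do not alter the $2$-adic valuation. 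This is exactly the step where de Shalit's proof for $p\ne 2$ has to be carefully re-examined, and where Lemma \ref{iotapseudo} together with the vanishing of the $\mu$-invariant (Theorem \ref{mu1}) are implicitly used to guarantee convergence and non-degeneracy.
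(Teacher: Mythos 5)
The paper does not actually prove Proposition~\ref{shalit210}; it simply cites it from de Shalit's book (\cite[Chapter III, Proposition 2.10]{Shalit}), implicitly relying on the companion work \cite{mu} for the $p=2$ adjustments. So there is no ``paper's proof'' to match against; you are attempting to reconstruct de Shalit's argument, and the reconstruction has a serious structural problem.

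Your step~1 invokes a ``$\p$-adic analytic class number formula'' relating $h R_\p / (\omega\sqrt{\Delta_\p})$ to a product of $L_\p(0,\eps^{-1})$, and your step~2 converts $L_\p(0,\eps^{-1})$ into $G(\eps)S_2(\eps)$. But for imaginary quadratic fields, the $\p$-adic class number formula is not a black-box input available independently of the proposition you are trying to prove: in de Shalit's development it is precisely the \emph{conjunction} of Proposition III.2.10 with the $\p$-adic Kronecker limit formula (de Shalit, Chapter II.5, and \cite[Theorem 5]{mu} in the present paper's notation). Assuming step~1 and then applying step~2 is therefore circular. Your description of step~2 as ``Kummer congruences'' also understates the difficulty: the formula $L_\p(0,\eps^{-1}) \doteq G(\eps)S_2(\eps)$ is the $\p$-adic Kronecker limit formula itself, an independent and substantial result, not an elementary extension of Theorem~\ref{shalit412} from $k\ge 1$ to $k=0$.

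De Shalit's actual argument for Proposition III.2.10 avoids the $\p$-adic $L$-function entirely. The discriminant ratio is handled by the conductor--discriminant formula, which produces the Gauss sums $G(\eps)$ character by character; the $\p$-adic regulator ratio is expanded via the Frobenius/Dedekind determinant formula into a product over characters of the sums $\sum_\sigma \eps^{-1}(\sigma)\log_\p\varphi_\g(\sigma)$ appearing in $S_2(\eps)$; and the class number $h$ enters through Kersey/Robert's index theorem $[E:C] \doteq h$, which is an application of the \emph{complex} Kronecker limit formula, not the $\p$-adic one. These three pieces combine to give the proposition directly, and the role of Proposition~\ref{shalit210} in the paper is precisely to be the algebraic half that is then combined with \cite[Theorem 5]{mu} to yield \eqref{dsdsasdasa}. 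Finally, Lemma~\ref{iotapseudo} and Theorem~\ref{mu1}, which you invoke at the end of your ``main obstacle'' paragraph, play no role here; they belong to the characteristic-ideal comparison in Section~4 and are not ingredients of de Shalit's Proposition III.2.10.
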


Using \cite[Theorem 5]{mu}, for a character $\rho$ of $\Gamma'$ of sufficiently large finite order, one has
\[ \rho(F(w,\chi^{-1})) \sim \begin{cases} G(\chi\rho)S_2(\chi\rho) &\mbox{if } \chi \neq 1;\\
\left(\rho(\gamma_0)-1\right)G(\chi\rho)S_2(\chi\rho) &\mbox{if } \chi=1,\end{cases}\]
where $u \sim v$ denotes the fact that $u/v$ is a $\p$-adic unit.
Let 
\[F=\prod\limits_{\chi \in \widehat{H}} F(w,\chi).\]
It follows that for all sufficiently large $n$ one has
\begin{eqnarray}\label{dsdsasdasa}
\prod\limits_{\mathrm{level}(\rho)=t+n} \rho(F) \sim 2\prod\limits_{\stackrel{\eps=\chi \rho}{\mathrm{level}(\rho)=t+n}} G(\eps)S_p(\eps),
\end{eqnarray}
since in the product on the right hand side we range over all $\chi$ (including $\chi=1$) and 
\[\prod\limits_{\mathrm{level}(\rho)=t+n} \left(\rho(\gamma_0)-1\right)=2.\]

As $\L_0/\F$ is ramified at $\p$ of degree $2$ and $\p$ is unramified in $\F/\K$ we see that a character in $A_{n+2}$ is of level $t+n$.
Combining Corollary \ref{shalit28}, Lemma \ref{shalit29} and \eqref{dsdsasdasa} we obtain that
\[\mu\left(F\right) 2^{n+t-1}+\lambda(F)=\mu(G) 2^{t+n-1}+\lambda(G)\quad \forall n\gg0\]
This implies together with Theorem \ref{mu1}
\begin{theorem}
\label{sameinvariants}
$\mu(G)=\mu(F)=0$ and $\lambda(G)=\lambda(F)$.
\end{theorem}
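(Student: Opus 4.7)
My plan is to extract the two invariants one at a time from the asymptotic identity already stated just before the theorem, then use the structural result Theorem~\ref{mu1} to pin the common $\mu$-invariant to zero.

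The starting point is the identity
\[
\mu(F)\,2^{n+t-1}+\lambda(F)=\mu(G)\,2^{t+n-1}+\lambda(G)\quad\text{for all }n\gg 0,
\]
which has already been obtained by combining Corollary~\ref{shalit28}, Lemma~\ref{shalit29} and equation~\eqref{dsdsasdasa}. Since this equality holds for infinitely many values of $n$, I would compare leading and constant terms: divide both sides by $2^{n+t-1}$ and send $n\to\infty$ to force $\mu(F)=\mu(G)$, and then subtract the common exponential term to read off $\lambda(F)=\lambda(G)$. No delicate estimate is needed here; this is a one-line consequence of the fact that two affine functions of $2^{n+t-1}$ that agree for all large $n$ must have the same coefficients.

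It remains to show that this common $\mu$ vanishes. By Theorem~\ref{mu1}, the Galois group $X=\Gal(\Omega/\L_\infty)$ is finitely generated as a $\Z_2$-module, hence its characteristic power series $G$ has $\mu$-invariant zero. Plugging this back into the identity above gives $\mu(F)=0$ as well, which completes the proof.

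The entire argument is essentially bookkeeping built on the deep inputs already assembled. The only potentially subtle point is making sure the definition of $F$ as $\prod_{\chi\in\widehat H}F(w,\chi)$ plays well with the normalisation of levels in Lemma~\ref{shalit29} and with the extra factor of $2$ appearing in~\eqref{dsdsasdasa} from the $\chi=1$ component; but this was precisely the reason for including that factor in \eqref{dsdsasdasa}, so once the identity is written down there is no further obstacle. The main substantive ingredient really is Theorem~\ref{mu1}: without the vanishing of the $\mu$-invariant of $X$, the equality of $\lambda$'s would be meaningless because one could trade a $\mu$ for a large $\lambda$. The remark after Theorem~\ref{divides11} confirms that this $\mu=0$ input is indispensable throughout the article's strategy.
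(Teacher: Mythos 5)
Your argument is correct and follows the paper's reasoning exactly: compare coefficients in the asymptotic identity $\mu(F)2^{n+t-1}+\lambda(F)=\mu(G)2^{t+n-1}+\lambda(G)$ to get $\mu(F)=\mu(G)$ and $\lambda(F)=\lambda(G)$, then invoke Theorem~\ref{mu1} (that $X$ is finitely generated over $\Z_2$) to conclude $\mu(G)=0$ and hence $\mu(F)=0$. One small correction to your closing commentary: the equality $\lambda(F)=\lambda(G)$ already follows unconditionally from comparing affine functions, independent of whether $\mu=0$; Theorem~\ref{mu1} is needed only for the additional claim that the common $\mu$-invariant vanishes, not to make sense of the $\lambda$-equality.
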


\subsection{Proving the main conjecture}
In this section we use all the results proved before to prove the main conjecture.

\begin{lemma}
\label{main2}
$\textup{Char}(X)=\textup{Char}(U_{\infty}/\overline{C})$ and $\textup{Char}(A_{\infty,\chi})=(\overline{E}/\overline{C})_{\chi}$.
\end{lemma}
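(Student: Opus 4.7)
The plan is to combine the one-sided divisibility implicit in \eqref{teiler} with the invariant-matching of Theorem \ref{sameinvariants}, forcing termwise equality of characteristic ideals, and then to feed the resulting equality back through the exact sequence \eqref{chareq}.

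First I would record, for each character $\chi$ of $H$, that by \eqref{teiler} one has $\textup{Char}(X_{\chi}) \mid \textup{Char}((U_{\infty}/\overline{C})_{\chi})$, and by Corollary \ref{cor:lfunction} the right-hand side coincides with the Iwasawa function $F(w,\chi)$. Thus each $\textup{Char}(X_{\chi})$ is a divisor of $F(w,\chi)$ in $\Lambda_{\chi}\cong \Z_2(\chi)[[T]]$, so one has the termwise inequalities $\mu(\textup{Char}(X_{\chi})) \le \mu(F(w,\chi))$ and $\lambda(\textup{Char}(X_{\chi})) \le \lambda(F(w,\chi))$.

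Next, I would invoke Theorem \ref{sameinvariants}, which says $\mu(F) = \mu(G) = 0$ and $\lambda(F) = \lambda(G)$, where $F = \prod_{\chi} F(w,\chi)$ and $G$ is a characteristic power series of $X$ viewed over $\Z_2[[\Gamma']]$. Since $\mu$ and $\lambda$ are additive under multiplication and nonnegative, the identities $\mu(F) = \sum_{\chi}\mu(F(w,\chi)) = 0$ and $\lambda(G) = \sum_{\chi}\lambda(\textup{Char}(X_{\chi}))$ together with the divisor relation force $\mu(\textup{Char}(X_{\chi})) = \mu(F(w,\chi)) = 0$ and $\lambda(\textup{Char}(X_{\chi})) = \lambda(F(w,\chi))$ for every $\chi$. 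A divisor in $\Lambda_{\chi}$ of a distinguished polynomial (modulo units) that shares both $\mu$ and $\lambda$ with the dividend must be an associate of it, so
\[
\textup{Char}(X_{\chi}) \;=\; F(w,\chi) \;=\; \textup{Char}((U_{\infty}/\overline{C})_{\chi}),
\]
and taking the product over $\chi$ gives $\textup{Char}(X) = \textup{Char}(U_{\infty}/\overline{C})$.

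Finally, I would apply the character decomposition of \eqref{chareq}, which remains exact on $\chi$-components because the sequence is $\Lambda$-linear, to obtain
\[
\textup{Char}(A_{\infty,\chi})\cdot \textup{Char}((U_{\infty}/\overline{C})_{\chi}) \;=\; \textup{Char}(X_{\chi})\cdot \textup{Char}((\overline{E}/\overline{C})_{\chi}).
\]
Since the two factors involving $X_{\chi}$ and $(U_{\infty}/\overline{C})_{\chi}$ coincide and are nonzero in the Krull domain $\Lambda_{\chi}$, cancellation yields $\textup{Char}(A_{\infty,\chi}) = \textup{Char}((\overline{E}/\overline{C})_{\chi})$, proving both claims of the lemma. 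The main obstacle I anticipate is bookkeeping: making precise that the $\lambda$-invariant of $G$ decomposes as $\sum_{\chi}\lambda(\textup{Char}(X_{\chi}))$ (which requires identifying $G$ with the product of the $\chi$-components, up to passage from $\Z_2[[\Gamma']]$ to the appropriate extension $\mathcal{D}_{\p}[[\Gamma']]$), and arguing that the vanishing of $\mu$ together with equality of $\lambda$ genuinely upgrades divisibility to equality in $\Lambda_{\chi}$.
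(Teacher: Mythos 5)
Your overall strategy --- divisibility plus matching $\mu$- and $\lambda$-invariants forces equality, then cancel in \eqref{chareq} --- is the one the paper follows, but two steps in your execution overreach.

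You assert that \eqref{teiler} already gives the componentwise divisibility $\textup{Char}(X_{\chi}) \mid \textup{Char}((U_{\infty}/\overline{C})_{\chi})$. But \eqref{teiler} is a statement about the full modules, and when $2\mid|H|$ the passage from a $\Lambda$-ideal to its $\chi$-components is delicate (this is precisely why the paper invokes Tsuji). The paper sidesteps this for the first claim by staying global: $G$ and $F$ are already characteristic power series of the full modules, so \eqref{teiler} together with Theorem \ref{sameinvariants} gives $\textup{Char}(X)=\textup{Char}(U_{\infty}/\overline{C})$ directly, with no $\chi$-decomposition needed.

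More seriously, you claim the sequence underlying \eqref{chareq} ``remains exact on $\chi$-components because the sequence is $\Lambda$-linear.'' It does not: $M\mapsto M_{\chi}=M\otimes_{\Z_2[H]}\Z_2(\chi)$ is only right-exact, so the injection $\overline{E}/\overline{C}\hookrightarrow U_{\infty}/\overline{C}$ need not remain injective after applying it. The paper confronts exactly this difficulty in the proof of Theorem \ref{main}, replacing $M_{\chi}$ by the submodule $M^{\chi}$ and controlling the discrepancy by $|H|$. In Lemma \ref{main2} the paper instead stays global once more: cancelling the first claim in \eqref{chareq} gives $\textup{Char}(A_{\infty})=\textup{Char}(\overline{E}/\overline{C})$, and it is Theorem \ref{divides11} --- which your proposal never invokes --- that supplies the genuine per-$\chi$ divisibility $\textup{Char}(A_{\infty,\chi})\mid\textup{Char}((\overline{E}/\overline{C})_{\chi})$; the global equality then upgrades this termwise divisibility to termwise equality by the same $\mu$/$\lambda$ additivity you correctly use in your first paragraph. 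Without Theorem \ref{divides11} feeding in the Euler-system input, your cancellation has nothing to cancel against.
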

\begin{proof}
The first claim follows directly from \eqref{teiler} and Theorem \ref{sameinvariants}. From \eqref{chareq} we also obtain that $\textup{Char}(A_{\infty})=\textup{Char}(\overline{E}/\overline{C})$. Further Theorem \ref{divides11} establishes that $\textup{Char}(A_{\infty,\chi})$ divides $\textup{Char}(\overline{E}/\overline{C})_{\chi}$. Both togehther imply the second claim.
\end{proof}
This has also the following consequence:
\begin{theorem}
\label{main}
$\textup{Char}(X_{\chi})=\textup{Char}((U_{\infty}/\overline{C})_{\chi})$ for any $\chi$.
\end{theorem}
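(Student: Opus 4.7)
The plan is to obtain a $\chi$-component analogue of the identity \eqref{chareq} directly from the four-term exact sequence, and then cancel using the second statement of Lemma \ref{main2}. I would split the sequence
\[
0\to\overline{E}/\overline{C}\to U_\infty/\overline{C}\to X\to A_\infty\to 0
\]
into two short exact sequences through the intermediate module $U_\infty/\overline{E}$. To each I would apply the right-exact functor $N\mapsto N_\chi=N\otimes_{\Z_2[H]}\Z_2(\chi)$, obtaining the associated $\textup{Tor}$ long exact sequences.

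The arithmetic input needed is that the $\textup{Tor}_1^{\Z_2[H]}(\,\cdot\,,\Z_2(\chi))$ terms appearing in these sequences cannot affect the characteristic ideals of the $\chi$-components. They are annihilated by $|H|$ and hence consist of $2$-power torsion, so they can only contribute to $\mu$-invariants. But the four modules in play all have vanishing $\mu$-invariant on $\chi$-components: $X_\chi$ by Theorem \ref{mu1}, $A_{\infty,\chi}$ by Corollary \ref{mu_2}, $(U_\infty/\overline{C})_\chi$ by Theorem \ref{sameinvariants} combined with Corollary \ref{cor:lfunction}, and consequently $(\overline{E}/\overline{C})_\chi$ via Lemma \ref{main2}. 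Hence the Tor contributions vanish in characteristic ideals, and the usual multiplicative identity from the exact sequence yields
\[
\textup{Char}(A_{\infty,\chi})\,\textup{Char}((U_\infty/\overline{C})_\chi)=\textup{Char}(X_\chi)\,\textup{Char}((\overline{E}/\overline{C})_\chi),
\]
exactly as in the derivation of \eqref{chareq} at the unrestricted level.

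Finally, the second statement of Lemma \ref{main2} supplies $\textup{Char}(A_{\infty,\chi})=\textup{Char}((\overline{E}/\overline{C})_\chi)$, and cancelling this common factor on both sides of the displayed identity gives the desired equality $\textup{Char}(X_\chi)=\textup{Char}((U_\infty/\overline{C})_\chi)$. The principal obstacle is the control of the Tor terms when $p=2$ and $|H|$ is even: for odd residue characteristic or $|H|$ coprime to $p$ the group ring $\Z_p[H]$ is semisimple and the Tor terms vanish automatically, whereas for $p=2$ one must combine the $|H|$-annihilation with the $\mu$-vanishing results above to ensure the contributions drop out. The rest of the argument is a formal cancellation mirroring the proof of the second statement of Lemma \ref{main2}.
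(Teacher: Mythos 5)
Your argument is correct but takes a genuinely different route from the paper. The paper works with the invariants $M^{\chi}$, the largest submodule of $M\otimes_{\Z_p}\Z_p(\chi)$ on which $H$ acts by $\chi$, invoking Tsuji's comparison that the natural map between $M_{\chi}$ and $M^{\chi}$ has kernel and cokernel annihilated by $\vert H\vert$; it then exploits the left-exactness of $M\mapsto M^{\chi}$ on the four-term sequence and analyses the kernel and cokernel of $\phi_{\chi}\colon X^{\chi}\to A_{\infty}^{\chi}$ directly. You stay in the coinvariant world $M_{\chi}$ and let the $\textup{Tor}$ long exact sequences absorb the failure of exactness, with the error terms isolated as $\textup{Tor}_1^{\Z_2[H]}(\,\cdot\,,\Z_2(\chi))$. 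Both routes rest on the same arithmetic input --- $\vert H\vert$-annihilation of the error terms, the $\mu$-vanishing furnished by Theorem \ref{mu1}, Corollary \ref{mu_2}, Theorem \ref{sameinvariants} via Corollary \ref{cor:lfunction}, and the cancellation supplied by Lemma \ref{main2} --- but your version packages the homological bookkeeping into one standard long exact sequence rather than tracking submodule inclusions by hand, which is arguably cleaner. One small clarification is worth making: the $\textup{Tor}$ contributions need not vanish individually; writing the two resulting identities as
\[
\textup{Char}((\overline{E}/\overline{C})_{\chi})\,\textup{Char}((U_{\infty}/\overline{E})_{\chi})=2^{a_1}\,\textup{Char}((U_{\infty}/\overline{C})_{\chi}),\qquad
\textup{Char}((U_{\infty}/\overline{E})_{\chi})\,\textup{Char}(A_{\infty,\chi})=2^{a_2}\,\textup{Char}(X_{\chi}),
\]
the vanishing of the four outer $\mu$-invariants forces $a_1=a_2=\mu((U_{\infty}/\overline{E})_{\chi})$, so the extra $2$-powers cancel when the two identities are combined, yielding \eqref{chareq} on $\chi$-components without any residual factor. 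With that minor adjustment in wording, the cancellation against $\textup{Char}(A_{\infty,\chi})=\textup{Char}((\overline{E}/\overline{C})_{\chi})$ proceeds exactly as you describe.
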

\begin{proof}
For any $\Lambda$-module we denote by $M^{\chi}$ the largest submodule in $M\otimes_{\Z_p}\Z_p(\chi)$ on which $H$ acts via $\chi$. By \cite[page 5]{Tsuji} there exists a homomorphism between $M_{\chi}$ and $M^{\chi}$ such that the kernel and the cokernel are annihilated by $\vert H\vert$.
As none of the characteristic ideals involved is divisible by $2$ we can consider the charactersitc ideals of $M^{\chi}$ instead of $M_{\chi}$ for any  $M$ in $\{A_{\infty}, U_{\infty}/\overline{C}, X, \overline{E}/\overline{C}\}$. 
The sequence  \[0\to (\overline{E}/\overline{C})^{\chi}\to (U_{\infty}/\overline{C})^{\chi}\to X^{\chi}\]
is exact. Let $e_{\chi}$ in $\Q_p(\chi)[H]$ be the idempotent induced by the character $\chi$. Then $e_{\chi}\vert H\vert $ is an element in $\Z_p(\chi)[H]$. In particular, $e_{\chi}\vert H\vert M\subset M^{\chi}$. It follows that the cokernel of the natural homomorphism $\phi_{\chi}:X^{\chi}\to A_{\infty}^{\chi}$ is annihilated by $\vert H\vert$. As $A_{\infty}$ has bounded rank it follows that the $\coker(\phi_{\chi})$ is finite. The module $\ker(\phi_{\chi})$ equals $X^{\chi}\cap im(U_{\infty}/\overline{C})$. Again the exponent of $X^{\chi}\cap im((U_{\infty}/\overline{C}))/im((U_{\infty}/ \overline{C})^{\chi})$ is bounded by $\vert H \vert$. Hence, 
$\textup{Char}(A_{\infty}^{\chi})\textup{Char}(im((U_{\infty}/\overline{C})^{\chi})=\textup{Char}(X^{\chi})$. Using the exactness of the sequence above we obtain \[\textup{Char}(A_{\infty}^{\chi})\textup{Char}((U_{\infty}/\overline{C})^{\chi})=\textup{Char}((\overline{E}/\overline{C})^{\chi})\textup{Char}(X^{\chi}).\] The claim follows now from Lemma \ref{main2}.
\end{proof}

The second claim of Lemma \ref{main2} and  Theorem \ref{main} prove Theorem \ref{mainmain} for $\L_{\infty}$.

\smallskip
\begin{center}

\textbf{Acknowledgements}\end{center}
The author would like to thank S\"oren Kleine for his comments on preliminary versions of this paper.


\begin{thebibliography}{1}


\bibitem[Bl]{BL} Bley, W. (2006) {\em Equivariant Tamagawa Conjecture for Abelian Extensions of a Quadratic Imaginary Field}: Documenta Mathematica 11, pp 73-118

\bibitem[C-K-L]{ChoiKeLi} Choi, J., Kezuka, Y., Li, Y. (2018).  {\em Analogues  of  Iwasawa’s $\mu=0$ conjecture  and  weak  Leopoldt  theorem  for  certain  non-cyclotomic $\Z_2$-extensions}: Asian Journal of Mathematics, Vol. 23, No. 3, pp 383-400

\bibitem[Co]{Co91} Coates, J. (1991). {\em Elliptic curves with complex multiplication and Iwasawa theory}: Bull. London Math. Soc. 23, pp. 321-350.

\bibitem[Co-Go]{CoGo83} Coates, J., Goldstein, C. (1983). {\em Some  remarks  on the main  conjecture  for elliptic curves with complex multiplication}: American J. of Mathematics 105, pp. 337-366.




\bibitem[Col79]{Coleman} Coleman, R. (1979). {\em Division values in local fields}: Invent. Math., 53, pp. 91-116.

\bibitem[Cr-M]{mu} Cri\c san, V., M\"uller, K. {\em The Vanishing of the $\mu$-Invariant for Split Prime $\Z_p$-extensions over Imaginary Quadratic Fields} To appear in The Asian Journal of Mathematics.

\bibitem[dS]{Shalit} de Shalit, E. (1987). {\em The Iwasawa theory of elliptic curves with complex multiplication}: Perspect. Math. Vol.3.







\bibitem[Gr]{greither} Greither, C. (1992). {\em Class groups of abelian fields, and the main conjecture} Annales de L'Institut Fourier 42, no 3, pp 449-499.

\bibitem[Iw]{iwasawa} Iwasawa, Kenkichi (1973) {\em On $\Z_l$ Extensions of Algebraic Number Fields} Annals of Mathematics Second Series, Vol. 98,no.2, pp. 246-326.

\bibitem[Ja]{Janusz} Janusz, G. J. (1973), {\em Algebraic Number Fields} Pure and Applied Mathematics Volume 55, Academic Press.



\bibitem[Ke]{Kezuka-thesis} Kezuka, Y. (2016) {\em On the Main Conjecture of Iwasawa Theory for certain elliptic curves with complex multiplication}, PhD-Thesis,  University of Cambridge, Cambridge.

\bibitem[Ke-2]{Ke2} Kezuka, Y. (2019) {\em On the Main Conjecture of Iwasawa Theory for Certain Non-Cyclotomic $\Z_p$-Extension} J. London Math. Soc., Vol. 100, pp. 107-136.


\bibitem[Lu]{Lubin} Lubin, J. (1964). {\em One Parameter Formal Lie Groups over p-adic Integer Rings}: Annals of Mathematics, Second Series, Vol. 80, No. 3, pp. 464-484.  

\bibitem[Ou]{OK} Oukhaba, H. (2012) {\em On Iwasawa Theory of elliptic units and $2$-class groups} J. Ramanujan Math. Soc. 27, no 3, pp 255-373.
 
\bibitem[O-V]{OV} Oukhaba, H., Vigui\'{e}, S. (2016). {\em On the $\mu$-invariant of Katz $p$-adic $L$-functions attached to imaginary quadratic fields}: Forum Math. 28, no. 3, pp. 507-525.


\bibitem[Ru-1]{Rubin2} Rubin, K. (1988){\em On the main conjecture of Iwasawa theory for imaginary quadratic fields}: Invent. Math. 93, pp 701-7013

\bibitem[Ru-2]{Rubin} Rubin, K. (1991). {\em The "main conjectures" of Iwasawa Theory for imaginary quadratic fields}: Invent. Math., 103, pp. 25-68.

\bibitem[Ru-3]{lang} Rubin, K. (1990). {\em The Main Conjecture, Appendix to the second edition of S. Lang: Cyclotomic Fields I and II} Graduate Texts in Mathematics 121, Springer.




\bibitem[Sil 1]{Silverman} Silverman, J.H. (1986). {\em The Arithmetic of Elliptic Curves}, Graduate Texts in Mathematics 106: Springer.

 

\bibitem[Si]{sinnott} Sinnott, W. (1984). {\em On the $\mu$-invariant of the $\Gamma$-transform of a rational function}: Invent. Math., 75, pp. 273-282.

\bibitem[Ts]{Tsuji} Tsuji, T.  (1999) {\em Semi-Local Units modulo Cyclotomic Units} J. Nubmer Theory, Volume 78, Issue 1, pp 1-26.

\bibitem[Vi]{vg2} Vigui\'{e}, S. (2016) {\em On the classical main conjecture for imaginary quadratic fields} Abstract found in: Pioneer Journal of Algebra, Number Theory and its applications, Volume 12, Issue 1-2, pp. 1-27. See also arxive arXiv:1103.1125 for the complete Preprint.


\bibitem[Wash]{washington} Washington, L.C. (1997) {\em Introduction to cyclotomic fields}, 2nd Edition, Graduate Texts in Mathematics 83: Springer.

\end{thebibliography}
\end{document}